\newlength{\dhatheight}
\theoremstyle{plain}
\newtheorem{theorem}{Theorem}[section]
\newtheorem*{theorem*}{Theorem}
\newtheorem{lemma}[theorem]{Lemma}
\newtheorem{corollary}[theorem]{Corollary}
\newtheorem*{corollary*}{Corollary}
\newtheorem{proposition}[theorem]{Proposition}
\theoremstyle{definition}
\newtheorem{definition}[theorem]{Definition}
\theoremstyle{remark}
\newtheorem{remark}[theorem]{Remark}
 \def\KK{{\mathbb{K}}}
 \def\kk{{\mathbbm{k}}}
 \def\QQ{\mathbb{Q}}
 \newcommand\ZZ{{\mathbb{Z}}}
 \def\C{{\mathcal C}}
 \def\E{{\mathcal E}}
 \def\F{{\mathcal F}}
 \def\G{{\mathcal G}}
 \DeclareMathOperator\Lk{Lk}
 \newcommand{\GF}[1]{\mathbb{F}_{#1}}
 \DeclareMathOperator\SL{SL}
 \DeclareMathOperator{\Sym}{Sym}
 \def\redF{\widehat{\mathcal{F}}}
 \DeclareMathOperator\Rad{Rad}
 \DeclareMathOperator\Id{Id}
 \newcommand{\tq}{\mathrel{{\ensuremath{\: : \: }}}}
 \DeclareMathOperator\Stab{Stab}
 \def\Sp{\mathcal{S}_p}
 \def\groupiso{\cong}
 \newcommand\gen[1]{\left\langle#1\right\rangle}
\renewcommand{\Sp}{\operatorname{Sp}}
\newcommand{\CT}[1]{\circ_{#1}}
\begin{document}

\title{On the frame complex of symplectic spaces}
   %\author{Kevin I. Piterman, Volkmar Welker}
   %\address{Departamento de Matem\'atica \\IMAS-CONICET\\
% FCEyN, Universidad de Buenos Aires. Buenos Aires, Argentina.}
%\email{piterman@mathematik.uni-marburg.de, welker@mathematik.uni-marburg.de}

%\author{Kevin I. Piterman and Volkmar Welker}
\author{Kevin I. Piterman}
\address{Philipps-Universit\"at Marburg \\
Fachbereich Mathematik und Informatik \\
35032 Marburg, Germany}
%\email{kpiterman@dm.uba.ar,  welker@mathematik.uni-marburg.de}
\email{piterman@mathematik.uni-marburg.de}

% \author{Volkmar Welker}
% \address{Philipps-Universit\"at Marburg \\
% Fachbereich Mathematik und Informatik \\
% 35032 Marburg, Germany}
%\email{welker@mathematik.uni-marburg.de}

\begin{abstract}
For a symplectic space $V$ of dimension $2n$ over $\GF{q}$, we compute the eigenvalues of its orthogonality graph.
This is the simple graph with vertices the $2$-dimensional non-degenerate subspaces of $V$ and edges between orthogonal vertices.
As a consequence of Garland's method, we obtain vanishing results on the homology groups of the frame complex of $V$, which is the clique complex of this graph.
We conclude that if $n < q+3$ then the poset of frames of size $\neq 0,n-1$, which is homotopy equivalent to the frame complex, is Cohen-Macaulay over a field of characteristic $0$.
However, we also show that this poset is not Cohen-Macaulay if the dimension is big enough.
\end{abstract}

\subjclass[2020]{05E45, 20J05, 51E24}

\keywords{Symplectic forms, frame complex, Cohen-Macaulay complex, graph spectrum}

\maketitle

\section{Introduction}

In \cite{PW22} we studied posets and complexes arising from finite-dimensional vector spaces equipped with non-degenerate $\epsilon$-Hermitian forms, with particular focus on Hermitian forms.
%Specifically, for a finite-dimensional vector space $V$ equipped with a non-degenerate Hermitian form over a field $\KK$, we described in \cite{PW22} the connectivity of its frame complex and studied its fundamental group.
%Recall that the frame complex of $V$ is the simplicial complex whose simplices are the sets of pairwise orthogonal $1$-dimensional non-degenerate subspaces of $V$.
More specifically, for a finite-dimensional vector space $V$ equipped with a non-degenerate Hermitian form over a field $\KK$, let $\F(V)$ be the frame complex of $V$.
This is the simplicial complex whose simplices are the sets of pairwise orthogonal $1$-dimensional non-degenerate subspaces of $V$.
We described in \cite{PW22} the connectivity of $\F(V)$ and studied its fundamental group.
For finite fields $\KK=\GF{q^2}$, we also computed the eigenvalues of the orthogonality graph, i.e., the $1$-skeleton of the frame complex.
This allowed us to apply Garland's method on the frame complex and establish vanishing results on the homology groups with coefficients in fields of characteristic $0$.
Moreover, it is shown in \cite{PW22} that some of these vanishing results can be propagated to the poset of proper non-degenerate subspaces, the poset of orthogonal decompositions and also to certain $p$-subgroup poset of the underlying unitary group, where $p$ is a prime dividing $q+1$.
In particular, if $\dim(V)<q+1$, all these posets are Cohen-Macaulay over a field of characteristic $0$.

In this article, we study the symplectic case.
Recall that a symplectic space is a vector space (of even dimension) equipped with a non-degenerate alternating bilinear form such that every vector is isotropic.
% Let $V$ be a vector space over a field $\KK$ equipped with a bilinear form $\Psi$.
% We say that $\Psi$ is alternating if $\Psi(v,w) = - \Psi(w,v)$ for all $v,w\in V$.
% In addition, if $\chara(F) = 2$ then we also require $\Psi(v,v) = 0$ for all $v\in V$.
% If $V$ is a finite-dimensional vector space over a field $\KK$ and $\Psi$ is a non-degenerate alternating form, we say that $(V,\Psi)$, or simply $V$, is a symplectic space.
% Recall that $V$ admits a symplectic space structure if and only if $\dim(V)$ is even.
% Moreover, the symplectic structure is unique up to isometry.

It was shown by K. Das \cite{Das} that the poset of proper non-degenerate subspaces of a symplectic space of dimension $2n$ over a finite field $\GF{q}$, $q\neq 2$, is (homotopy) Cohen-Macaulay of dimension $n-2$.
In particular, this implies that the poset of non-trivial orthogonal decompositions into non-degenerate subspaces is also Cohen-Macaualay of dimension $n-2$ (cf. \cite[Corollary 2.7]{PW22}).
However, little is known about the homotopy type of the associated frame complex.

Let $V$ be a symplectic space of dimension $2n$ over a field $\KK$.
The orthogonality graph of $V$ is the graph $\G(V)$ whose vertices are the $2$-dimensional non-degenerate subspaces of $V$, with edges given by the orthogonality relation.
The frame complex of $V$, denoted by $\F(V)$, is the clique complex of $\G(V)$.
Note that $\F(V)$ has dimension $n-1$.
Since every simplex of dimension $n-2$ is contained in a unique maximal simplex, $\F(V)$ collapses to an $(n-2)$-dimensional subcomplex.
To avoid non-canonical choices of the collapses, we sometimes work with the poset $\redF(V)$ of frames of size $\neq 0,n-1$, which has dimension $n-2$ and is homotopy equivalent to $\F(V)$ (see \cite[Lemma 2.10]{PW22}).
Therefore, we can analyse the Cohen-Macaulay property on the poset $\redF(V)$.

In this article, we focus on studying some properties of $\G(V)$ and $\F(V)$.
We summarise our main results in the following theorems.

\begin{theorem}
\label{main:thm}
    Let $V$ be a symplectic space of dimension $2n$ over a field $\KK$.
    \begin{enumerate}
        \item $\F(V)$ is connected if and only if $n\geq 3$.
        \item If $n\geq 5$ then $\F(V)$ is simply connected.
        \item If $\KK=\GF{q}$ is finite and $n < q+3$ then $\redF(V)$ is Cohen-Macaulay over a field of characteristic $0$.
        \item If $\KK=\GF{q}$ is finite and $n > q^2(q^2+1) + n(n-2)q^{-4}(q^4+q^2+1)^{-1}$ then $\redF(V)$ 
         is not Cohen-Macaulay over any field.
    \end{enumerate}
\end{theorem}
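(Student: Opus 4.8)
The plan is to deduce non-Cohen--Macaulayness from the sign of the reduced Euler characteristic, which is the natural route to a statement holding over \emph{every} field. If $\redF(V)$, which has dimension $n-2$, were Cohen--Macaulay over some field $\kk$, then $\widetilde H_i(\redF(V);\kk)=0$ for all $i<n-2$, and since the order complex has dimension $n-2$ the reduced homology would be concentrated in the top degree; hence $(-1)^{n}\widetilde\chi(\redF(V))=\dim_\kk\widetilde H_{n-2}\geq 0$. As $\widetilde\chi$ is an integer independent of the coefficient field and $\redF(V)\simeq\F(V)$, it suffices to prove that $(-1)^{n}\widetilde\chi(\F(V))<0$ whenever $n$ satisfies the stated inequality.

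First I would count frames. Choosing a non-degenerate plane $W$ replaces $V$ by the symplectic space $W^\perp$ of dimension $2n-2$, so the number of frames of size $k$ is $\phi_k=\frac{1}{k!}\prod_{i=0}^{k-1}N(n-i)$, where $N(m)=\frac{q^{2m-2}(q^{2m}-1)}{q^{2}-1}$ is the number of non-degenerate planes of a $2m$-dimensional symplectic space over $\GF{q}$. With $\phi_0=1$ this gives $\widetilde\chi(\F(V))=\sum_{k=0}^{n}(-1)^{k-1}\phi_k$.

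The heart of the argument is the behaviour of the ratios $\phi_k/\phi_{k-1}=N(n-k+1)/k$, which are strictly decreasing in $k$, so that $(\phi_k)$ is unimodal. The three relevant values are $\phi_n/\phi_{n-1}=1/n$, $\phi_{n-1}/\phi_{n-2}=q^2(q^2+1)/(n-1)$ and $\phi_{n-2}/\phi_{n-3}=q^4(q^4+q^2+1)/(n-2)$. The hypothesis forces $n-1>q^2(q^2+1)$ and $n-2<q^4(q^4+q^2+1)$, so the peak of $(\phi_k)$ lies exactly at $k=n-2$. Writing $T:=(-1)^{n-1}\widetilde\chi=\phi_n-\phi_{n-1}+\phi_{n-2}-\phi_{n-3}+\cdots$ and isolating the peak, the block $\phi_n-\phi_{n-1}$ equals $-\phi_{n-1}(n-1)/n$, while the tail $-\phi_{n-3}+\phi_{n-4}-\cdots$ is an alternating sum of the strictly decreasing positive terms $\phi_{n-3}>\phi_{n-4}>\cdots$ and hence lies in $(-\phi_{n-3},0)$. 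Therefore $T>\phi_{n-2}-\phi_{n-1}(n-1)/n-\phi_{n-3}$, and dividing by $\phi_{n-2}$ and inserting the ratios above turns $T>0$ into exactly $n>q^2(q^2+1)+\frac{n(n-2)}{q^4(q^4+q^2+1)}$, i.e. $(-1)^{n}\widetilde\chi<0$.

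The main obstacle is the tail estimate: one must control the entire alternating sum below the peak tightly enough to land on the stated bound rather than a cruder one, which is where the strict unimodality is used to collapse $-\phi_{n-3}+\phi_{n-4}-\cdots$ into the clean interval $(-\phi_{n-3},0)$. A minor point to verify carefully is that the hypothesis indeed forces $n\geq q^2(q^2+1)+2$, so that $\phi_{n-2}>\phi_{n-1}$ holds strictly and the peak is genuinely at $n-2$ throughout the range; everything else reduces to elementary manipulation of the $q$-counts $N(m)$.
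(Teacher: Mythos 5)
Your proposal addresses only item (4) of the theorem; items (1)--(3) are not touched at all. In the paper these are the bulk of the work: (1) is the connectivity analysis of Theorem \ref{thm:connected}, (2) is proved by decomposing loops in $\G(V)$ into $3$-, $4$- and $5$-gons and filling them (Lemmas \ref{lm:reduction5gonsDim5orMore} and \ref{lm:4gonsDim5orMore}), and (3) requires Garland's method fed with the full eigenvalue computation of the orthogonality graph (Sections 3--5 and Theorem \ref{thm:eigenvalues}). None of that can be recovered from an Euler-characteristic count, so as a proof of the stated theorem the proposal has a substantial gap.

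For item (4) itself, your argument is correct and takes a genuinely different route from the paper's. The paper bounds ranks in the rational chain complex: using $\widetilde{H}_{n-1}(\F(V),\QQ)=0$ one gets $\dim\widetilde{H}_{n-3}(\F(V),\QQ)\geq f_n-f_{n-1}+f_{n-2}-f_{n-3}$, and positivity of this four-term sum is shown to be equivalent to the stated inequality; non-Cohen--Macaulayness over any field then follows since a nonzero rational Betti number in degree $n-3<n-2$ persists over every field. You instead observe that Cohen--Macaulayness over any field forces $(-1)^{n}\widetilde{\chi}<0$ to fail, and control the full alternating sum $\sum_k(-1)^{n-k}\phi_k$ via strict unimodality of the face numbers (the ratios $\phi_k/\phi_{k-1}=N(n-k+1)/k$ are strictly decreasing, and the hypothesis pins the peak at $k=n-2$; your flagged point that the hypothesis forces $n\geq q^2(q^2+1)+2$ does check out, since for $n=q^2(q^2+1)+1$ one computes $(A^2-1)/B=(q^6-1)/\bigl(q^4(q^4+q^2+1)\bigr)+1>1$ with $A=q^2(q^2+1)$, $B=q^4(q^4+q^2+1)$). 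After discarding the tail below $k=n-3$, both approaches reduce to the identical inequality $n>A+n(n-2)/B$, which is a pleasant consistency check. What the paper's version buys is slightly more information --- it identifies a specific nonvanishing group $\widetilde{H}_{n-3}$, which is what lets it propagate the failure to links in higher-dimensional spaces (item (2) of the paper's final proposition); your version only shows that homology is not concentrated in top degree. For the statement as written, either suffices.
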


Indeed, item (3) of the previous theorem follows by an application of Garland's method.
To that end, we have computed first the eigenvalues of $\G(V)$.
Let $d_n = \frac{q^{2n-4}(q^{2n-2}-1)}{q^2-1}$, which is the degree of a vertex of $\G(V)$.

\begin{theorem}
\label{main:eigenvalues}
Let $V$ be a symplectic space of dimension $2n$ over $\GF{q}$, with $n\geq 2$.
The eigenvalues of $\G(V)$ are as follows:
\[\begin{cases}
    d_n, \, q^{2n-5}, \, \pm q^{2n-4}, \, \pm q^{3n-6} & n\geq 3,\\
    d_n, \, -q^{2n-4} & n = 2.
\end{cases}\]
\end{theorem}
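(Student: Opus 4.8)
The plan is to realize $\G(V)$ as a relation in the association scheme afforded by the action of $G=\Sp(V)=\Sp_{2n}(q)$ on the set $X$ of hyperbolic lines (the non-degenerate $2$-subspaces), and to read off the eigenvalues from a small intersection matrix. Since $G$ acts transitively on $X$ with point stabilizer $H\cong\Sp_2(q)\times\Sp_{2n-2}(q)=\Sp(W)\times\Sp(W^\perp)$, the $G$-orbits on $X\times X$ define relations $R_0,R_1,\dots,R_d$, with $R_0$ the diagonal. By Witt's theorem these orbits are classified by the isometry type of the ordered pair $(W_1,W_2)$, i.e.\ by $\dim(W_1\cap W_2)\in\{0,1,2\}$ together with the rank and type of the cross pairing $W_1\times W_2\to\KK$; in particular the orthogonality relation forms a single orbit, namely $W_1\perp W_2$ (which forces $W_1\cap W_2=0$). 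Each relation turns out to be symmetric, so the scheme is commutative, and the spectrum of the orthogonality adjacency matrix $A=A_1$ equals that of the intersection matrix $B_1=(p^{k}_{1j})_{j,k}$ of size $d+1$. First I would enumerate the orbits to pin down $d$, handling the degeneracies for small $n$ (e.g.\ $n=2$, where $W^\perp$ is itself a line and $\G(V)$ is a perfect matching, and $n=3,4$, where some relations are empty).

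A useful simplification is that, via the bijection $W\mapsto W^\perp$ between non-degenerate $2$- and $(2n-2)$-subspaces, $A$ is exactly the incidence operator recording when a hyperbolic line lies in a non-degenerate $(2n-2)$-subspace: $A_{U,W}=1\iff W\subseteq U^\perp\iff W\perp U$. Consequently all entries of the $B_i$ reduce to counts of hyperbolic lines lying in non-degenerate subspaces of prescribed corank and in prescribed position relative to a fixed line, and these are governed by the clean formula $h(2m)=q^{2m-2}(q^{2m}-1)/(q^2-1)$ for the number of hyperbolic lines of a $2m$-dimensional symplectic space (note $d_n=h(2n-2)$, since the neighbours of $W$ are precisely the hyperbolic lines of $W^\perp$). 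The core computation is therefore to evaluate the intersection numbers $p^{k}_{1j}$ as explicit polynomials in $q$ using this recursive structure together with standard Gaussian-binomial counts of $2$-subspaces in a given relative position.

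Having assembled $B_1$, I would diagonalize it by hand. Its characteristic polynomial should factor over $\QQ(q)$ into the linear factor $x-d_n$ (the Perron eigenvalue, with all-ones eigenvector) together with factors producing $q^{2n-5}$ and the two sign-paired values $\pm q^{2n-4}$ and $\pm q^{3n-6}$; the $\pm$ pairing I expect to come from an involutive symmetry of $B_1$ interchanging two relations, which I would exhibit explicitly. As independent checks I would verify $\operatorname{tr}(A)=0$ and $\operatorname{tr}(A^2)=|X|\,d_n$ against the eigenvalues weighted by their multiplicities (recovered from $\sum_i m_i=|X|$ and the low moments), and confirm the $n=2$ and small-$n$ specializations.

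The main obstacle is the geometric case analysis underlying the intersection numbers: correctly enumerating the $G$-orbits on pairs of hyperbolic lines — especially separating the rank-$1$ and rank-$2$ cross-pairing cases and the $\dim(W_1\cap W_2)=1$ configurations, where the radical of $W_1+W_2$ must be tracked — and then counting, without double-counting, the hyperbolic lines orthogonal to $W_1$ in each prescribed position relative to $W_2$. Degenerate low-dimensional configurations, where a generic relation collapses or becomes empty, change the size of $B_1$ and must be treated separately; this is the source of the split between the $n\geq 3$ and $n=2$ cases in the statement.
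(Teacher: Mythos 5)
Your strategy is in essence the one the paper uses: partition ordered pairs of hyperbolic lines into finitely many classes, observe that the number of common neighbours (more generally, of walks) depends only on the class, and read the spectrum of the adjacency matrix off a small collapsed matrix whose entries are exactly your intersection numbers $p^k_{1j}$ (the paper's $\mu^n_{i,j}$ of Table \ref{tab:munji}, assembled in Sections 3--4 and diagonalised in Section 5). The counting formula $h(2m)=q^{2m-2}(q^{2m}-1)/(q^2-1)$ and the identification of the neighbours of $W$ with $\G(W^\perp)$ are likewise exactly what the paper exploits.

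There is, however, one concrete gap in your plan as stated: the orbital association scheme of $\Sp_{2n}(q)$ on hyperbolic lines does \emph{not} have boundedly many classes. For two lines $S=\gen{e,f}$, $W=\gen{w,u}$ spanning a non-degenerate $4$-space with non-degenerate rank-$2$ cross pairing, the determinant of the cross-pairing matrix (the paper's $w\CT{S}u$, equivalently $\Psi(w_S,u_S)$) is invariant under symplectic base changes of $S$ and $W$, and by Witt's lemma it separates orbits; it ranges over $\KK\setminus\{0,1\}$, so this single geometric configuration splits into $q-2$ orbitals (this is visible in the factor $(q-2)$ of $e_n^2$ in Table \ref{sixCases}). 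Hence $d+1$ is roughly $q+3$, your $B_1$ has size growing with $q$, and ``diagonalize by hand'' is no longer a uniform-in-$q$ computation. The fix is the paper's key observation: fuse all these orbits into one class, giving a partition into six classes independent of $q$, and then verify \emph{directly} that the number of $T\perp S$ lying in a prescribed class relative to $W$ depends only on the (fused) class of $(S,W)$ --- this is precisely the content of the paper's Propositions for Cases (2)--(6), and it is genuine work, not automatic from the scheme axioms, since a fusion of an association scheme need not be a scheme. Once that is done, the minimal polynomial of $A$ divides that of the resulting $6\times 6$ matrix, and the listed eigenvalues (and the degenerate $n=2$ case, where the graph is a perfect matching with minimal polynomial $X^2-1$) follow as you describe. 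Your symmetry and trace checks are sound but should be run against the fused matrix, remembering that it only controls the eigenvalues as a set, not their multiplicities.
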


We briefly describe the organisation of the paper.
Section 2 provides some preliminaries on symplectic spaces and the frame complex.
Sections 3 and 4 are more technical and contain the necessary background to compute the eigenvalues of $\G(V)$.
More concretely, in Section 3 we show that for two vertices $S,W$ of $\G(V)$, there are six possible configurations that characterise the relation between $S$, $W$ and the sum $S+W$ (see Definition \ref{def:sixGeneralCases}), and we give alternative algebraic descriptions of these possibilities in Table \ref{sixCases}.
In Section 4, for a fixed pair of vertices $S,W$ of $\G(V)$, we characterise the vertices $T$ adjacent to $S$ such that the behaviour of $T,W$ is as described in each of these six cases introduced in Section 3.
For $\KK$ finite, this allows us to explicitly compute the number of paths of a given length between $S$ and $W$, and see that these lengths only depend on which of these six cases hold for $S$ and $W$.
From this, in Section 5 we reduce the computation of the eigenvalues of the adjacency matrix of $\G(V)$ to compute the minimal polynomial of a certain $6$-dimensional matrix, from which we conclude Theorem \ref{main:eigenvalues} (see Theorem \ref{thm:eigenvalues}).

We prove items (1) and (2) of Theorem \ref{main:thm} in Sections 6 and 7 respectively.
In Section 8, we apply Garland's method to $\F(V)$ and obtain the conclusions of Theorem \ref{main:thm}(3).
Finally, we discuss some future directions and remarks in Section 9, where we establish item (4) of Theorem \ref{main:thm}.

\bigskip

\textbf{Acknowledgements.}
I would like to thank Volkmar Welker for all his support and the numerous discussions that we have had over the last year which led to the ideas presented here.
This article was written with the support of a fellowship granted by the Alexander von Humboldt Foundation, and some of the results were obtained during a research stay at Institut Mittag-Leffler in 2022.

\section{Symplectic spaces}

In this section, we recall some basic properties of symplectic spaces.
All vector spaces are finite-dimensional.
We refer the reader to Section 2 of \cite{PW22} for further details on definitions and notation.

From now on, by a \textit{symplectic space} we mean a vector space $V$ over a field $\KK$ equipped with a non-degenerate symplectic form $\Psi$.
Recall that the symplectic structure on $V$ is unique up to isometry (cf. \cite[(19.16)]{AscFGT}).
That is, there exists a basis $\{e_1,\ldots,e_{2n}\}$ of $V$ such that the matrix of the symplectic form $\Psi$ in this basis is given by
\begin{equation}
    \label{canonicalSymplecticMatrix}
    \mathbf{S}_n:= \left( \begin{matrix} 0 & \Id_n\\ -\Id_n & 0 \end{matrix}\right).
\end{equation}
Equivalently, for all $1\leq i,j\leq n$ we have
\[ \Psi(e_i,e_j) = 0 = \Psi(e_{n+i},e_{n+j}) \quad \text{ and } \quad \Psi(e_i,e_{n+j}) = \delta_{i,j} = -\Psi(e_{n+j},e_i).\]
% \[ \Psi(e_i,e_j) = 0 = \Psi(e_{n+i},e_{n+j}) \quad \forall 1\leq i,j\leq n,\]
% \[ \Psi(e_i,e_{n+j}) = \delta_{i,j} = -\Psi(e_{n+j},e_i) \quad \forall 1\leq i,j\leq n.\]
These kinds of bases are called \textit{symplectic bases}.
We will usually write $V$ for a symplectic space and the form $\Psi$ will be implicit.

More generally, we say that $(V,\Psi)$ is an \textit{$r$-degenerate symplectic space} if $\Psi$ is a bilinear alternating form with $\Psi(v,v) = 0$ for all $v\in V$ and such that $\dim(\Rad(V))=r$.

The following lemma summarises some well-known facts on symplectic spaces.
See for example \cite[Section 19]{AscFGT}.

\begin{lemma}
\label{lemmaBasicSymplecticSpaces}
Let $V$ be an $r$-degenerate symplectic space, and let $S,T\leq V$.
The following assertions hold:
\begin{enumerate}
    \item $V = \Rad(V)\oplus V'$ with $V'$ a symplectic space. %Moreover, $V/\Rad(V)$ has a natural symplectic structure making the canonical isomorphism $V'\to V/\Rad(V)$ an isometry.
    \item $S$ is an $s$-degenerate symplectic space, where $s = \dim(\Rad(S))$.
    \item $\dim(S) + \dim(S^\perp) = \dim(V) + \dim(S\cap \Rad(V))$.
    \item $(S^\perp)^\perp = S+\Rad(V)$.
    \item If $S$ is totally isotropic then $\dim(S) \leq (\dim(V)+\dim(S\cap \Rad(V)))/2$.
    \item $S$ is non-degenerate if and only if $S\oplus S^\perp = V$.
    \item (Witt's lemma) If $\alpha:S\to T$ is an isometry such that $\alpha(S\cap \Rad(V)) = T\cap \Rad(V)$, then $\alpha$ extends to an isometry of $V$.
\end{enumerate}
\end{lemma}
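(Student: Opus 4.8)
The plan is to derive everything from the linear map that the form induces into the dual space. Writing $R := \Rad(V)$, the assignment $v \mapsto \Psi(v,-)$ is a linear map $V \to V^{*}$ with kernel exactly $R$, so $\Psi$ descends to a \emph{non-degenerate} alternating form on $V/R$; in particular $V/R \cong (V/R)^{*}$. Item (1) is then immediate: choosing any complement $V'$ of $R$ in $V$, the restriction $\Psi|_{V'}$ is alternating, and it is non-degenerate because a vector of $V'$ orthogonal to all of $V'$ is automatically orthogonal to $R$ as well, hence lies in $R \cap V' = 0$. Item (2) is equally direct, since the restriction of $\Psi$ to $S$ is alternating and its radical is by definition $\{x \in S : \Psi(x,s) = 0 \ \forall\, s \in S\} = S \cap S^{\perp}$.

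The technical core is item (3), from which (4)--(6) follow formally. I would consider the map $\phi \colon V \to S^{*}$, $\phi(v) = \Psi(v,-)|_{S}$, whose kernel is precisely $S^{\perp}$. Its image is the space of functionals on $S$ vanishing on $S \cap R$: the inclusion ``$\subseteq$'' holds because $\Psi(v,x) = 0$ whenever $x \in R$, while surjectivity onto this space comes from lifting a functional on $S/(S \cap R) \cong (S+R)/R \hookrightarrow V/R$ along the non-degenerate pairing on $V/R$ identified in the first paragraph. Since this image has dimension $\dim S - \dim(S \cap R)$, rank--nullity gives $\dim V - \dim S^{\perp} = \dim S - \dim(S \cap R)$, which is exactly (3).

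Items (4), (5) and (6) are then dimension bookkeeping on top of (3). For (4), both $S$ and $R$ lie in $(S^{\perp})^{\perp}$, so $S + R \subseteq (S^{\perp})^{\perp}$; applying (3) to $S^{\perp}$ and using $S^{\perp} \cap R = R$ (as $R \subseteq S^{\perp}$ always) computes $\dim (S^{\perp})^{\perp} = \dim S + \dim R - \dim(S \cap R) = \dim(S+R)$, forcing equality. For (5), a totally isotropic $S$ satisfies $S \subseteq S^{\perp}$, so $\dim S \le \dim S^{\perp}$ and (3) yields $2\dim S \le \dim V + \dim(S \cap R)$. For (6), if $S \oplus S^{\perp} = V$ then $\Rad(S) = S \cap S^{\perp} = 0$, so $S$ is non-degenerate; conversely, a non-degenerate $S$ satisfies $S \cap R \subseteq \Rad(S) = 0$, so (3) gives $\dim S + \dim S^{\perp} = \dim V$, and together with $S \cap S^{\perp} = 0$ this forces $S \oplus S^{\perp} = V$.

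The one genuinely non-formal statement, and the step I expect to be the main obstacle, is item (7). I would reduce Witt's lemma to the classical extension theorem on the non-degenerate quotient $V/R$: the hypothesis $\alpha(S \cap R) = T \cap R$ ensures that $\alpha$ induces a well-defined isometry between the images $(S+R)/R$ and $(T+R)/R$ in $V/R$, which extends to a global isometry of $V/R$ by the non-degenerate case of Witt's theorem. The delicate part is to lift such an extension back to an isometry of $V$ that restricts to $\alpha$ on $S$ exactly (not merely modulo $R$) and respects the prescribed action on $S \cap R$; this is arranged by extending $\alpha$ linearly across compatible complements of the radical. As this is entirely classical, I would alternatively simply invoke it from \cite[Section 19]{AscFGT}.
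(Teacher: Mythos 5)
Your proof is correct. Note, however, that the paper does not actually prove this lemma: it is stated as a summary of well-known facts with a pointer to \cite[Section 19]{AscFGT}, so there is no internal argument to compare against. Your write-up supplies what the citation leaves implicit, and it does so along the standard lines: item (3) via the rank--nullity analysis of $v\mapsto \Psi(v,-)|_S$, with the key observation that the image is the annihilator of $S\cap\Rad(V)$ (surjectivity coming from non-degeneracy of the induced form on $V/\Rad(V)$), and items (4)--(6) as formal consequences. The dimension counts in (4) and (6) check out, including the use of $\Rad(V)\subseteq S^\perp$ and $S\cap\Rad(V)\subseteq\Rad(S)$. For item (7) you correctly identify the only genuinely non-formal step --- verifying that $\alpha(S\cap\Rad(V))=T\cap\Rad(V)$ makes the induced map on $(S+\Rad(V))/\Rad(V)$ a well-defined injective isometry, applying non-degenerate Witt on the quotient, and lifting --- and you sensibly defer the lifting details to the classical reference, which is exactly what the paper itself does for the entire lemma. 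The only value your argument adds beyond the citation is self-containedness of items (1)--(6); there is nothing to object to.
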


Denote by $\Sp(V)$ the group of isometries of a symplectic space $V$ of dimension $2n$ over $\KK$.
Since the symplectic structure is unique, this group depends only on $n$ and $\KK$, and so we may also write $\Sp_{2n}(\KK)$, or $\Sp_{2n}(q)$ if $\KK=\GF{q}$.
Recall that $|\Sp_{2n}(q)| = q^{n^2} \prod_{i=1}^n(q^{2i}-1)$.

% Below we recall some facts on the family of the symplectic groups $\Sp_{2n}(\KK)$.

% \begin{proposition}
% \label{propertiesSymplecticGroups}
% The following assertions hold:
% \begin{enumerate}
% \item If $M\in\GL_{2n}(\KK)$, then $ M \in \Sp_{2n}(\KK)$ if and only if $M^t \mathbf{S}_n M = \mathbf{S}_n$.
% \item $\Sp_{2n}(\KK)\leq \SL_{2n}(\KK)$, with equality if $n = 1$.
% \item $Z(\Sp_{2n}(\KK)) = \{\Id_{2n}, -\Id_{2n}\}$, so $|Z(\Sp_{2n}(\KK))| = \gcd(2,\chara(\KK)-1)$.
% \item $\PSp_{2n}(\KK) = \Sp_{2n}(\KK) / Z(\Sp_{2n}(\KK))$ is a simple group if and only if $(2n,\KK)\neq (2,\GF{2})$, $(2,\GF{3})$, $(4,\GF{2})$.
% \item $|\Sp_{2n}(q)| = q^{n^2} \prod_{i=1}^n(q^{2i}-1)$.
% \item If $q$ is even, then $\PSp_{2n}(q) \groupiso \Omega^+_{2n+1}(q)$.
% \end{enumerate}
% \end{proposition}

\medskip

Let $V$ be an $r$-degenerate symplectic space over $\KK$.
The \textit{orthogonality graph} $\G(V)$ is the graph whose vertices are the $2$-dimensional non-degenerate subspaces of $V$.
Two vertices of this graph are connected if and only if they are orthogonal.
We will usually identify $\G(V)$ with its set of vertices.
The \textit{frame complex} of $V$ is the clique complex of the orthogonality graph and we denote it by $\F(V)$.
A simplex $\sigma\in \F(V)$ of size $m$ is termed an \textit{$m$-frame}, or simply a \textit{frame}, and it consists of $m$ pairwise orthogonal $2$-dimensional non-degenerate subspaces of $V$.
We denote by $\F_m(V)$ the set of frames of size $m$.
Note that $\F(V)$ is a simplicial complex of dimension $n-1$.

By Witt's lemma, $\Sp(V)$ is transitive on the set of frames of the same size, and the stabiliser of the action can be easily computed.
That is, if $\sigma$ is an $m$-frame, then
\begin{equation}
\label{eq:stabilizer}
\Stab_{\Sp(V)}(\sigma) \groupiso \left( \SL_2(\KK)^m \rtimes \Sym_m\right) \times \Sp_{2(n-m)}(\KK).
\end{equation}
Here we have used that $\Sp_2(\KK) = \SL_2(\KK)$.
In particular, when $\KK = \GF{q}$ is a finite field, we obtain a nice expression for the reduced Euler characteristic of $\F(V)$.

% It is easy to verify, by Witt's lemma above, that $\Sp(V)$ is transitive on the sets of flags and frames of the same type.
% \begin{enumerate}
%     \item For two flags $\sigma_1 = (S_0 < S_1 < \ldots < S_m)$, $\sigma_2 = (T_0<T_1<\ldots<T_{m'})$ of non-degenerate subspaces, there exists $\alpha\in\Sp(V)$ such that $\alpha(\sigma_1) = \sigma_2$ (and hence $\alpha(S_i) = T_i$ for all $i$) if and only if $m=m'$ and $\dim(S_i) = \dim(T_i)$ for all $i$.
%     Thus, the orbits of flags just depend on their dimension vectors.
%     Therefore, if $e_{-1} = 0 \leq e_0 < e_1 < \ldots < e_m \leq 2n = e_{m+1}$ is a sequence of even dimensions, the stabilizer of a flag with dimension vector $(e_0,e_1,\ldots,e_m)$ is isomorphic to
%     \[ \prod_{i=0}^{m+1} \Sp_{e_i-e_{i-1}}(\KK).\]
%     \item An $m$-frame of $V$ is a set $\{S_1,\ldots,S_m\}$ of pairwise orthogonal and $2$-dimensional non-degenerate subspaces of $V$.
%     Then $\Sp(V)$ is transitive on the set $\F_m(V)$ of $m$-frames of $V$, and the stabilizer is isomorphic to
%     \[ \left( \Sp_{2}(\KK)^m \rtimes \Sym_m\right) \times \Sp_{2n-2m}(\KK).\]
% \end{enumerate}

\begin{corollary}
\label{eulerCharFramesSymplectic}
Let $V$ be a symplectic space of dimension $2n$ over $\GF{q}$.
Then
\[\tilde{\chi}(\F(V)) = \sum_{m=0}^n (-1)^{m+1} |\F_m(V)| = \sum_{m=0}^n (-1)^{m+1} \frac{|\Sp_{2n}(q)|}{ q^m (q^2-1)^m m! |\Sp_{2(n-m)}(q)| } .\]
\end{corollary}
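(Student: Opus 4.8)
The plan is to combine the combinatorial definition of the reduced Euler characteristic with the orbit--stabiliser theorem applied to the transitive $\Sp(V)$-action on frames of a fixed size.

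First I would unwind the first equality. Recall that the reduced Euler characteristic of a simplicial complex counts each simplex, including the empty one (of dimension $-1$), with sign $(-1)^{\dim}$. By definition an $m$-frame is a simplex on $m$ vertices, hence of dimension $m-1$, and $\F(V)$ has dimension $n-1$, so $m$ ranges over $0\leq m\leq n$. Grouping simplices by their size therefore gives
\[
\tilde{\chi}(\F(V)) = \sum_{m=0}^{n} (-1)^{m-1} |\F_m(V)| = \sum_{m=0}^{n} (-1)^{m+1} |\F_m(V)|,
\]
where the term $m=0$ records the empty frame (contributing $-1$), and $(-1)^{m-1}=(-1)^{m+1}$. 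This is exactly the first displayed equality.

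For the second equality I would count $|\F_m(V)|$ by orbit--stabiliser. By Witt's lemma (Lemma \ref{lemmaBasicSymplecticSpaces}(7)), $\Sp(V)$ acts transitively on $\F_m(V)$, so for any fixed $m$-frame $\sigma$,
\[
|\F_m(V)| = \frac{|\Sp_{2n}(q)|}{|\Stab_{\Sp(V)}(\sigma)|}.
\]
Using the description \eqref{eq:stabilizer} of the stabiliser together with $\Sp_2(\GF{q})=\SL_2(\GF{q})$, one computes
\[
|\Stab_{\Sp(V)}(\sigma)| = |\SL_2(q)|^{m}\, m!\, |\Sp_{2(n-m)}(q)| = q^{m}(q^2-1)^{m}\, m!\, |\Sp_{2(n-m)}(q)|,
\]
where the last step inserts $|\SL_2(q)| = q(q^2-1)$. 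Substituting this back into the orbit--stabiliser formula and then into the sum above yields the claimed closed form.

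I do not expect a genuine obstacle here: the statement is a direct bookkeeping consequence of the transitivity and of the stabiliser computation already recorded in \eqref{eq:stabilizer}. The only points that demand care are the sign convention for $\tilde{\chi}$ (and in particular the role of the empty simplex at $m=0$, which supplies the reduction) and the evaluation $|\SL_2(q)| = q(q^2-1)$, which is what fixes the factor $q^{m}(q^2-1)^{m}$ in the denominator.
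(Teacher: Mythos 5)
Your proposal is correct and follows exactly the route the paper intends: the corollary is stated as an immediate consequence of the transitivity from Witt's lemma and the stabiliser formula in Eq.~\eqref{eq:stabilizer}, combined via orbit--stabiliser and the sign convention for the reduced Euler characteristic (including the empty frame at $m=0$). Your evaluation $|\SL_2(q)|=q(q^2-1)$ and the resulting denominator match the paper's formula.
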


As noted in \cite[Lemma 2.10]{PW22}, if $V$ is non-degenerate of dimension $2n$, $\F(V)$ collapses to an $(n-2)$-dimensional subcomplex.
In fact, if we let $\redF(V)$ to denote the poset of frames of size $\neq 0,n-1$, then $\F(V)$ is homotopy equivalent to (the order complex of) $\redF(V)$, where the latter has dimension $n-2$.
In particular, we see that $\widetilde{H}_{n-1}(\F(V),\ZZ) = 0$.

\section{Counting pairs $(S,W)$ of non-degenerate $2$-dimensional subspaces}

In this section, we focus on studying the algebraic relations between two non-degenerate $2$-dimensional subspaces $S,W\in \G(V)$.
From now on, $V$ will denote a symplectic space of dimension $2n$, $n\geq 2$, over a field $\KK$.

\begin{remark}
[Symplectic bases in dimension $2$]
\label{rk:symplecticBases}
Let $W\in \G(V)$, so $W = \gen{w,u}$ and $\Psi(w,u) \neq 0$ since $\Rad(W) = 0$.
After re-scaling $w$, we suppose that $\Psi(w,u) = 1$.
%Recall that $\Psi(u,w) = -1$ by the skew-symmetric hypothesis.
%From now on, we are going to work in odd characteristic.
Then, the set of ordered pairs $(w',u')$ generating $W$ with $\Psi(w',u') = 1$ is given by the linear combinations
\[ \begin{cases}
w' = aw + bu,\\
u' = cw + du,
\end{cases}\]
such that $1 = \Psi(w',u') = ad - bc$.
Hence, such pairs are in one-to-one correspondence with the group of isometries of $W$, that is, the special linear group $\SL_2(\KK)$.
\end{remark}

% Next, for simplicity, we are going to work in $V = \KK^{2n}$ with the canonical symplectic form $\Psi = \Psi_e$ whose underlying matrix $\mathbf{S}_n$ is given in (\ref{canonicalSymplecticMatrix}).
% Let $e_1,\ldots,e_{2n}$ denote the canonical basis of $V$.
% Then, for all $v,w\in V$ we have
% \begin{equation}
%     \label{canonicalSymplecticForm}
%     \Psi_e(v,w) = \sum_{i=1}^n v_i w_{n+i} - \sum_{j=1}^n v_{n+j}w_j.
% \end{equation}
% For $1\leq i\leq n$, define $E_i := \gen{e_i, e_{n+i}}$, and note that $\Psi(e_i,e_{n+i}) = 1$.

%Note that by transitivity, it is enough to understand the case $S = E_1$.

Before we continue, we introduce the following useful and non-standard notation.
Let $v\in V$ and $S,W\in \G(V)$,
with symplectic bases $x,y\in S$ and $w,u\in W$.
%$S = \gen{x,y}$ and $W = \gen{w,u}$ with $\Psi(x,y) = 1$ and $\Psi(w,u) = 1$.
We define:
\begin{equation}
    \label{generalNotation}
    \begin{cases}
    v_x := \Psi(v,y),\\
    v_y := -\Psi(v,x),\\
    v_S := v - v_xx - v_yy,\\
    w\CT{S} u := 1 - \Psi(w_S,u_S) = w_x u_y - w_y u_x.
    \end{cases}
\end{equation}
The vector $v_S$ is the orthogonal projection of $v$ onto $S^\perp$, and $v-v_S$ is the projection of $v$ onto $S$.
These two vectors do not depend on the choice of the basis $\{x,y\}$.
Thus, the value $w\CT{S} u$ does not depend on the basis either.
However, $v_x$ and $v_y$ do depend on the choice of the basis.

In the following definition, we describe six pairwise disjoint possible configurations of a pair of vertices $S,W$ of $\G(V)$.
Our goal will be to fix $S$ and characterise each case by giving suitable algebraic relations between $S$ and $W$.

\begin{definition}
\label{def:sixGeneralCases}
For a fixed $S\in \G(V)$ and $1\leq i \leq 6$, define $\E_i(S)$ to be the set of $W\in \G(V)$ satisfying the conditions of item ($i$) below:
\begin{enumerate}
    \item $\dim(S+W) = 2$, that is, $S = W$.
    \item $\dim(S+W) = 3$.
    \item $\dim(S+W) = 4$ and $S+W$ is degenerate.
    \item $S\perp W$.
    \item $S+W$ is non-degenerate, $S\not\perp W$, and $w\CT{S} u = 0$.
    \item $\dim(S+W)=4$, $S+W$ is non-degenerate, $S\not\perp W$, and $w\CT{S} u \neq 0$.
\end{enumerate}
Note that $\dim(S+W) = 4$ in cases (4) and (5), and $S\neq W$ in case (5).
\end{definition}

The following observations will be useful to characterise cases (1-6) above.

\begin{remark}
\label{rk:radicalContainedInTilde}
Let $S,W\in \G(V)$ with symplectic bases $x,y$ and $w,u$ respectively.
%Suppose that $S = \gen{x,y}$, $\Psi(x,y)=1$, and $W = \gen{w,u}$, $\Psi(w,u) = 1$.
Then
\[ S+W = \gen{x,y,w_S,u_S} = S \oplus \gen{w_S,u_S}, \text{ and } \gen{w_S,u_S}\perp S.\]
%From the latter relation we deduce that $\Rad(S+W) \leq \gen{w_S,u_S}$, with equality when $S+W$ is degenerate.
Hence, $S+W$ is degenerate if and only if $S+W = S \oplus \Rad(S+W)$ with $\Rad(S+W) = \gen{w_S,u_S}$.
\end{remark}

Below we describe in more detail the sets $\E_1(S),\ldots,\E_6(S)$ and compute their sizes when $\KK$ is finite.
For $\KK$ infinite, $|\E_i(S)|$ can be $0$, $1$ or infinite (here we set $0.|\KK| = 0$ and $|\KK|^0=1$).
This can be derived from our proofs after taking extra care on the set bijections.
We omit details for the infinite case and leave the verification for the reader.

Since $\E_1(S) = \{S\}$, we will focus on describing the sets $\E_i(S)$ with $i\geq 2$.
% By transitivity, it will be enough to consider the case $S = E_1$, which will simplify some notation.
% So, in the subsequent proofs, we relax the notation as in Eq. (\ref{generalNotationCanonicalChoice}) and assume that $S = E_1$.

\bigskip

\textbf{Case (2).} $W\in \E_2(S)$ if and only if $W = \gen{w,u}$ with $\Psi(w,u) = 1$, $w_S\neq 0$ and $u_S = 0$.
For $\KK = \GF{q}$ we have
\begin{equation}
    \label{eqDegSumDim3}
    b_n:=  |\E_2(S)| = (q^{2n-2}-1)(q+1).
\end{equation}

\begin{proof}
Suppose first that $\dim(S+W) = 3$ and pick a symplectic basis $w,u$ of $W$.
By Remark \ref{rk:radicalContainedInTilde}, $\gen{w_S,u_S}$ is $1$-dimensional.
Hence, by changing $(w,u)$ by $(-u,w)$ if necessary, we can suppose without loss of generality that $w_S\neq 0$ and $u_S = \lambda w_S$.
%so either $w_S\neq 0$ or $u_S\neq 0$.
In particular, we see that $\Psi(w_S,u_S) = 0$, leading to $1 = \Psi(w,u) = w\CT{S} u$.
Consider the basis $w' = w$, $u' = u - \lambda w$.
Then $\Psi(w',u') =  \Psi(w,u) = 1$, $w'_S= w_S\neq 0$ and $u'_S = u_S - \lambda w_S = 0$.
That is, $w',u'$ is a basis of $W$ as in the statement.

Reciprocally, if $W = \gen{w,u}$ with $\Psi(w,u) = 1$, $w_S\neq 0$ and $u_S = 0$, then $S+W = S\oplus\gen{w_S}$ has dimension $3$.

Now we count the number of such $W$.
To this aim, we count the number of pairs of vectors $(w,u)$ satisfying the conditions of the equivalence of this case, and then divide by the number of such pairs contained in a fixed $W$.
We fix a basis $S = \gen{x,y}$ with $\Psi(x,y)=1$.

First, the number of pairs $((w_x,w_y), (u_x,u_y))$ with $w_xu_y-w_yu_x = 1$ is exactly the number of bases in a $2$-dimensional symplectic space.
By Remark \ref{rk:symplecticBases} this number is just $|\SL_2(\KK)|$.

Second, $w = w_xx + w_yy + w_S$, where $w_S\in S^\perp\setminus 0$ is arbitrary, and $u = u_xx + u_yy$.
This gives $|\KK|^{2n-2}-1$ possibilities for $w_S$.
Therefore, the number of pairs $(w,u)$ with the properties $w\CT{S} u = \Psi(w,u) = 1$, $w_S\neq 0$ and $u_S = 0$ is $(|\KK|^{2n-2}-1)|\SL_2(\KK)|$.

Finally, suppose that $W = \gen{w,u}$, with $(w,u)$ as above, and $w' = aw+bu$, $u'=cw+du$ is also a pair satisfying the above conditions.
That is:
\[\begin{cases}
1 = \Psi(w',u') = ad - bc,\\
0\neq w'_S = a w_S, \Leftrightarrow a\neq 0,\\
0 = u'_S = c w_S, \Leftrightarrow c = 0.
\end{cases}\]
Hence, we see that $w' = aw + bu$, $u' = du$,  where $1 = ad$, i.e. $d = a^{-1}$.
Then we have $|\KK|-1$ possibilities for $a$, and $|\KK|$ for $b$.
This gives $|\KK|(|\KK|-1)$ pairs $(w',u')$ satisfying these conditions and inside $\gen{w,u}$.
This allows us to conclude that, for $\KK = \GF{q}$,
\[b_n = |\E_2(S)| = \frac{ (q^{2n-2}-1)q(q^2-1)}{q(q-1)} = (q^{2n-2}-1)(q+1).\]
\end{proof}

\textbf{Case (3).} $W\in \E_3(S)$ if and only if $W = \gen{w,u}$ with $\Psi(w,u)=1$, $w_S,u_S$ linearly independent and $\Psi(w_S,u_S) = 0$.
Moreover, for $\KK = \GF{q}$ we have
\[c_n := |\E_3(S)| = (q^{2n-2}-1) (q^{2n-3}-q).\]

\begin{proof}
Suppose that $W = \gen{w,u}$.
Then $S+W = S \oplus \gen{w_S,u_S}$ has dimension $4$ if and only if $w_S$, $u_S$ are linearly independent.
In addition, $S+W$ is degenerate if and only if $\gen{w_S,u_S}$ is totally isotropic, that is, $\Psi(w_S,u_S) = 0$ (see Remark \ref{rk:symplecticBases}).
Hence the equivalence of Case (3) holds.

Now we count the number of such $W$.
We proceed analogously as in the previous case.

Note that $1 = \Psi(w,u) = w\CT{S} u$.
This gives $|\SL_2(\KK)|$ possibilities for the pair of vectors $(w-w_S,u-u_S)$.
Next, we fix some non-zero $w_S\in S^\perp$, giving $|\KK|^{2n-2}-1$ possibilities.
Then $u_S$ is a non-zero vector in $\gen{w_S}^\perp \cap S^\perp = \gen{w_S,S}^\perp$, where the latter subspace has dimension $2n-3$.
However, since $w_S\perp w_S$, we need to exclude the scalar multiples of $w_S$ from the choices of $u_S$.
This gives $|\KK|(|\KK|^{2n-4}-1)$ possibilities for $u_S$.
In consequence, the number of pairs $(w,u)$ satisfying that $\Psi(w,u)=1$, $w_S,u_S$ are linearly independent and $\Psi(w_S,u_S) = 0$, is
\[  (|\KK|^{2n-2}-1)|\KK|(|\KK|^{2n-4}-1) |\SL_2(\KK)| .\]
Finally, to determine $|\E_3(W)|$, we count how many of these pairs generate the same subspace $W$.
Suppose that $(w,u)$ and $(w',u')$, with $w' = aw+bu$ and $u' = cw+du$, are two pairs that satisfies these conditions.
This means that
\[\begin{cases}
1 = \Psi(w',u') = ad-bc,\\
0 = \Psi(w'_S,u'_S) = \Psi(a w_S + bu_S,cw_S+du_S) & \text{(already holds),}\\
w'_S = aw_S + bu_S,\,\, u'_S= cw_S+du_S \text{ l.i., } \Leftrightarrow ad-bc\neq 0.
\end{cases}\]
Then the number of such pairs $(w',u')$ in a given $W$ is $|\SL_2(\KK)|$.
This shows that for $\KK = \GF{q}$, $c_n = (q^{2n-2}-1) (q^{2n-3}-q)$.
\end{proof}

\textbf{Case (4).} $\E_4(S) = \G(S^\perp)$, and for $\KK = \GF{q}$ we have
\[ d_n := |\E_4(S)| =  |\G(S^\perp)| = \frac{q^{2n-4} (q^{2n-2}-1)}{(q^2-1)}.\]

\begin{proof}
This is clear.
\end{proof}

Before moving to cases (5) and (6) of Definition \ref{def:sixGeneralCases}, we compute first $\E_5(S)\cup \E_6(S)$:

\bigskip

\textbf{Case (5)$\cup$(6).} $S+W$ is non-degenerate of dimension $4$ with $S\not\perp W$ if and only if $W = \gen{w,u}$ with $w\neq w_S$ and $\Psi(w_S,u_S) \neq 0$.
For $\KK = \GF{q}$ we have
\[ e_n^0 := |\E_5(S) \cup \E_6(S)| =  d_{n+1} - d_n - c_n - b_n - 1 = q^{2n-4}(q^{2n-2}-1)(q^2-q+1).\]

\begin{proof}
Note that $S\not\perp W$ if and only if, for $W = \gen{w,u}$ either $w\neq w_S$ or $u\neq u_S$.
Then, after changing the basis, we can suppose that $S\not \perp W$ if and only if $w\neq w_S$ for some basis $w,u$ of $W$.
Next, $\dim(S+W) = 4$ if and only if $w_S,u_S$ are linearly independent.
And, since $W\neq S$, $S+W$ is non-degenerate if and only if $\Psi(w_S,u_S)\neq 0$ (indeed this already implies that $w_S,u_S$ are linearly independent).
This gives the description of $W$ in the equivalence.

To count the number of such $W$, note that this is just the complement of the other four cases and that these five cases partition $\G(V)$, with $|\G(V)| = d_{n+1}$.
\end{proof}

Finally, to complete the characterisation of cases (5) and (6), we only need to characterise case (5) since case (6) can be regarded as the complementary case in $\E_5(S)\cup \E_6(S)$.

\bigskip

\textbf{Case (5).} $W\in \E_5(S)$ if and only if $W = \gen{w,u}$ with $w\neq w_S$, $u=u_S$ and $\Psi(w_S,u_S) = 1$.
The number of such $W$ for $\KK = \GF{q}$ is
\[ e_n^1 := |\E_5(S)| = q^{2n-4}(q^{2n-2}-1)(q+1).\]

\begin{proof}
The description is immediate from the union Case (5)$\cup$(6):
since the additional condition $w\CT{S} u = 0$ holds if and only if $u-u_S = \lambda (w-w_S)$, we can change $u$ by $u' = u - \lambda w$ and we get the desired basis.
Here we are using that $w\neq w_S$.

Next, we compute the size of $\E_5(S)$.
We have that $w-w_S$ is a non-zero vector of $S$, for which we have $|\KK|^2-1$ possibilities.
The vector $u=u_S$ is then completely determined from $u_S$.
On the other hand, the number of pairs $(w_S,u_S)$ with $\Psi(w_S,u_S) = 1$ is given by $|\SL_2(\KK)| |\G(S^\perp)|$.
Thus, we get $(|\KK|^2-1)|\SL_2(\KK)| |\G(S^\perp)|$ possible pairs $(w,u)$.

It remains to determine how many of them lie in the same fixed subspace $W = \gen{w,u}$.
Suppose that $w,u$ and $w',u'$ are two bases of $W$ satisfying the same requirements of this case.
Write $w'=aw+bu$ and $u'=cw+du$.
Then:
\[\begin{cases}
1 = \Psi(w',u') = ad-bc,\\
0\neq w'-w'_S = a(w-w_S),\\
0 = u'-u'_S = c(w-w_S).
\end{cases}\]
Thus $c = 0$, $d = a^{-1}$, and there are no restrictions on $b$.
This yields $|\KK|(|\KK|-1)$ possibilities for $(w',u')$.

In conclusion, for $\KK = \GF{q}$,
\[ e_n^1 = \frac{(q^2-1)|\SL_2(q)| |\G(S^\perp)|}{q(q-1)} = q^{2n-4}(q^{2n-2}-1)(q+1).\]
\end{proof}

For Case (6) we will not need a more concrete description since it will always fall in the complement of the cases (1-5).
From this, the size of $\E_6(S)$ can also be easily computed.

\bigskip

\textbf{Case (6).} For $\KK = \GF{q}$ we have
\[ e_n^2 := |\E_6(S)| = q^{2n-3}(q^{2n-2}-1)(q-2).\]

\begin{remark}
Note that the values $b_n,c_n,d_n,e_n^0,e_n^1,e_n^2$ are invariants of $n$ and $q$, and do not depend on the choice of $S$.
\end{remark}

We summarise the description of these five cases in Table \ref{sixCases}.

\setlength{\extrarowheight}{0.3cm}
\begin{table}[ht]
    \centering
    %\begin{tabular}{|@{ }p{3.7cm}@{ }|@{ }p{5.1cm}@{ }|@{ }p{5.3cm}@{ }|}
    \begin{tabular}{|c|c|c|}
    \hline
         $S\in \G(V)$ & {\pbox{4.8cm}{\centering Conditions on a basis $w,u$ of $W$ with $\Psi(w,u)=1$}} & Size of $\E_i(S)$ for $\KK = \GF{q}$\\[0.3cm]
         \hline
        $W\in \E_1(S) = \{S\}$ & $W = S$ & $1$ \\[0.3cm]
        \hline
        $W\in \E_2(S)$ & $w_S\neq 0 = u_S$ & $b_n = (q^{2n-2}-1)(q+1)$\\[0.3cm]
         \hline
        $W\in \E_3(S)$ & $w_S, u_S$ are li with $\Psi(w_S, u_S) = 0$ & $c_n = (q^{2n-2}-1)(q^{2n-3}-q)$\\[0.3cm]
         \hline
        $W\in \E_4(S)$ & $w=w_S$ and $u=u_S$ & $d_n = \frac{q^{2n-4}(q^{2n-2}-1)}{(q^2-1)}$ \\[0.3cm]
        \hline
        $W\in \E_5(S)\cup \E_6(S)$ & $\Psi(w_S,u_S) \neq 0$ and $w\neq w_S$ & $e_n^0 = q^{2n-4}(q^{2n-2}-1)(q^2-q+1)$\\[0.3cm]
        \hline
        $W\in \E_5(S)$  & $w\neq w_S$ and $u=u_S$ & $e_n^1 = q^{2n-4}(q^{2n-2}-1)(q+1)$\\[0.3cm]
        \hline
        $W\in \E_6(S)$  & $\Psi(w_S,u_S) \neq 0$ and $w\CT{S} u \neq 0$ & $e_n^2 = q^{2n-3}(q^{2n-2}-1)(q-2)$\\[0.3cm]
        \hline
    \end{tabular}
    \caption{Possible relations between two non-degenerate $2$-dimensional subspaces $S$ and $W$ of a $2n$-dimensional symplectic space $V$ over $\KK$, $n\geq 2$. The values in the third column are independent of the choice of $S$.}
    \label{sixCases}
\end{table}
\setlength{\extrarowheight}{0cm}

% \begin{remark}
% Note that if $w\CT{S} u = 0$ then $W\in \E_4(S)\cup \E_5(S)$: it is straightforward to check that $W\neq S$, and also $1 = \Psi(w,u) = \Psi(\tilde{w}^S,\tilde{u}^S)$. 
% Thus, by Table \ref{sixCases}, $W\notin \E_1(S) \cup \E_2(S)\cup \E_3(S) \cup \E_6(S)$, and this leads to $W\in \E_4(S)\cup \E_5(S)$.
% \end{remark}

Denote by $d_{n+1}$ the number of $2$-dimensional non-degenerate subspaces of a symplectic space of dimension $2n$ over $\KK$.
Note that this number depends only on $\KK$ and $n$.
In the following lemma, we compute the number of $2$-dimensional non-degenerate subspaces in an $r$-degenerate symplectic space in terms of the size of $\KK$ and $d_{n+1}$.

\begin{lemma}
\label{lemmaNr2dimNDSubspaces}
Let $V$ be an $r$-degenerate symplectic space of dimension $2n+r$.
Then the number of non-degenerate $2$-dimensional subspaces of $V$ is:
\[d_{n+1,r} := d_{n+1} \cdot |\KK|^{2r}.\]
\end{lemma}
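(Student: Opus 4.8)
The plan is to use the canonical decomposition $V = \Rad(V)\oplus V'$ supplied by Lemma \ref{lemmaBasicSymplecticSpaces}(1), where $V'$ is a \emph{non-degenerate} symplectic space of dimension $2n$, and to fibre the non-degenerate $2$-dimensional subspaces of $V$ over those of $V'$ via the projection along $\Rad(V)$. Since $V'$ is a non-degenerate symplectic space of dimension $2n$, it contains exactly $d_{n+1}$ non-degenerate $2$-dimensional subspaces by the definition of $d_{n+1}$, so it suffices to show that each such subspace has precisely $|\KK|^{2r}$ preimages.

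First I would record the basic constraint: if $W\leq V$ is non-degenerate and $2$-dimensional, then $W\cap \Rad(V) = 0$. Indeed, any nonzero $v\in W\cap \Rad(V)$ is orthogonal to all of $V$, hence to $W$, so $v\in \Rad(W)$, contradicting non-degeneracy of $W$. Fix the projection $p\co V\to V'$ along $\Rad(V)$. For such a $W$ the restriction $p|_W$ has kernel $W\cap\Rad(V) = 0$, so $p(W)$ is again $2$-dimensional. Because every element of $\Rad(V)$ is orthogonal to all of $V$, for $w_1,w_2\in W$ we have $\Psi(w_1,w_2) = \Psi(p(w_1),p(w_2))$; thus $p|_W$ is an isometry of $W$ onto $p(W)$, and in particular $W$ is non-degenerate if and only if $p(W)$ is. This yields a well-defined map $W\mapsto p(W)$ from the non-degenerate $2$-dimensional subspaces of $V$ to those of $V'$.

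It remains to compute the fibre over a fixed non-degenerate $2$-dimensional $U\leq V'$. The key claim is that the preimages are exactly the graphs of linear maps $U\to \Rad(V)$. Any $W$ with $p(W) = U$ lies in $p^{-1}(U) = U\oplus \Rad(V)$ and projects isomorphically onto $U$; hence for each $u\in U$ there is a unique $w\in W$ with $p(w) = u$, and setting $\phi(u) := w - u\in \Rad(V)$ defines a linear map $\phi\co U\to \Rad(V)$ with $W = \{\,u+\phi(u) : u\in U\,\}$. Conversely every such graph is a $2$-dimensional subspace projecting isomorphically onto $U$, hence non-degenerate by the isometry argument above. This is a bijection between the fibre over $U$ and $\Hom(U,\Rad(V))$, so the fibre has $|\KK|^{\dim(U)\dim(\Rad(V))} = |\KK|^{2r}$ elements. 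Multiplying the $d_{n+1}$ choices of $U$ by the fibre size $|\KK|^{2r}$ gives $d_{n+1,r} = d_{n+1}\cdot |\KK|^{2r}$.

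The only delicate point is the identity $\Psi(w_1,w_2) = \Psi(p(w_1),p(w_2))$ and its consequence that the entire fibre consists of non-degenerate subspaces; this is the observation that the radical direction does not affect the form. Once this is in place the graph parametrisation of the fibre is routine, and I do not anticipate any genuine obstacle.
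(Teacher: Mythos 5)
Your proof is correct and follows essentially the same route as the paper's: decompose $V = \Rad(V)\oplus V'$ and fibre the non-degenerate $2$-dimensional subspaces of $V$ over $\G(V')$ via the projection along the radical, then show each fibre has size $|\KK|^{2r}$. The only difference is in the fibre count — the paper counts ordered symplectic bases and divides by $|\SL_2(\KK)|$, whereas you exhibit a direct bijection with $\Hom(U,\Rad(V))$ via graphs of linear maps, which is equivalent (and in fact handles the infinite-field case without any division).
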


\begin{proof}
Decompose $V = \Rad(V)\oplus V'$, where $V'$ is a symplectic space of dimension $2n$ by Lemma \ref{lemmaBasicSymplecticSpaces}.
Then the projection map $p:V\to V'$ has the property that, if $S\leq V'$ is non-degenerate, then $p^{-1}(S) = \Rad(V)\oplus S$.
Moreover, if $S\leq V'$ and $W\leq V$ are non-degenerate with $p(S) = p(W)$, then $W \leq \Rad(V)\oplus S$.
If in addition $W\leq V'$ then $W = S$ since $p|_{V'}$ is the identity map.
%On the other hand, if $S\leq V'$ is a $2$-dimensional non-degenerate subspace, then the number of $2$-dimensional non-degenerate subspaces of $\Rad(V)\oplus S$ depends only on the dimension of $\Rad(V)$.
Since $\G(\Rad(V)\oplus S)$ and $\G(\Rad(V)\oplus W)$ are isomorphic for any $S, W\in \G(V')$, we conclude that, for $S\in \G(V')$,
\[d_{n+1,r} = |\G(V)| = |\G(V')|  \cdot |\G(\Rad(V)\oplus S)|.\]

Let $T = R\oplus S$, where $R = \KK^{r}$ and $S = \KK^2$ is the canonical symplectic space.
Equip $T$ with an $r$-degenerate symplectic form $\Psi$ such that $R = \Rad(T)$ and $(S,\Psi|_S)$ has a symplectic basis $e_1,e_2$.
For vectors $v,v'\in R$ and constants $a,b,c,d\in \KK$, we see that $w = v +  (ae_1+be_2)$, $u = v' + (ce_1+d e_2)$ is a symplectic basis of a non-degenerate $2$-dimensional subspace $W = \gen{w,u}$ if and only if 
\[1 = \Psi(w,u) = \Psi(v + (ae_1+be_2), v' + (ce_1+d e_2) ) = \Psi(ae_1+be_2,ce_1+d e_2) =  ad-bc.\]
Hence, we have $|\SL_2(\KK)|  \cdot |\KK|^{2r}$ possible bases.
And, as we saw previously, in each subspace $W$ we have $|\SL_2(\KK)|$ such bases.
This gives the formula as in the statement of the lemma (note that the equality also holds in the infinite case).
\end{proof}

\section{Counting triples $(S\perp T,W)$ of non-degenerate $2$-dimensional subspaces}

Let $V$ be a symplectic space of dimension $2n$ over a field $\KK$.
In the previous section, we characterised and computed (when $\KK$ is finite) the number of subspaces $W\in \G(V)$ in each of the six cases described in Definition \ref{def:sixGeneralCases}.
See Table \ref{sixCases}.
Now we fix $S,W\in \G(V)$ and compute the number of $T\in \G(S^\perp)$ such that $W\in \E_j(T)$ for each value of $j\in \{1,\ldots,6\}$.

\begin{definition}
\label{tripleCasesNotation}
For $S,W\in \G(V)$ and $1\leq j\leq 6$, let
\[ \C_j(S,W) := \{ T\in \G(S^\perp)\tq W\in \E_j(T)\}, \text{ and} \]
%\vspace{-\baselineskip}
\[ \mu^n_j(S,W) := |\C_j(S,W)|.\]
\end{definition}

We will see that the values $\mu^n_j(S,W)$ depend just on the index $i\in \{1,\ldots,6\}$ for which $W\in \E_i(S)$.
%That is, they depend on the six cases of Definition \ref{def:sixGeneralCases}.
So for the rest of the section, we will focus on computing these numbers $\mu^n_j(S,W)$, which we abbreviate $\mu^n_j$ when the pair $(S,W)$ is implicit.
We will leave the technical details of the infinite case for the reader and mainly focus on the finite case $\KK = \GF{q}$ to obtain explicit formulas for the $\mu^n_j$.

%For simplicity, in the proofs given in this section, we will usually assume that $V$ is the canonical symplectic space of dimension $2n$ and $S = E_1$.
From now on, $V$ is a symplectic space of dimension $2n$ over $\KK$ and we fix $S,W\in \G(V)$.
In order to avoid trivial cases, we assume that $n\geq 2$.
The case $n = 1$ leads to $\G(V) = \{V\}$, so most of the sets we are computing are empty.

We proceed now with the computation of the values $\mu_j^n(S,W)$ for the six cases corresponding to the possible relations between $S$ and $W$ as in Definition \ref{def:sixGeneralCases}.

\begin{lemma}
[Case (1)]
Let $n\geq 2$ and $S\in \G(V)$.
Then, for $S = W$, we get:
\begin{equation}
    \label{muCase1}
    \mu_j^n(S,W) = \begin{cases}
    0   & j \neq 4,\\
    d_n & j = 4.
    \end{cases}
\end{equation}
\end{lemma}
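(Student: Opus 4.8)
The plan is to exploit the fact that once $S=W$ the orthogonality condition defining the $T$'s becomes automatic. First I would observe that every candidate $T$ lies in $\G(S^\perp)$, so by definition $T\leq S^\perp$. Since we are in the case $S=W$, this reads $T\leq S^\perp=W^\perp$, i.e. $T\perp W$. Thus for each such $T$ the pair $(T,W)$ satisfies precisely the condition of item (4) of Definition \ref{def:sixGeneralCases}, so $W\in\E_4(T)$ for all $T\in\G(S^\perp)$.

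Next I would invoke that the six sets $\E_1(T),\ldots,\E_6(T)$ are pairwise disjoint, which is built into Definition \ref{def:sixGeneralCases}. Since $W\in\E_4(T)$ for every $T\in\G(S^\perp)$, this forces $W\notin\E_j(T)$ whenever $j\neq 4$. Hence $\C_j(S,W)=\emptyset$ for $j\neq 4$, giving $\mu^n_j(S,W)=0$ in those cases, while $\C_4(S,W)=\G(S^\perp)$ in its entirety.

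Finally I would identify the one remaining count, $\mu^n_4(S,W)=|\G(S^\perp)|$. As $S$ is non-degenerate of dimension $2$, Lemma \ref{lemmaBasicSymplecticSpaces}(6) gives that $S^\perp$ is a symplectic space of dimension $2(n-1)$, whose number of $2$-dimensional non-degenerate subspaces is $d_n$ — exactly the value recorded in Case (4) of Section 3, where $\E_4(S)=\G(S^\perp)$ has size $d_n$. There is no genuine obstacle here; the only point meriting a moment's care is confirming that $T\perp W$ places $(T,W)$ squarely in item (4) and nowhere else. This is immediate from disjointness of the six cases, and is further consistent with the dimension bookkeeping: since $T$ and $W=S$ are both non-degenerate, $T\perp W$ forces $T\cap W=0$, whence $\dim(T+W)=4$ and $T+W$ is non-degenerate, matching item (4) and excluding the others.
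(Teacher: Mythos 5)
Your argument is correct and is exactly the unpacking of what the paper dismisses as ``straightforward from the definitions'': every $T\in\G(S^\perp)$ satisfies $T\perp W$ when $W=S$, so disjointness of the six cases puts all such $T$ in $\C_4(S,W)=\G(S^\perp)$, whose size is $d_n$. No difference in approach; your extra dimension check is harmless but unnecessary.
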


\begin{proof}
This is straightforward from the definitions.
%since for every $T\in \F_1(S^\perp)$, $T+W = T+S$ is non-degenerate of dimension $4$ and $W\perp T$.
\end{proof}

Next, we characterise those $T\in \G(S^\perp)$ for which $W+T$ is degenerate.

\begin{proposition}
\label{propConditionsDegenerateTW}
Let $S, W\in\G(V)$ with $W\neq S$, and fix a symplectic basis $w,u$ of $W$.
Let $T\in \G(S^\perp)$.
Then $W+T$ is degenerate if and only if $T$ admits a basis $(x,y)$ such that the following conditions hold:
\begin{equation}
\label{conditionsDegenerateTW}
\begin{cases}
1 = \Psi(x,y),\\
\Psi(x,w) = 1 = \Psi(y,u),\\
\Psi(x,u) = 0 = \Psi(y,w).
\end{cases}
\end{equation}
Moreover, in that case, the ordered pair $(x,y)$ is unique.
\end{proposition}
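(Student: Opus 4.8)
The plan is to reduce the entire statement to a single $2\times 2$ determinant. Fix an arbitrary symplectic basis $(x_0,y_0)$ of $T$ and record the cross-Gram matrix
\[
C=\begin{pmatrix}\Psi(x_0,w)&\Psi(x_0,u)\\ \Psi(y_0,w)&\Psi(y_0,u)\end{pmatrix}.
\]
Conditions \eqref{conditionsDegenerateTW} say exactly that the basis $(x,y)$ has cross-Gram matrix equal to $\Id$. By Remark \ref{rk:symplecticBases} every symplectic basis of $T$ is obtained from $(x_0,y_0)$ by an element $N\in\SL_2(\KK)$, and one checks directly that the corresponding cross-Gram matrix is $NC$. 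Hence a basis as in \eqref{conditionsDegenerateTW} exists if and only if $NC=\Id$ for some $N\in\SL_2(\KK)$, i.e. if and only if $\det C=1$; and then $N=C^{-1}$ is forced, which gives at once the uniqueness of $(x,y)$. So it remains to prove that (for $T\neq W$) the space $W+T$ is degenerate if and only if $\det C=1$.

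To connect $\det C$ with degeneracy I would compute the Gram matrix $G$ of the generating tuple $(x_0,y_0,w,u)$ of $W+T$. Writing $J=\left(\begin{smallmatrix}0&1\\-1&0\end{smallmatrix}\right)$, it has the block shape $\left(\begin{smallmatrix}J&C\\-C^{\mathsf T}&J\end{smallmatrix}\right)$; using the elementary identity $C^{\mathsf T}JC=(\det C)\,J$ together with a Schur-complement expansion yields $\det G=(1-\det C)^2$. Thus $\det G=0$ precisely when $\det C=1$.

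It then remains to match $\det G=0$ with degeneracy, watching $\dim(W+T)$. If $W+T$ is degenerate then $\Psi$ has non-trivial radical on it, so the rank of $G$ is smaller than $\dim(W+T)\le 4$ and $\det G=0$, whence $\det C=1$ and the basis of \eqref{conditionsDegenerateTW} exists. For the converse I would avoid $\det G$ and argue directly: once $C=\Id$ is realised, a one-line check shows $x+u,\,y-w\in\Rad(W+T)$, and these vectors are non-zero exactly when $T\neq W$, so $W+T$ is indeed degenerate. (If $\dim(W+T)=3$ degeneracy is automatic, since an alternating form on an odd-dimensional space is singular.) The only real obstacle is the boundary case $T=W$: there $\det C=\det J=1$ and \eqref{conditionsDegenerateTW} is still solvable, yet $W+T=W$ is non-degenerate — consistently, both radical vectors $x+u$ and $y-w$ then vanish. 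This forces one to invoke $T\neq W$ in the converse; it costs nothing in the forward direction, since degeneracy of $W+T$ already implies $T\neq W$ as $W$ is non-degenerate.
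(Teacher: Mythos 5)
Your proof is correct, and although it turns on the same scalar as the paper's --- the quantity $\Psi(x,w)\Psi(y,u)-\Psi(x,u)\Psi(y,w)=\det C$, whose equality with $1$ is exactly the paper's Eq.~(\ref{degenerateSumWT}) --- the packaging is genuinely different and in one respect sharper. The paper proves uniqueness by a direct change-of-basis computation, obtains the degeneracy criterion by expanding the $4\times 4$ antisymmetric determinant of the linear system for a radical vector, and then writes down the solution $(x',y')$ explicitly and checks the five identities by hand; your $\SL_2$-torsor formulation (the cross-Gram matrix transforms as $C\mapsto NC$, so a solution of (\ref{conditionsDegenerateTW}) exists iff $\det C=1$, and then $N=C^{-1}$ is forced) delivers existence, uniqueness and the explicit formula in one stroke, while the Schur-complement identity $\det G=(1-\det C)^2$ replaces the determinant expansion. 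More importantly, your converse via the explicit radical vectors $x+u$ and $y-w$ repairs a step the paper leaves implicit: the claim that a \emph{non-zero} radical vector exists iff the $4\times4$ determinant vanishes is only valid when $w,u,x,y$ are linearly independent, and indeed for $T=W$ one has $\det C=1$ and a solvable system (\ref{conditionsDegenerateTW}) while $W+T=W$ is non-degenerate; so, as you observe, the hypothesis $T\neq W$ is genuinely needed for the ``if'' direction (in all of the paper's applications $T\neq W$ holds automatically, so nothing downstream is affected). One small imprecision: when $T\neq W$ it can still happen that $x+u=0$ (namely when $x=-u\in T\cap W$ is one-dimensional intersection), so the accurate statement is that $x+u$ and $y-w$ cannot \emph{both} vanish unless $T=W$; since a single non-zero radical vector suffices for degeneracy, your argument is unaffected.
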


\begin{proof}
We begin by proving the uniqueness.
Suppose that $x,y$ and $x',y'$ are two pairs satisfying the conditions of Eq. (\ref{conditionsDegenerateTW}).
Write $x' = ax+by$ and $y' = cx+dy$.
Then we have:
\[ \begin{cases}
 1 = \Psi(x',w) = a,\\
 1 = \Psi(y',u) = d,\\
 0 = \Psi(x',u) = b,\\
 0 = \Psi(y',w) = c.
\end{cases}\]
Hence $x'=x$ and $y'=y$.

Now we characterise when $W+T$ is degenerate in terms of $T$.
Take any symplectic basis $x,y$ of $T$.
Note that $x = x_S$ and $y = y_S$ since $T\perp S$.
Now, $W + T = \gen{w,u,x,y}$ is degenerate if and only if there exists $z = aw+bu+cx+dy \neq 0$ such that
\[\begin{cases}
0 = \Psi(z,w) = -b + c\Psi(x,w) + d\Psi(y,w),\\
0 = \Psi(z,u) = a + c\Psi(x,u) + d\Psi(y,u),\\
0 = \Psi(z,x) = -a\Psi(x,w) -b\Psi(x,u) - d,\\
0 = \Psi(z,y) = -a\Psi(y,w) -b\Psi(y,u) + c.
\end{cases}\]
Thus, this non-zero vector exists if and only if the determinant of the underlying matrix of the above system of equations is zero.
This determinant is:
\begin{equation*}
    \det\left(\begin{matrix} 0 & -1 & \Psi(x,w) & \Psi(y,w)\\
    1 & 0 & \Psi(x,u) & \Psi(y,u)\\
    -\Psi(x,w) & -\Psi(x,u) & 0 & -1\\
    -\Psi(y,w) & -\Psi(y,u) & 1 & 0
    \end{matrix}\right) = (1 - \Psi(x,w)\Psi(y,u) + \Psi(x,u)\Psi(y,w))^2.
\end{equation*}
Therefore, $W+T$ is degenerate if and only if
\begin{equation}
    \label{degenerateSumWT}
    1 = \Psi(x,w)\Psi(y,u) - \Psi(x,u)\Psi(y,w).
\end{equation}
%%%%% These commented lines contain the computation of the determinant
%\det\left( \begin{matrix} 1 & \Psi(x,u) & \Psi(y,u)\\
%    -\Psi(x,w) & 0 & -1\\
%    -\Psi(y,w) & 1 & 0
%    \end{matrix}\right) \\
%    \quad + \Psi(x,w) \det\left( \begin{matrix} 1 & 0 & \Psi(y,u)\\
%    -\Psi(x,w) & -\Psi(x,u) & -1\\
%    -\Psi(y,w) & -\Psi(y,u) & 0
%    \end{matrix}\right) - \Psi(y,w)\det\left( \begin{matrix} 1 & 0 & \Psi(x,u)\\
%    -\Psi(x,w) & -\Psi(x,u) & 0\\
%    -\Psi(y,w) & -\Psi(y,u) & 1
%    \end{matrix}\right)
% = \Psi(x,u)\Psi(y,w) + 1 - \Psi(x,w)\Psi(y,u) + \Psi(x,w)( - \Psi(y,u)\\
    %+ \Psi(x,w)\Psi(y,u)^2 - \Psi(x,u)\Psi(y,w)\Psi(y,u))\\
    %- \Psi(y,w)( - \Psi(x,u) + \Psi(x,u)\Psi(x,w)\Psi(y,u) - \Psi(x,u)^2\Psi(y,w) )\\
    %= 1 + 2\Psi(x,u)\Psi(y,w) - 2\Psi(x,w)\Psi(y,u) + \Psi(x,w)^2\Psi(y,u)^2 - 2\Psi(x,w)\Psi(x,u)\Psi(y,w)\Psi(y,u) + \Psi(x,u)^2\Psi(y,w)^2\\
%%%%%%%%%%%%%%%%%%%%%%%%%%%%%%%%%%%%
Define $x' := \Psi(y,u)x - \Psi(x,u)y$ and $y' := -\Psi(y,w)x + \Psi(x,w)y$.

Suppose that $W+T$ is degenerate.
Then Eq. (\ref{degenerateSumWT}) holds and
\[\begin{cases}
\Psi(x',y') = \Psi(x,w)\Psi(y,u) -\Psi(x,u)\Psi(y,w) = 1,\\
\Psi(x',w) = \Psi(y,u)\Psi(x,w) - \Psi(x,u)\Psi(y,w) = 1,\\
\Psi(x',u) = \Psi(y,u)\Psi(x,u) - \Psi(x,u)\Psi(y,u) = 0,\\
\Psi(y',w) = -\Psi(y,w)\Psi(x,w) + \Psi(x,w)\Psi(y,w) = 0,\\
\Psi(y',u) = -\Psi(y,w)\Psi(x,u) + \Psi(x,w)\Psi(y,u) = 1.
\end{cases}\]
Thus, the pair $(x',y')$ is a basis of $T$ and it satisfies the conditions of Eq. (\ref{conditionsDegenerateTW}).

Reciprocally, if $(x,y)$ is a pair satisfying the conditions of Eq. (\ref{conditionsDegenerateTW}), then Eq. (\ref{degenerateSumWT}) holds.
This finishes the proof of the proposition.
\end{proof}

The following corollary will be useful to count the number of subspaces $T$ for which $W+T$ is degenerate, in terms of counting elements in suitable orthogonal complements.

\begin{corollary}
\label{coroAlphaBeta}
Let $S,T,W$ be as in Proposition \ref{propConditionsDegenerateTW}.
Fix symplectic bases $e,f\in S$, $w,u\in W$ and $x,y\in T$.
Let $\alpha := -e + x$, $\beta := f+y$, $w' := f+w_S$ and $u' := e+u_S$.
Then $(x,y)$ satisfies the conditions of Eq. (\ref{conditionsDegenerateTW}) if and only if
\begin{equation}
    \label{alphaBetaRelations}
    \begin{cases}
    0 = \Psi(\alpha,\beta),\\
    0 = \Psi(\alpha,w'),\\
    0 = \Psi(\alpha,u'),\\
    0 = \Psi(\beta,w'),\\
    0 = \Psi(\beta,u').
    \end{cases}
\end{equation}
Reciprocally, if $\alpha,\beta\in V$ are such that $\alpha = -e+ \alpha_S$, $\beta = f+\beta_S$, and $\alpha,\beta$ verify the conditions in Eq. (\ref{alphaBetaRelations}), then $(x,y) = (\alpha_S,\beta_S)$ is a pair satisfying the conditions of Eq. (\ref{conditionsDegenerateTW}).

In particular, we see that $\alpha,\beta \in V$ satisfy Eq. (\ref{alphaBetaRelations}) with $\alpha = -e+\alpha_S$ and $\beta = f+\beta_S$, if and only if the following conditions hold:
\begin{equation}
    \label{alphaBetaOrthogonalComplements}
    \begin{cases}
    \alpha\in \gen{w',u',e}^\perp - \gen{f}^\perp, \quad \alpha_e = -1,\\
    \beta\in \gen{w',u',f,\alpha}^\perp - \gen{e}^\perp, \quad \beta_f = 1.
    \end{cases}
\end{equation}
\end{corollary}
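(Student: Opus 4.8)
The plan is to prove both equivalences by direct bilinear computation, exploiting that $x,y,w_S,u_S$ all lie in $S^\perp$ while $e,f$ span $S$ with $\Psi(e,f)=1$. The single identity that drives everything is that for any $v\in S^\perp$ one has $\Psi(v,w)=\Psi(v,w_S)$ and $\Psi(v,u)=\Psi(v,u_S)$, since the $S$-components $w-w_S$ and $u-u_S$ lie in $S=\gen{e,f}$ and are therefore annihilated by $v$; likewise $\Psi(v,e)=\Psi(v,f)=0$. These facts let me expand each of the five pairings appearing in Eq.~\eqref{alphaBetaRelations}.

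For the first equivalence I would substitute $\alpha=-e+x$, $\beta=f+y$, $w'=f+w_S$, $u'=e+u_S$ and evaluate. Each pairing collapses to a single term plus a constant:
\begin{equation*}
\Psi(\alpha,\beta)=\Psi(x,y)-1,\qquad \Psi(\alpha,w')=\Psi(x,w)-1,\qquad \Psi(\alpha,u')=\Psi(x,u),
\end{equation*}
\begin{equation*}
\Psi(\beta,w')=\Psi(y,w),\qquad \Psi(\beta,u')=\Psi(y,u)-1.
\end{equation*}
Comparing with Eq.~\eqref{conditionsDegenerateTW}, the five relations of \eqref{alphaBetaRelations} are then literally the five conditions $\Psi(x,y)=1$, $\Psi(x,w)=1$, $\Psi(x,u)=0$, $\Psi(y,w)=0$, $\Psi(y,u)=1$. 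Since this uses nothing about $x,y$ beyond $x,y\in S^\perp$, it settles both directions simultaneously: given $(x,y)$ satisfying \eqref{conditionsDegenerateTW} the vectors $\alpha,\beta$ satisfy \eqref{alphaBetaRelations}, and conversely any $\alpha=-e+\alpha_S$, $\beta=f+\beta_S$ satisfying \eqref{alphaBetaRelations} produce $(x,y)=(\alpha_S,\beta_S)$ satisfying \eqref{conditionsDegenerateTW} (the relation $\Psi(\alpha_S,\beta_S)=1$ ensuring $\gen{\alpha_S,\beta_S}\in\G(S^\perp)$).

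For the ``in particular'' reformulation I would recast the structural normalisations as orthogonality relations. Using $v_e=\Psi(v,f)$ and $v_f=-\Psi(v,e)$ from \eqref{generalNotation}, the requirement $\alpha=-e+\alpha_S$ with $\alpha_S\in S^\perp$ is equivalent to $\Psi(\alpha,e)=0$ (i.e.\ $\alpha_f=0$) together with $\alpha_e=\Psi(\alpha,f)=-1$. Combining $\alpha\perp e$ with the conditions $\Psi(\alpha,w')=\Psi(\alpha,u')=0$ gives exactly $\alpha\in\gen{w',u',e}^\perp$, while $\alpha_e=-1$ forces $\Psi(\alpha,f)\neq0$, that is $\alpha\notin\gen{f}^\perp$; this is the first line of \eqref{alphaBetaOrthogonalComplements}. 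With $\alpha$ fixed, the remaining conditions on $\beta$, namely $\Psi(\beta,w')=\Psi(\beta,u')=\Psi(\beta,\alpha)=0$ (the last being $\Psi(\alpha,\beta)=0$ up to sign) together with $\beta\perp f$ and $\beta_f=-\Psi(\beta,e)=1$, repackage as $\beta\in\gen{w',u',f,\alpha}^\perp$, $\beta\notin\gen{e}^\perp$, and $\beta_f=1$, which is the second line.

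The computations are routine, so I expect no conceptual obstacle; the only genuine care needed is bookkeeping of signs and of the index convention, since the subscript notation of \eqref{generalNotation} is set up relative to a generic symplectic basis $x,y$ of $S$, whereas here the basis of $S$ is $e,f$ and the symbols $x,y$ denote a basis of $T$. I would also flag that the clauses $\alpha\notin\gen{f}^\perp$ and $\beta\notin\gen{e}^\perp$ are logically implied by $\alpha_e=-1$ and $\beta_f=1$ respectively, and are retained only because the constraints are counted in this form in the next section.
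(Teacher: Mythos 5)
Your computation is correct and is exactly the routine verification the paper omits: the paper states this corollary without proof, treating it as an immediate expansion of the five pairings using $x,y,w_S,u_S\in S^\perp$ and $\Psi(e,f)=1$, which is precisely what you carry out (and your identities $\Psi(\alpha,\beta)=\Psi(x,y)-1$, $\Psi(\alpha,w')=\Psi(x,w)-1$, $\Psi(\alpha,u')=\Psi(x,u)$, $\Psi(\beta,w')=\Psi(y,w)$, $\Psi(\beta,u')=\Psi(y,u)-1$ all check out). Your remarks on the sign/index conventions of \eqref{generalNotation} and on the redundancy of the clauses $\alpha\notin\gen{f}^\perp$, $\beta\notin\gen{e}^\perp$ given the normalisations $\alpha_e=-1$, $\beta_f=1$ are also accurate.
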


Thus, in order to characterise the cases where the sum $W+T$ is degenerate, we can look for the solutions $\alpha,\beta$ in Eq. (\ref{alphaBetaOrthogonalComplements}) of Corollary \ref{coroAlphaBeta}.

Before we continue with the computation of the $\mu_j^n(S,W)$, we will need to characterise when $w\CT{T} u = 0$, in terms of the choices of $S,T,W$.

\begin{lemma}
\label{lem:circTCondition}
Let $S,T,W\in \G(V)$ and fix a symplectic basis $w,u$ of $W$.
Then the following are equivalent:
\begin{enumerate}
    \item $T\perp S$ and $w\CT{T}u= 0$,
    \item $T\perp S$ and $T\cap W^\perp \neq 0$,
    \item $T\perp S+W$, or $T$ contains a basis $x,y$ such that:
\begin{enumerate}
    \item $x\in S^\perp - W^\perp$,
    \item $y\in S^\perp \cap W^\perp  - \gen{x}^\perp$,
    \item $\Psi(x,y) = 1$.
\end{enumerate}
    \item $T\perp S$ and $W \in \E_4(T)\cup \E_5(T)$.
\end{enumerate}
Moreover, the number of pairs $(x,y)$ as in item (3)(a,b,c) is $|\KK|(|\KK|-1)$.
Thus, the total number of subspaces $T$ such that $T\perp S$, $T\not\perp W$ and $w\CT{T}u =0$ is equal to $\mu_5^n(S,W)$.
Hence, for a finite field $\KK = \GF{q}$ we have:
% \begin{equation}
%     \label{computationOfMu5Psi0}
%     \left(q^{2n-2} - q^{2n-\dim(S+W)}\right)\cdot \left( q^{2n-\dim(S+W)} - q^{2n-\dim(S+W)-1} \right) \frac{1}{q(q-1)^2},
% \end{equation}
% if $\Psi(\tilde{w},\tilde{u}) = 0$;
% and the value of $\mu_5^n(S,W)$ is
% \begin{equation}
%     \label{computationOfMu5PsiNot0}
%     \left(q^{2n-2} - q^{2n-\dim(S+W)}  - q^{2}+1 \right)\cdot q^{2n-\dim(S+W)-2} \frac{1}{(q-1)}
% \end{equation}
% if $\Psi(\tilde{w},\tilde{u}) \neq 0$.

\begin{equation}
%\label{computationOfMu5Psi0}
%\label{computationOfMu5PsiNot0}
\label{computationOfMu5}
\mu^n_5(S,W) = \frac{q^{2n-\dim(S+W)-2}}{(q-1)} \cdot \begin{cases}
\left(q^{2n-2} - q^{2n-\dim(S+W)}\right)    & \Psi(w_S,u_S) = 0,\\
\left(q^{2n-2} - q^{2n-\dim(S+W)}  - q^{2}+1 \right) & \Psi(w_S,u_S) \neq 0.
\end{cases}
\end{equation}
%For an infinite field $\KK$, $|\{ T\in \G(S^\perp) \tq T\not\perp W, w\CT{T}u=0\} |= \mu_5^n(S,W) = |\KK|$ is infinite as well.
\end{lemma}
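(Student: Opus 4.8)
The plan is to dispatch the four equivalences using one elementary computation together with the symmetry of orthogonal complements, and then to obtain the count by a dimension‑theoretic inclusion–exclusion. Fixing a symplectic basis $x,y$ of $T$ and unwinding Eq.~(\ref{generalNotation}) gives
\[ w\CT{T}u = \Psi(w,x)\Psi(u,y)-\Psi(w,y)\Psi(u,x) = \det\begin{pmatrix}\Psi(w,x) & \Psi(w,y)\\ \Psi(u,x) & \Psi(u,y)\end{pmatrix}. \]
This determinant vanishes exactly when the two rows are dependent, i.e.\ when some nonzero $aw+bu$ is orthogonal to both $x$ and $y$, hence to $T$. Thus $w\CT{T}u=0$ if and only if $W\cap T^\perp\neq 0$. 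Since $V$ is non-degenerate, $(A\cap B)^\perp=A^\perp+B^\perp$, $(A^\perp)^\perp=A$ and $\dim A^\perp = 2n-\dim A$, from which $\dim(W\cap T^\perp)=\dim(T\cap W^\perp)$; this rewrites the condition in the form of item (2), giving (1)$\Leftrightarrow$(2). For (1)$\Leftrightarrow$(4) I would read off Table~\ref{sixCases} with $T$ in the role of $S$ (recall $w\CT{T}u=1-\Psi(w_T,u_T)$): in cases $\E_1,\E_2,\E_3$ one has $\Psi(w_T,u_T)=0$, so $w\CT{T}u=1\neq 0$; in $\E_4$ the vectors $w,u$ lie in $T^\perp$, so $\Psi(w_T,u_T)=\Psi(w,u)=1$ and $w\CT{T}u=0$; $\E_5$ is by definition the locus $w\CT{T}u=0$ inside $\E_5\cup\E_6$, while $\E_6$ is $w\CT{T}u\neq 0$. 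Hence $w\CT{T}u=0 \Leftrightarrow W\in\E_4(T)\cup\E_5(T)$.

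For (2)$\Leftrightarrow$(3) I would split on whether $T\subseteq W^\perp$. If $T\subseteq W^\perp$, then together with $T\perp S$ this is exactly $T\perp S+W$, the first alternative of (3). Otherwise $T\not\perp W$ and $T\cap W^\perp$ has dimension exactly $1$; taking $y$ to span it and $x\in T$ with $\Psi(x,y)=1$ yields a basis of $T$ as in (3)(a,b,c), because $x,y\in T\subseteq S^\perp$, $y\in W^\perp$, and $x\notin\gen{y}=T\cap W^\perp$ forces $x\notin W^\perp$; the converse is immediate. The same bookkeeping gives the ``moreover'': for such a $T$ there are $q-1$ generators $y$ of the line $T\cap W^\perp$ and, for each, the solutions of $\Psi(x,y)=1$ in $T$ form a coset of $\gen{y}$, hence $q$ vectors $x$, all automatically outside $W^\perp$; so there are $q(q-1)=|\KK|(|\KK|-1)$ admissible ordered bases per $T$.

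By the equivalences, the subspaces counted by $\mu^n_5(S,W)$ are exactly the $\gen{x,y}$ arising from admissible pairs, so the count reduces to enumerating ordered pairs $(x,y)$ with $x\in S^\perp\setminus W^\perp$, $y\in U:=S^\perp\cap W^\perp$, $\Psi(x,y)=1$, and dividing by $q(q-1)$. For fixed $x$, the form $\Psi(x,\cdot)$ is nonzero on $U$ precisely when $x\notin U^\perp=S+W$, and then it has $q^{\dim U-1}=q^{2n-\dim(S+W)-1}$ solutions $y$ in $U$ (none otherwise). Hence the number of pairs is $q^{2n-\dim(S+W)-1}\cdot A$, where $A=|\{x\in S^\perp: x\notin W^\perp,\ x\notin S+W\}|$. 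I would compute $A$ by inclusion–exclusion from $|S^\perp|=q^{2n-2}$, $|S^\perp\cap W^\perp|=q^{2n-\dim(S+W)}$, $|S^\perp\cap(S+W)|=q^{\dim(S+W)-2}$ (using $S^\perp+(S+W)=V$), and $|S^\perp\cap W^\perp\cap(S+W)|=|(S+W)^\perp\cap(S+W)|=|\Rad(S+W)|$.

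The radical term is the crux, and is exactly where the two branches appear. By Remark~\ref{rk:radicalContainedInTilde}, $\Rad(S+W)=\Rad\gen{w_S,u_S}$, which is totally isotropic, of dimension $\dim(S+W)-2$, precisely when $\Psi(w_S,u_S)=0$; in that branch $S^\perp\cap(S+W)=\Rad(S+W)\subseteq W^\perp$ and the inclusion–exclusion collapses to $A=q^{2n-2}-q^{2n-\dim(S+W)}$. When $\Psi(w_S,u_S)\neq 0$ the sum is non-degenerate of dimension $4$, so $\Rad(S+W)=0$ and $A=q^{2n-2}-q^{2n-4}-q^2+1$. Multiplying by $q^{2n-\dim(S+W)-1}$ and dividing by $q(q-1)$ yields the two cases of Eq.~(\ref{computationOfMu5}); as a sanity check, $S=W$ falls in the first branch with $\dim(S+W)=2$ and gives $A=0$, matching $\mu^n_5=0$. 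The main obstacle throughout is organizational rather than conceptual: keeping the dimension counts uniform across the degenerate and non-degenerate configurations, so that a single formula indexed only by $\dim(S+W)$ and the vanishing of $\Psi(w_S,u_S)$ reproduces all six cases.
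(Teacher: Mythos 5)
Your proposal is correct and follows essentially the same route as the paper: establish the equivalences (with (1)$\Leftrightarrow$(4) read off Table \ref{sixCases} and (2)$\Leftrightarrow$(3) by splitting on $T\subseteq W^\perp$), count the $q(q-1)$ admissible ordered bases per subspace, enumerate all admissible pairs $(x,y)$, and divide. The only differences are cosmetic: you pass through $W\cap T^\perp$ and a duality $\dim(W\cap T^\perp)=\dim(T\cap W^\perp)$ where the paper exhibits the projections $w_xx+w_yy,\,u_xx+u_yy$ directly in $T\cap W^\perp$, and you package the count of valid $x$ as an inclusion–exclusion whose radical term produces the two branches, where the paper instead identifies the $q^2-1$ excluded vectors $x\in\gen{w_S,u_S}$ when $\Psi(w_S,u_S)\neq 0$.
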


\begin{proof}
Note that items (1) and (4) are equivalent by Table \ref{sixCases} (with $T$ in the role of $S$ there).
Further, if $T\perp W$ then $w\CT{T}u = 0$ and so items (1-4) hold.
Also $T\perp S$ if and only if $x,y\in S^\perp$ for any basis $x,y$ of $T$.
Thus, from now on, we assume that $T\perp S$ and $T\not\perp W$, and prove that items (1), (2) and (3) are equivalent.

Fix some basis $x,y$ of $T$ with $\Psi(x,y) = 1$.
Recall that $w = w_xx + w_yy + w_T$, and similarly $u = u_xx + u_yy + u_T$.
Since $T\not\perp W$, we have $w\neq w_T$ or $u\neq u_T$.
Then we have the following equivalences:
\begin{align*}
    w\CT{T}u = 0 & \Leftrightarrow w_xx+w_yy \text{ and } u_xx+u_yy \text{ are linearly dependent}\\
%    & \Leftrightarrow (\Psi(w,y),-\Psi(w,x)), (\Psi(u,y),-\Psi(u,x)) \text{ are l.d.}\\
    & \Leftrightarrow \Psi(w,y)\Psi(u,x) = \Psi(w,x)\Psi(u,y)\\
    & \Leftrightarrow 0 = \Psi(w, \Psi(u,x)y - \Psi(u,y)x) = \Psi(u, \Psi(w,y)x-\Psi(w,x)y)\\
    & \Leftrightarrow 0 = \Psi(w, u_xx+u_yy) = \Psi(u, w_xx+w_yy).
\end{align*}
Then, $w\CT{T} u = 0$ if and only if $w_xx+w_yy,u_xx+u_yy\in T\cap W^\perp$.
Since at least one of these vectors is non-zero, we conclude that $T\cap W^\perp \neq 0$ if and only if $w\CT{T}u=0$.
Thus, items (1) and (2) are equivalent.

Suppose now that item (2) holds.
We have that $\dim(T\cap W^\perp) = 1$ since $T\not\perp W$.
Then we can pick a pair of non-zero elements $(x,y)$ of $T$ such that $x\in T - W^\perp$ and $y\in T\cap W^\perp$.
After re-scaling, we suppose that $\Psi(x,y)=1$.
In particular, $y\notin \gen{x}^\perp$.
Then $(x,y)$ satisfies conditions (a,b,c) of item (3).
This shows that item (2) implies item (3).

Now we prove that item (3) implies item (1). That is, we show that under conditions (3)(a,b,c), we get $w\CT{T} u = 0$.
Since we are assuming $T\not\perp W$, there exists a basis $(x,y)$ of $T$ satisfying conditions (a,b,c).
Then $\Psi(w,y) = 0 = \Psi(u,y)$, from which we deduce that $w_x = 0 = u_x$, so $w\CT{T} u =0$.
This proves that item (3) implies item (1).
This finishes the proof of the equivalences.

% The In particular part is a straightforward consequence of the descriptions in items (a,b,c).
% We just show that Eq. (\ref{numberTWithCircCondition}) holds since the number of such bases $(x,y)$ inside a given $T$ is $q(q-1)$.
Next, we prove that the number of pairs $(x,y)$ inside $T$ satisfying (3)(a,b,c) is $|\KK|(|\KK|-1)$.
Suppose that $(x,y)$ and $(x',y')$ are two pairs of elements of $T$ satisfying (a,b,c).
Write $x' = ax+by$ and $y'=cx+dy$.
Then:
\begin{itemize}
    \item $\Psi(x',w) = a \Psi(x,w)$, $\Psi(x',u) = a \Psi(x,u)$. Then, by condition (a), $a\neq 0$.
    \item By condition (b), $0=\Psi(y',w) = c \Psi(x,w)$, $0 = \Psi(y',u) = c \Psi(x,u)$.
    Then, by condition (a), $c = 0$.
    \item By (c), $1 = \Psi(x',y') = ad-bc$.
\end{itemize}
We conclude that $c = 0$, $b\in\KK$ has no restrictions, $a\neq 0$, and $d = a^{-1}$.
This gives $|\KK|(|\KK|-1)$ possibilities for $(x',y')$.

Finally, we count the total number of pairs $(x,y)$ for subspaces $T$ as in (3)(a,b,c) and conclude the value of $\mu_5^n(S,W)$.
We leave the details of the infinite case for the reader and assume that $\KK = \GF{q}$ is finite.
We have
\[|S^\perp-W^\perp| = |S^\perp| - |(S+W)^\perp| = q^{2n-2} - q^{2n-\dim(S+W)}\]
possibilities for $x$.
Next, the number of possibilities for $y$ is
\[ \frac{|(S+W)^\perp| - | (S+W+\gen{x})^\perp|}{q-1} = \frac{q^{2n-\dim(S+W)} - q^{2n-\dim(S+W+\gen{x})}}{q-1},\]
where the denominator $q-1$ comes from the suitable re-scaling of $y$ needed for the condition $\Psi(x,y) = 1$.
Further, the number of possibilities for $y$ depends on whether $x\in S+W$ or not.
So we need to characterise when $x\in S+W$.
%For simplicity, we work with $S = E_1$, so $S+W = \gen{e_1,e_{n+1},\tilde{w},\tilde{u}}$.
Being $x\in S^\perp$, we have that $x\in S+W = S\oplus \gen{w_S,u_S}$ if and only if $x \in \gen{w_S,u_S}$.
But if $x = aw_S+bu_S$, it has to verify that $x\notin W^\perp$.
This means that either $0\neq \Psi(x,w) = b\Psi(u_S,w_S)$ or $0\neq \Psi(x,u) = a\Psi(w_S,u_S)$.
Then, this choice of $x$ is possible if and only if $\Psi(w_S,u_S)\neq 0$, and in that case we have $|\gen{w_S,u_S}|-1=q^2-1$ possibilities for $x$.
From this observation, it is straightforward to verify that the value of $\mu_5^n$ is as indicated in (\ref{computationOfMu5}).
Note that if $x\in S+W$, then $S^\perp \cap W^\perp - \gen{x}^\perp = \emptyset$, so there is no possible choice for $y$.
\end{proof}

The following lemma is also straightforward:

\begin{lemma}
\label{lm:mu4}
For any $S,W\in\G(V)$ we have that $\mu_4(S,W) = |\G( (S+W)^\perp)|$.
\end{lemma}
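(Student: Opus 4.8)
The plan is to prove the identity by unfolding the definitions and identifying the set $\C_4(S,W)$ with the vertex set of $\G((S+W)^\perp)$; no real estimation or construction is needed, only a chain of equivalences. Recall from Definition \ref{tripleCasesNotation} that $\mu_4(S,W) = |\C_4(S,W)|$, where $\C_4(S,W) = \{T \in \G(S^\perp) : W \in \E_4(T)\}$. So it suffices to show that $\C_4(S,W)$ and $\G((S+W)^\perp)$ are literally the same set of $2$-dimensional non-degenerate subspaces of $V$.

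First I would translate the two membership conditions into orthogonality relations. On the one hand, $T \in \G(S^\perp)$ means precisely that $T$ is a $2$-dimensional non-degenerate subspace of $V$ contained in $S^\perp$, i.e. $T \perp S$. On the other hand, by Case (4) of the previous section we have $\E_4(T) = \G(T^\perp)$ (that is, the analysis of Definition \ref{def:sixGeneralCases}(4) applied with $T$ in the role of $S$), so $W \in \E_4(T)$ if and only if $W \perp T$. Since orthogonality is symmetric, combining the two conditions gives that $T \in \C_4(S,W)$ if and only if $T$ is a $2$-dimensional non-degenerate subspace with $T \perp S$ and $T \perp W$. As $S + W$ is spanned by $S$ and $W$, these two orthogonality relations hold simultaneously exactly when $T \perp (S+W)$, i.e. $T \leq (S+W)^\perp$.

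It then remains to identify $\{T \in \G(V) : T \leq (S+W)^\perp\}$ with $\G((S+W)^\perp)$. The only point deserving a word of care is that $(S+W)^\perp$ need not be a non-degenerate symplectic space when $S+W$ is degenerate; however, the orthogonality graph $\G(\,\cdot\,)$ was defined for arbitrary $r$-degenerate symplectic spaces, so $\G((S+W)^\perp)$ is meaningful. The key observation is that non-degeneracy of a subspace $T$ is an intrinsic property of the restriction of $\Psi$ to $T$ and does not depend on the ambient space. Hence a $2$-dimensional subspace $T \leq (S+W)^\perp$ is a vertex of $\G(V)$ if and only if it is a vertex of $\G((S+W)^\perp)$, giving $\C_4(S,W) = \G((S+W)^\perp)$ and therefore $\mu_4(S,W) = |\G((S+W)^\perp)|$. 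I do not anticipate any genuine obstacle here; the statement is purely a matter of bookkeeping, and the only substantive check is the intrinsic characterisation of non-degeneracy just described (which also legitimises writing the answer for the finite case via Lemma \ref{lemmaNr2dimNDSubspaces} once $\dim(S+W)$ and $\dim(\Rad((S+W)^\perp))$ are known).
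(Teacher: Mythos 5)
Your argument is correct and is exactly the unwinding of definitions that the paper has in mind when it calls this lemma straightforward (and accordingly omits the proof): $T\in\C_4(S,W)$ iff $T\perp S$ and $T\perp W$, i.e.\ $T\leq (S+W)^\perp$, and non-degeneracy of $T$ is intrinsic, so $\C_4(S,W)=\G((S+W)^\perp)$. Your explicit remark that $\G(\,\cdot\,)$ is defined for possibly degenerate spaces is the right point to flag; nothing further is needed.
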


Now we have all the necessary tools to finish the computation of the values of $\mu_j^n(S,W)$.

\begin{proposition}
[Case (2)]
Suppose that $W\in \E_2(S)$.
Then, for $n\geq 2$, we have:
\begin{equation}
    \label{muCase2}
    \begin{cases}
    \mu_1^n(S,W) = 0 = \mu_2^n(S,W) = \mu_3^n(S,W) = \mu_6^n(S,W),\\
    \mu_4^n(S,W) = d_{n-1} \cdot |\KK|^2,\\
    \mu_5^n(S,W) = |\KK|^{4n-8}.
    \end{cases}
\end{equation}
\end{proposition}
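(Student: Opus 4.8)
The plan is to first unpack the meaning of $W\in\E_2(S)$ and then exploit a single observation that forces every $T\perp S$ into cases (4) or (5). By the Case (2) description in Table~\ref{sixCases}, fix a symplectic basis $w,u$ of $W$ with $\Psi(w,u)=1$, $w_S\neq 0$ and $u_S=0$. The condition $u_S=0$ says precisely that $u\in S$, while $w_S\neq 0$ gives $\dim(S+W)=3$ with $S+W=S\oplus\gen{w_S}$ and $\Rad(S+W)=\gen{w_S}$ by Remark~\ref{rk:radicalContainedInTilde}.

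The key first step is to show that $w\CT{T}u=0$ for \emph{every} $T\in\G(S^\perp)$. Indeed, if $T\perp S$ then $u\in S\subseteq T^\perp$, so for any symplectic basis $x,y$ of $T$ one has $u_x=\Psi(u,y)=0$ and $u_y=-\Psi(u,x)=0$; hence $w\CT{T}u=w_xu_y-w_yu_x=0$. By the equivalence of items (1) and (4) in Lemma~\ref{lem:circTCondition}, this gives $W\in\E_4(T)\cup\E_5(T)$ for \emph{all} $T\in\G(S^\perp)$. Since the sets $\E_1(T),\ldots,\E_6(T)$ partition $\G(V)$, this immediately yields $\mu^n_1=\mu^n_2=\mu^n_3=\mu^n_6=0$, and moreover $\mu^n_4+\mu^n_5=|\G(S^\perp)|=d_n$, which I will use as a consistency check at the end.

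Next I would compute $\mu^n_5$ directly from formula~(\ref{computationOfMu5}) of Lemma~\ref{lem:circTCondition}. Here $\dim(S+W)=3$ and $\Psi(w_S,u_S)=0$ (since $u_S=0$), so the first branch applies and gives $\mu^n_5=\frac{q^{2n-5}}{q-1}\bigl(q^{2n-2}-q^{2n-3}\bigr)=q^{4n-8}=|\KK|^{4n-8}$. For $\mu^n_4$ I would invoke Lemma~\ref{lm:mu4}, so that $\mu^n_4=|\G((S+W)^\perp)|$. Since $S+W$ is $1$-degenerate of dimension $3$ with radical $\gen{w_S}$, parts (3) and (4) of Lemma~\ref{lemmaBasicSymplecticSpaces} give $\dim((S+W)^\perp)=2n-3$ and $\Rad((S+W)^\perp)=(S+W)^\perp\cap\bigl((S+W)^\perp\bigr)^\perp=(S+W)^\perp\cap(S+W)=\Rad(S+W)=\gen{w_S}$. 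Thus $(S+W)^\perp$ is a $1$-degenerate symplectic space of dimension $2(n-2)+1$, and applying Lemma~\ref{lemmaNr2dimNDSubspaces} with $r=1$ and parameter $n-2$ yields $\mu^n_4=d_{n-1}\cdot|\KK|^2$. I would then verify the identity $d_{n-1}|\KK|^2+|\KK|^{4n-8}=d_n$ to confirm agreement with the partition count.

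The two explicit substitutions are routine. The genuine insight is the vanishing $w\CT{T}u=0$, which collapses four of the six cases at once; the only step requiring real care is the identification $\Rad((S+W)^\perp)=\Rad(S+W)$, since a wrong value of the degeneracy $r$ of $(S+W)^\perp$ would throw off the power of $|\KK|$ in $\mu^n_4$. That equality is secured by combining $\bigl((S+W)^\perp\bigr)^\perp=S+W$ (Lemma~\ref{lemmaBasicSymplecticSpaces}(4)) with the general fact $U\cap U^\perp=\Rad(U)$, so it is exactly the place where a dimension or degeneracy slip would be easiest to make.
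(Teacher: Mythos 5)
Your proposal is correct and follows essentially the same route as the paper: the key observation that $u\in S\leq T^\perp$ forces $w\CT{T}u=0$ for every $T\in\G(S^\perp)$, which via Lemma \ref{lem:circTCondition} kills cases $1,2,3,6$ at once, followed by Lemma \ref{lm:mu4} with Lemma \ref{lemmaNr2dimNDSubspaces} for $\mu_4^n$ and formula (\ref{computationOfMu5}) for $\mu_5^n$. Your extra care in identifying $\Rad((S+W)^\perp)=\Rad(S+W)$ and the consistency check $d_{n-1}|\KK|^2+|\KK|^{4n-8}=d_n$ are sound additions but do not change the argument.
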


\begin{proof}
By Table \ref{sixCases}, there exists a basis $w,u$ of $W$ such that $\Psi(w,u) = 1$ and $w_S\neq 0 = u_S$.
Let $T\in \G(S^\perp)$.
Then $u\in S\leq T^\perp$ and thus $u_T=u\neq 0$.
In particular, $w\CT{T}u=1-\Psi(w_T,u_T) = 1-\Psi(w,u)=0$.
By the equivalences in Lemma \ref{lem:circTCondition} we conclude that $W \in \E_4(T)\cup \E_5(T)$, so
\[ \mu_1^n(S,W) = 0 = \mu_2^n(S,W) = \mu_3^n(S,W) = \mu_6^n(S,W).\]

On the other hand, by Lemma \ref{lm:mu4}, $\mu_4^n(S,W) = |\G( (S+W)^\perp )|$.
Since $\dim(S+W)=3$, $(S+W)^\perp$ is a $1$-degenerate symplectic space of dimension $2n-3$.
Thus Lemma \ref{lemmaNr2dimNDSubspaces} gives the value of $\mu_4^n(S,W)$.

Now we compute $\mu_5^n(S,W)$.
Since we already have $w\CT{T}u=0$ for $T\in\G(S^\perp)$, if we additionally require $T\not\perp W$, then by Lemma \ref{lem:circTCondition} we conclude that $\mu_5^n(S,W) = |\KK|^{4n-8}$, which also contemplates the case of infinite $\KK$.
\end{proof}

\begin{proposition}
[{Case (3)}]
\label{propCase3STW}
Let $S\in \G(V)$ and $W\in \E_3(S)$.
For $n\geq 3$, we have:
\begin{equation}
    \label{muCase3}
    \begin{cases}
    \mu_1^n(S,W) = 0 = \mu_2^n(S,W),\\
    \mu_3^n(S,W) = |\KK|^{4n-9},\\
    \mu_4^n(S,W) = d_{n-2} \cdot |\KK|^4,\\
    \mu_5^n(S,W) = |\KK|^{4n-10}(|\KK|+1),\\
    \mu_6^n(S,W) = |\KK|^{4n-9} (|\KK|-2).
    \end{cases}
\end{equation}
If $n = 2$, this case cannot arise and the above values are not defined.
\end{proposition}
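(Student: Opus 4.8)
The plan is to exploit that in Case~(3) the subspace $W$ meets $S^\perp$ trivially, which puts every $T\in\G(S^\perp)$ in general position with $W$. Indeed, let $w,u$ be the basis of $W$ provided by Table~\ref{sixCases}, so that $w_S,u_S$ are linearly independent with $\Psi(w_S,u_S)=0$. Since the cross terms vanish, $\Psi(w-w_S,u-u_S)=\Psi(w,u)-\Psi(w_S,u_S)=1$, so $w-w_S,u-u_S$ is a symplectic basis of $S$ and the projection $W\to S$, $v\mapsto v-v_S$, is an isomorphism. Hence $W\cap S^\perp=0$, and for every $T\in\G(S^\perp)$ we get $T\cap W\subseteq S^\perp\cap W=0$, so $\dim(T+W)=4$. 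This already gives $\mu_1^n(S,W)=\mu_2^n(S,W)=0$: we can have neither $T=W$ (as $T\perp S$ while $W\not\perp S$) nor $\dim(T+W)=3$. Throughout I assume $\KK=\GF{q}$; the infinite case is analogous.

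For $\mu_4^n(S,W)$ I would use Lemma~\ref{lm:mu4} to write $\mu_4^n=|\G((S+W)^\perp)|$ and then determine the isometry type of $(S+W)^\perp$. As $S+W=S\oplus\gen{w_S,u_S}$ with $\gen{w_S,u_S}$ totally isotropic, $\Rad(S+W)=\gen{w_S,u_S}$ is $2$-dimensional; since $V$ is non-degenerate, Lemma~\ref{lemmaBasicSymplecticSpaces}(4) gives $((S+W)^\perp)^\perp=S+W$, whence $\Rad((S+W)^\perp)=(S+W)^\perp\cap(S+W)=\gen{w_S,u_S}$ as well. Thus $(S+W)^\perp$ is a $2$-degenerate symplectic space of dimension $2n-4$, and Lemma~\ref{lemmaNr2dimNDSubspaces} yields $\mu_4^n=d_{n-2}\cdot q^4$. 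For $\mu_5^n(S,W)$ I would feed $\dim(S+W)=4$ and $\Psi(w_S,u_S)=0$ into the first branch of Eq.~(\ref{computationOfMu5}), which after simplification (using $q^2-1=(q-1)(q+1)$) gives $q^{4n-10}(q+1)$.

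The crux is $\mu_3^n(S,W)$, the number of $T\in\G(S^\perp)$ with $T+W$ degenerate. By Proposition~\ref{propConditionsDegenerateTW} these $T$ are in bijection with the unique pairs $(x,y)$ satisfying Eq.~(\ref{conditionsDegenerateTW}), and by Corollary~\ref{coroAlphaBeta} with the pairs $(\alpha,\beta)$ solving Eq.~(\ref{alphaBetaOrthogonalComplements}). I would count these in two stages. First I count the $\alpha\in\gen{w',u',e}^\perp$ with $\alpha_e=-1$: one checks that $w',u',e$ are linearly independent and that $f\notin\gen{w',u',e}$, so $\Psi(\cdot,f)$ is a non-trivial functional on the $(2n-3)$-dimensional complement, giving exactly $q^{2n-4}$ admissible $\alpha$. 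The delicate step is the second stage: for each such $\alpha$ I must show that $w',u',f,\alpha$ are still linearly independent and that $e\notin\gen{w',u',f,\alpha}$, so that the number of $\beta\in\gen{w',u',f,\alpha}^\perp$ with $\beta_f=1$ is the \emph{constant} $q^{2n-5}$, independent of $\alpha$. Both assertions follow from the relations $\Psi(x,w_S)=1$ and $\Psi(x,u_S)=0$ that $\alpha\in\gen{w',u',e}^\perp$ forces on $x=\alpha_S$: were $\alpha\in\gen{w',u',f}$ one would be driven to $\Psi(x,w_S)=0$, a contradiction, and the same relations obstruct $e$ from the enlarged span. Multiplying gives $\mu_3^n=q^{2n-4}\cdot q^{2n-5}=q^{4n-9}$. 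Establishing this uniformity across all admissible $\alpha$ is the step I expect to demand the most care.

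Finally I would obtain $\mu_6^n(S,W)$ by complementation. Each $T\in\G(S^\perp)$ places $W$ in exactly one class $\E_j(T)$ and $|\G(S^\perp)|=d_n$, so $\sum_j\mu_j^n=d_n$ and
\[\mu_6^n = d_n-\mu_3^n-\mu_4^n-\mu_5^n = d_n - q^{4n-9} - d_{n-2}q^4 - q^{4n-10}(q+1).\]
Expanding $d_n$ and $d_{n-2}q^4$ over the common denominator $q^2-1$ and using $(q^4-1)/(q^2-1)=q^2+1$ collapses the fractional part to $q^{4n-8}+q^{4n-10}$, so $\mu_6^n=q^{4n-9}(q-2)$, as claimed. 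The hypothesis $n\geq 3$ enters because Case~(3) requires the $2$-dimensional totally isotropic subspace $\gen{w_S,u_S}$ to sit inside the $(2n-2)$-dimensional $S^\perp$; for $n=2$ one would have $S+W=V$ non-degenerate, so the case is vacuous.
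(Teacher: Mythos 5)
Your proposal is correct and follows essentially the same route as the paper: $\mu_2^n=0$ via the linear independence of $w-w_S,u-u_S$ (which you package as $W\cap S^\perp=0$), the two-stage $\alpha$/$\beta$ count through Corollary \ref{coroAlphaBeta} giving $q^{2n-4}\cdot q^{2n-5}$ for $\mu_3^n$, Lemmas \ref{lm:mu4} and \ref{lemmaNr2dimNDSubspaces} for $\mu_4^n$, Eq. (\ref{computationOfMu5}) for $\mu_5^n$, and complementation for $\mu_6^n$. The uniformity of the $\beta$-count that you flag as delicate is handled in the paper by exactly the dimension checks you outline, so there is no gap.
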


\begin{proof}
We suppose first that $n\geq 3$.
Clearly $\mu_1^n(S,W) = 0$ since $W\not\perp S$.

On the other hand, by Table \ref{sixCases}, there exists a basis $w,u$ of $W$ such that $\Psi(w,u) = 1$, $w_S,u_S$ are linearly independent, and $\Psi(w_S,u_S) = 0$.

We prove that $W+T$ always has dimension $4$ for $T\in\G(S^\perp)$.
Pick any basis $x,y$ of $T$, so $W+T = \gen{w,u,x,y}$.
Take a linear combination $0 = aw + bu+cx+dy$.
We show that $a=b=c=d=0$.
The projection onto $S$ gives $0 = a(w-w_S) + b(u-u_S)$.
Since $1 = \Psi(w,u) = \Psi(w_S,u_S) + w\CT{S} u = w\CT{S} u$,
%which is the determinant of the matrix with columns the coordinates of the vectors $w-w_S$ and $u-u_S$ written in a suitable basis of $S$.
$w-w_S$ and $u-u_S$ are linearly independent and so $a=0=b$.
Then also $c=d=0$ since $x,y$ are linearly independent.
Therefore $\dim(W+T) = 4$.
In particular, we have shown that $\mu_2^n(S,W) = 0$, and $\mu_3^n(S,W)$ is the number of $T\perp S$ such that $W+T$ is degenerate.

Next, in order to compute $\mu_3^n(S,W)$, we are going to count the number of solutions $\alpha,\beta$ of Eq. (\ref{alphaBetaOrthogonalComplements}) in Corollary \ref{coroAlphaBeta}.
Fix a basis $S = \gen{e,f}$ with $\Psi(e,f)=1$.
Recall that $w' = f+w_S$ and $u'=e+u_S$.
We assume that $\KK$ is finite and leave the details of the infinite case to the reader.

\bigskip
\textbf{Claim 1.} There are $|\KK|^{2n-4}$ solutions for $\alpha$.

\begin{proof}
We have that $\alpha\in \gen{w',u',e}^\perp - \gen{f}^\perp$ with $\alpha_e = -1$.
Thus the number of such $\alpha$ is equal to:
\[ \frac{1}{|\KK|-1}|\gen{w',u',e}^\perp - \gen{f}^\perp| = \frac{1}{|\KK|-1}\left( |\gen{w',u',e}^\perp| - |\gen{w',u',e,f}^\perp|\right).\]
We compute the dimensions of these subspaces.
First, $\gen{w',u',e} = \gen{f+w_S,u_S,e}$ has dimension $3$ since $u_S\neq 0$.
Thus, $|\gen{w',u',e}^\perp| = |\KK|^{2n-3}$.
Second, $\gen{w',u',e,f} = \gen{w_S,u_S,e,f}=S+W$, which has dimension $4$ by hypothesis.
Then $|\gen{w',u',e,f}^\perp| = |\KK|^{2n-4}$.
We conclude that the number of such $\alpha$ is $|\KK|^{2n-4}$.
\end{proof}

\bigskip
\textbf{Claim 2.} For each $\alpha$ satisfying Eq. (\ref{alphaBetaOrthogonalComplements}), there are $|\KK|^{2n-5}$ solutions for $\beta$.

\begin{proof}
We have that $\beta \in \gen{w',u',f,\alpha}^\perp - \gen{e}^\perp$ with $\beta_f = 1$.
Similar as in the previous case for $\alpha$, the number of solutions for $\beta$ is
\[ \frac{1}{|\KK|-1}|\gen{w',u',f,\alpha}^\perp - \gen{e}^\perp| = \frac{1}{|\KK|-1}\left( |\gen{w',u',f,\alpha}^\perp| - |\gen{w',u',e,f,\alpha}^\perp|\right).\]
Observe that $\gen{w',u',f,\alpha}$ has dimension $3$ if and only if ${\alpha} \in \gen{w',u',f}$.
However, this cannot hold since $\alpha \in \gen{w',u',e}^{\perp}$ and $\alpha_e\neq 0$.
Therefore, $|\gen{w',u',f,\alpha}^\perp|=|\KK|^{2n-4}$.

Next,
\[ \gen{w',u',e,f,\alpha} = \gen{w_S,u_S,e,f,\alpha_S} = \gen{w_S,u_S,\alpha_S}\oplus \gen{e,f}.\]
%Recall that $\tilde{w},\tilde{u}$ are linearly independent.
This subspace has dimension $5$ if and only if $\alpha_S\notin \gen{w_S,u_S}$, which must hold by definition of $\alpha$.
Therefore, $\gen{w',u',e,f,\alpha,}$ has dimension $5$ and
\[ |\gen{w',u',e,f,\alpha}^\perp| = |\KK|^{2n-5}.\]
This gives a total of $|\KK|^{2n-5}$ solutions for $\beta$ after fixing $\alpha$.
\end{proof}

By Claims 1 and 2, we conclude that the number of $T\perp S$ such that $W+T$ is degenerate is $|\KK|^{2n-4}\cdot |\KK|^{2n-5}=|\KK|^{4n-9}$.
In consequence, $\mu_3^n(S,W) = |\KK|^{4n-9}$.

Finally, since $\dim(S+W) = 4$ and $S+W$ is degenerate, $(S+W)^\perp$ is a $2$-degenerate symplectic space of dimension $2n-4=2(n-3)+2$.
This shows that
\[ \mu_4^n(S,W) = |\G( (S+W)^\perp)| = d_{n-2,2} = d_{n-2}\cdot |\KK|^4. \]
The value of $\mu_5^n(S,W)$ follows by Eq. (\ref{computationOfMu5}) in Lemma \ref{lem:circTCondition} since $\Psi(w_S,u_S)=0$.
The value of $\mu_6^n(S,W)$ follows by taking complements.

To finish the proof of this proposition, note that if $n=2$ and $\dim(S+W) = 4$, then $S+W = V$ which is non-degenerate.
Therefore, Case (3) does not arise in dimension $2n=4$ (cf. the value of $c_2$ in Table \ref{sixCases}).
\end{proof}

We state the remaining cases (4), (5) and (6) only for finite $\KK$.
Similar formulas can be obtained for infinite fields but they involve more elaborated arguments and we will not need them in this article.

\begin{proposition}
[{Case (4)}]
\label{propCase4STW}
Let $S\in \G(V)$ and $W\in \E_4(S) = \G(S^\perp)$.
For $n\geq 2$ and $\KK$ finite we have:
\begin{equation}
    \label{muCase4}
    \begin{cases}
    \mu_1^n(S,W) = 1,\\
    \mu_2^n(S,W) = b_{n-1} = (|\KK|^{2n-4}-1)(|\KK|+1),\\
    \mu_3^n(S,W) = c_{n-1} = |\KK|(|\KK|^{2n-4}-1)(|\KK|^{2n-6}-1),\\
    \mu_4^n(S,W) = d_{n-1},\\
    \mu_5^n(S,W) = e^1_{n-1} = |\KK|^{2n-6}(|\KK|^{2n-4}-1)(|\KK|+1),\\
    \mu_6^n(S,W) = e^2_{n-1} = |\KK|^{2n-5}(|\KK|^{2n-4}-1)(|\KK|-2).
\end{cases}
\end{equation}
\end{proposition}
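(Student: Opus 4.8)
The strategy is to recognise that everything takes place inside the non-degenerate subspace $S^\perp$ and then to quote the counts of Section 3 for a symplectic space of dimension $2(n-1)$. Since $W\in\E_4(S)$ means $S\perp W$ and $S$ is non-degenerate, Lemma \ref{lemmaBasicSymplecticSpaces}(6) gives $V=S\oplus V'$ with $V':=S^\perp$ a symplectic space of dimension $2(n-1)$, and $W\in\G(V')$. Every $T$ contributing to $\mu_j^n(S,W)$ lies in $\G(S^\perp)=\G(V')$, so $T$ and $W$ are both vertices of $\G(V')$.

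The first point to check is that the relation ``$W\in\E_j(T)$'' depends only on $T$, $W$ and the restriction of $\Psi$ to $T+W\subseteq V'$, and hence is the same whether read off in $V$ or in $V'$. Indeed, the data distinguishing the six cases of Definition \ref{def:sixGeneralCases} are $\dim(T+W)$, the degeneracy of $T+W$ (equivalently whether $\Rad(T+W)=\{v\in T+W : \Psi(v,T+W)=0\}$ vanishes), the orthogonality $T\perp W$, and the scalar $w\CT{T}u$. The first three are manifestly intrinsic to $T+W$; for the last I would use the identity $w\CT{T}u=w_xu_y-w_yu_x$ from (\ref{generalNotation}), whose right-hand side only involves the values of $\Psi$ on $w,u$ and a symplectic basis $x,y$ of $T$, all lying in $V'$. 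Thus $w\CT{T}u$ is unchanged when passing to $V'$, even though $T^\perp$ itself shrinks. Consequently $\mu_j^n(S,W)=|\{T\in\G(V') : W\in\E_j^{V'}(T)\}|$, where $\E_j^{V'}$ denotes the relation computed inside $V'$.

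To evaluate this last quantity I would run a double count inside $V'$, which avoids analysing the (asymmetric) condition $w\CT{T}u=0$ from $W$'s side. Isometries of $V'$ preserve each relation $\E_j^{V'}$, and by Witt's lemma (Lemma \ref{lemmaBasicSymplecticSpaces}(7)) the group $\Sp(V')$ acts transitively on $\G(V')$; hence $N_j:=|\{T : W\in\E_j^{V'}(T)\}|$ does not depend on $W$. Counting the ordered pairs $(T,W)$ with $W\in\E_j^{V'}(T)$ in two ways yields
\begin{equation*}
|\G(V')|\cdot N_j=\sum_{W}|\{T : W\in\E_j^{V'}(T)\}|=\sum_{T}|\E_j^{V'}(T)|=|\G(V')|\cdot|\E_j^{V'}|,
\end{equation*}
where $|\E_j^{V'}|$ is the constant size recorded in Table \ref{sixCases} (independent of the chosen vertex); therefore $N_j=|\E_j^{V'}|$.

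Finally I would read the third column of Table \ref{sixCases} with $n$ replaced by $n-1$, since $\dim V'=2(n-1)$, obtaining $\mu_1^n=1$, $\mu_2^n=b_{n-1}$, $\mu_3^n=c_{n-1}$, $\mu_4^n=d_{n-1}$, $\mu_5^n=e^1_{n-1}$ and $\mu_6^n=e^2_{n-1}$; rewriting $c_{n-1}=|\KK|(|\KK|^{2n-4}-1)(|\KK|^{2n-6}-1)$ and the remaining entries in the stated form gives the proposition. The step I expect to be the main obstacle is the intrinsic reduction of the second paragraph: one must be careful that $w\CT{T}u$ and $\Rad(T+W)$ really are insensitive to the ambient space, even though the perpendiculars $T^\perp$ and $(T+W)^\perp$ nominally used to define them are not; the determinantal expression for $w\CT{T}u$ is exactly what makes this work. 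As a consistency check, $\sum_{j}\mu_j^n=|\G(V')|=d_n$ recovers the partition of $\G(V')$ into the six cases.
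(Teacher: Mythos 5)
Your proposal is correct and takes essentially the same route as the paper, which simply observes that one may work inside the symplectic space $S^\perp$ of dimension $2(n-1)$ and read off the third column of Table \ref{sixCases} with $n-1$ in place of $n$. The two points you flag and justify --- that the relations $\E_j$ are intrinsic to $T+W$, and the transitivity-plus-double-counting step resolving the asymmetry between counting $T$ with $W\in\E_j(T)$ and counting $W\in\E_j(T)$ for fixed $T$ --- are left implicit in the paper's one-line argument (the latter could equivalently be settled by noting each relation is symmetric, e.g.\ via $\dim(T\cap W^\perp)=\dim(W\cap T^\perp)$), so your write-up is a more careful rendering of the same proof.
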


\begin{proof}
Since $W\in \G(S^\perp)$, we can directly work in the orthogonal complement of $S$.
So the values of $\mu_j^n(S,W)$ correspond to the third column of Table \ref{sixCases} with $n-1$ in the role of $n$ there.
Note that these values are consistent in the case $n = 2$.
This concludes the proof of this case.
\end{proof}

\begin{proposition}
[{Cases (5) and (6)}]
\label{propCase5STW}
Let $S\in \G(V)$ and $W\in \E_5(S) \cup \E_6(S)$.
Then for $n\geq 2$ and $\KK$ finite we have:
\begin{equation}
    \label{muCases5and6}
    \begin{cases}
    \mu_1^n(S,W) = 0,\\
    \mu_2^n(S,W) = \begin{cases}
    |\KK|^{2n-4} & W\in \E_5(S),\\
    0 & W\in \E_6(S),\\
    \end{cases}\\
    \mu_3^n(S,W) = |\KK|^{2n-5}(|\KK|^{2n-4}-1),\\
    \mu_4^n(S,W) = d_{n-1},\\
    \mu_5^n(S,W) = |\KK|^{2n-6}(|\KK|^{2n-4}-1)(|\KK|+1),\\
    \mu_6^n(S,W) = \begin{cases}
    |\KK|^{2n-5}(|\KK|^{2n-4}-1)(|\KK|-2) & W\in \E_5(S),\\
    |\KK|^{2n-5}(|\KK|^{2n-4}(|\KK|-2)+2) & W\in \E_6(S).\\
    \end{cases}
    \end{cases}
\end{equation}
\end{proposition}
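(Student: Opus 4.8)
The plan is to compute the six values $\mu_j^n(S,W)$ by disposing of the easy indices directly, reserving the degeneracy count $\mu_3^n(S,W)$ for a careful analysis via Corollary \ref{coroAlphaBeta}, and recovering $\mu_6^n(S,W)$ by a partition argument. Throughout fix a symplectic basis $w,u$ of $W$ with $\Psi(w,u)=1$ and a symplectic basis $e,f$ of $S$. Since $W\in\E_5(S)\cup\E_6(S)$ we have $\dim(S+W)=4$, with $S+W$ non-degenerate, $S\not\perp W$ and $\Psi(w_S,u_S)\neq 0$; moreover $\Psi(w_S,u_S)=1-w\CT{S}u$ is an $\SL_2$-invariant of $W$ given $S$, so $W\in\E_5(S)$ exactly when $\Psi(w_S,u_S)=1$. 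For the easy values, $\mu_1^n(S,W)=0$ because $T\in\C_1(S,W)$ would force $W=T\perp S$; by Lemma \ref{lm:mu4}, $\mu_4^n(S,W)=|\G((S+W)^\perp)|=d_{n-1}$, as $(S+W)^\perp$ is non-degenerate of dimension $2(n-2)$; and $\mu_5^n(S,W)$ comes straight from Eq. (\ref{computationOfMu5}) in the branch $\Psi(w_S,u_S)\neq 0$ with $\dim(S+W)=4$, using $q^{2n-2}-q^{2n-4}-q^2+1=(q^2-1)(q^{2n-4}-1)$.

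For $\mu_2^n(S,W)$ I would analyse the line $W\cap S^\perp$. A vector $aw+bu$ lies in $S^\perp$ iff its $S$-projection $a(w-w_S)+b(u-u_S)$ vanishes, and $w-w_S,u-u_S$ span $S$ precisely when $w\CT{S}u\neq 0$. Hence $W\cap S^\perp=0$ when $W\in\E_6(S)$, forcing $\mu_2^n(S,W)=0$ (any $T\in\C_2(S,W)$ needs $\dim(W\cap T)=1$ with $W\cap T\subseteq W\cap S^\perp$); while $\dim(W\cap S^\perp)=1$ when $W\in\E_5(S)$. In the latter case, writing $\ell=W\cap S^\perp$, the $T\in\C_2(S,W)$ are exactly the non-degenerate $2$-dimensional subspaces of $S^\perp$ containing $\ell$: counting vectors $t\in S^\perp$ with $\Psi(v,t)\neq 0$ for a fixed spanning vector $v$ of $\ell$ and dividing by the $|\KK|(|\KK|-1)$ redundancy yields $\mu_2^n(S,W)=|\KK|^{2n-4}$.

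The heart of the argument is $\mu_3^n(S,W)$. By Corollary \ref{coroAlphaBeta}, the $T\in\G(S^\perp)$ with $W+T$ degenerate correspond bijectively to solutions $(\alpha,\beta)$ of Eq. (\ref{alphaBetaOrthogonalComplements}), where $w'=f+w_S$, $u'=e+u_S$. As in Proposition \ref{propCase3STW}, $\dim\gen{w',u',e}=3$ and $\dim\gen{w',u',e,f}=\dim(S+W)=4$ give $|\KK|^{2n-4}$ choices of $\alpha=-e+\alpha_S$, each satisfying $\Psi(\alpha_S,w_S)=1$ and $\Psi(\alpha_S,u_S)=0$. The new phenomenon, absent when $\Psi(w_S,u_S)=0$, is that for $\Psi(w_S,u_S)\neq 0$ these two constraints have a unique solution with $\alpha_S\in\gen{w_S,u_S}$, namely $\alpha_S=-\Psi(w_S,u_S)^{-1}u_S$; call the corresponding vector $\alpha^\ast$. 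For the $|\KK|^{2n-4}-1$ generic $\alpha$ (with $\alpha_S\notin\gen{w_S,u_S}$), the spaces $\gen{w',u',f,\alpha}$ and $\gen{w',u',e,f,\alpha}$ have dimensions $4$ and $5$, yielding $|\KK|^{2n-5}$ solutions $\beta$ each. For $\alpha^\ast$ the two cases diverge: when $W\in\E_6(S)$ both $\gen{w',u',f,\alpha^\ast}$ and $\gen{w',u',e,f,\alpha^\ast}$ have dimension $4$, giving $0$ solutions $\beta$; when $W\in\E_5(S)$ one has $\alpha^\ast=-u'$, so $\gen{w',u',f,\alpha^\ast}$ drops to dimension $3$, giving $|\KK|^{2n-4}$ solutions $\beta$. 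Thus the total number of degenerate $W+T$ is $(|\KK|^{2n-4}-1)|\KK|^{2n-5}$ for $W\in\E_6(S)$ and $(|\KK|^{2n-4}-1)|\KK|^{2n-5}+|\KK|^{2n-4}$ for $W\in\E_5(S)$. In the first case $w\CT{S}u\neq 0$ forces $\dim(W+T)=4$ for every $T\perp S$, so this is already $\mu_3^n(S,W)$; in the second, the surplus $|\KK|^{2n-4}$ consists exactly of the sums with $\dim(W+T)=3$, so subtracting $\mu_2^n(S,W)=|\KK|^{2n-4}$ again gives $\mu_3^n(S,W)=|\KK|^{2n-5}(|\KK|^{2n-4}-1)$.

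Finally, since $S^\perp$ is non-degenerate of dimension $2(n-1)$, for each $T\in\G(S^\perp)$ the subspace $W$ lies in exactly one $\E_j(T)$, so $\C_1(S,W),\ldots,\C_6(S,W)$ partition $\G(S^\perp)$ and $\sum_{j=1}^6\mu_j^n(S,W)=|\G(S^\perp)|=d_n$. I would then obtain $\mu_6^n(S,W)$ by subtracting the five computed values from $d_n$ and simplifying, recovering the two stated expressions according to whether $W\in\E_5(S)$ or $W\in\E_6(S)$. I expect the main obstacle to be the bookkeeping in the $\mu_3$ step: isolating the single exceptional vector $\alpha^\ast$ with $\alpha_S\in\gen{w_S,u_S}$, tracking the resulting drops in the dimensions of $\gen{w',u',f,\alpha^\ast}$ and $\gen{w',u',e,f,\alpha^\ast}$, and correctly identifying the $\E_5(S)$ surplus with the degenerate sums of dimension three.
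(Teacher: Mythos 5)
Your proposal is correct, and for the core computations ($\mu_3^n$ via the $\alpha,\beta$ count of Corollary \ref{coroAlphaBeta} with the single exceptional vector $\alpha^\ast=-e-\Psi(w_S,u_S)^{-1}u_S$, $\mu_5^n$ via Eq. (\ref{computationOfMu5}), and $\mu_6^n$ by complementation in $\G(S^\perp)$) it coincides with the paper's proof. Where you genuinely diverge is in the treatment of $\mu_2^n$ and the splitting of the degenerate count between $\C_2$ and $\C_3$. The paper extracts $\mu_2^n$ from the same $\alpha,\beta$ machinery by a direct linear-algebra argument inside $W+T$: it shows that $\dim(W+T)=3$ forces $w\CT{S}u=0$ and $x=-u_S$, i.e.\ $\alpha=-u'$, and then identifies the $q^{2n-4}$ solutions attached to that single $\alpha$ with $\C_2(S,W)$. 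You instead compute $\mu_2^n$ independently and more cleanly: $W\cap S^\perp$ is the kernel of the projection $W\to S$, so it is zero exactly when $w\CT{S}u\neq 0$ (case $\E_6$) and a line $\ell$ when $w\CT{S}u=0$ (case $\E_5$), and in the latter case $\C_2(S,W)$ is exactly the set of non-degenerate planes of $S^\perp$ through $\ell$, of which there are $q^{2n-4}$. Since any $T\perp S$ with $W+T$ degenerate lies in $\C_2(S,W)\sqcup\C_3(S,W)$ (a $3$-dimensional alternating space is automatically degenerate, and $T=W$ is excluded), $\mu_3^n$ then follows by subtracting your $\mu_2^n$ from the total degenerate count — so the "identification of the surplus with the dimension-$3$ sums" that you flag as a potential obstacle is in fact not needed as an input; it is a consequence. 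This makes your route slightly more robust than the paper's, at the cost of one extra independent count.
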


\begin{proof}
By Table \ref{sixCases}, we can fix a basis $w,u$ of $W$ such that
$\Psi(w_S,u_S)\neq 0$ and $w\neq w_S$.
In addition, if $w\CT{S} u = 0$, we take $u=u_S$.
Let $q$ be the size of $\KK$.
%, which is finite by hypothesis.

The value of $\mu_1^n$ is clear, and $\mu_4^n(S,W) = |\G((S+W)^\perp)| = d_{n-1}$ since $S+W$ is non-degenerate of dimension $4$.

We compute now $\mu_2^n$ and $\mu_3^n$.
By Corollary \ref{coroAlphaBeta}, it is enough to count the number of solutions $\alpha,\beta$ to Eq. (\ref{alphaBetaOrthogonalComplements}).
%This will give the total number of $T\in \G(S^\perp)$ such that $W+T$ is degenerate.
Let $S = \gen{e,f}$ with $\Psi(e,f) = 1$.
Recall that $w' = f + w_S$, $u' = e + u_S$.

\bigskip

\textbf{Claim 1.} There are $q^{2n-4}$ vectors $\alpha$ such that $\alpha\in \gen{w',u',e}^\perp - \gen{f}^\perp$ and $\alpha_e = -1$.

\begin{proof}
As in the proof of Claim 1 in Proposition \ref{propCase3STW}, we have to compute the dimension of the subspaces $\gen{w',u',e}$ and $\gen{w',u',e,f}$.

We have that $\gen{w',u',e} = \gen{f + w_S, u_S, e}$, and since $u_S\neq 0$, this subspace has dimension $3$ and  its orthogonal complement has dimension $2n-3$.

On the other hand, $\gen{w',u',e,f} = \gen{w_S,u_S}\oplus\gen{e,f}$.
Since $\Psi(w_S,u_S)\neq 0$ by hypothesis, this subspace has dimension $4$.
In consequence, there are $\frac{1}{q-1}\left( q^{2n-3} - q^{2n-4}\right) = q^{2n-4}$ choices for $\alpha$.
This finishes the proof of Claim 1.
\end{proof}

\textbf{Claim 2.} Let $\alpha\in \gen{w',u',e}^\perp - \gen{f}^\perp$ such that $\alpha_e = -1$.
Then, the number of vectors $\beta\in \gen{w',u',f,\alpha}^\perp - \gen{e}^\perp$ with $\beta_f = 1$ is given by:
\begin{enumerate}
    \item For $\alpha = -e + \Psi(u_S,w_S)^{-1}u_S$, we have $q^{2n-4}$ choices of $\beta$ if $w\CT{S} u = 0$ and $0$ otherwise.
    \item If $\alpha \neq -e + \Psi(u_S,w_S)^{-1}u_S$, we have $q^{2n-5}$ choices of $\beta$.
\end{enumerate}
\begin{proof}
Fix $\alpha$ satisfying the given conditions.
Recall that $\alpha = -e + \alpha_S$.
As before, we compute the dimensions of $\gen{w',u',f,\alpha}$ and $\gen{w',u',e,f,\alpha}$.

On one hand, we have $\gen{w',u',f,\alpha} = \gen{w_S,e + u_S,f,-e + \alpha_S}$.
This subspace has dimension $4$ if and only if $\alpha_S\neq -u_S$, that is, $\alpha\neq -u'$.
Also observe that $-u' \in \gen{w',u',e}^\perp - \gen{f}^\perp$ if and only if $w\CT{S} u = 0$.

Now we move to the second subspace:
\[ \gen{w',u',e,f,\alpha} = \gen{w_S,u_S,e,f,\alpha_S} = \gen{w_S,u_S,\alpha_S}\oplus \gen{e,f}.\]
This subspace has dimension $5$ if and only if $\alpha_S\notin \gen{w_S,u_S}$, since $w_S,u_S$ are linearly independent.
Otherwise, it has dimension $4$.

Suppose that $\alpha_S = a w_S+bu_S$.
The orthogonality relations of the definition of $\alpha$ yield:
\[ \begin{cases}
0 = \Psi(\alpha,w') = -1 - b \Psi(w_S,u_S),\\
0 = \Psi(\alpha,u') = a \Psi(w_S,u_S).
\end{cases}\]
From $\Psi(w_S,u_S)\neq 0$ we conclude that $a = 0$ and that $b = \Psi(u_S,w_S)^{-1}$.
Hence there exists a unique $\alpha$, given by $\alpha = -e + \Psi(u_S,w_S)^{-1}u_S$, such that $\gen{w',u',e,f,\alpha}$ has dimension $4$.
If $\alpha \neq -e + \Psi(u_S,w_S)^{-1}u_S$, then $\gen{w',u',e,f,\alpha}$ has dimension $5$.

To conclude the proof of this claim, we have to analyse the different cases we obtained.

Assume first that $\alpha = -e + \Psi(u_S,w_S)^{-1}u_S$ (which is indeed a possible solution for $\alpha$).
Then $\gen{w',u',e,f,\alpha}$ has dimension $4$.
However, $\gen{w',u',f,\alpha}$ has dimension $3$ if and only if $\Psi(u_S,w_S)^{-1} = -1$, if and only if $w\CT{S} u = 0$.
This shows that, with this fixed $\alpha$, if $w\CT{S} u = 0$ then we have $\frac{1}{q-1}\left( q^{2n-3} - q^{2n-4}\right) = q^{2n-4}$ choices for $\beta$.
If $w\CT{S} u\neq 0$, both subspaces have dimension $4$, and we get then $0$ choices for $\beta$.

Finally, if $\alpha\neq -e + \Psi(u_S,w_S)^{-1}u_S$, we obtain $q^{2n-5}$ choices for $\beta$.
\end{proof}

From Claims 1 and 2, it follows that the total number of subspaces $T\perp S$ such that $W+T$ is degenerate is given by $(q^{2n-4}-1)q^{2n-5} + h$, where $h = q^{2n-4}$ if $w\CT{S} u=0$ and $0$ otherwise.

To finish the proof of this proposition, we show that $W+T$ has dimension $3$ if and only if $w\CT{S} u=0$ and $x = -u_S$, where $x,y$ is a basis of $T$ as in Eq. (\ref{conditionsDegenerateTW}).
By Corollary \ref{coroAlphaBeta}, if $x=-u_S$ then $\alpha = -e - u_S$, so there are $q^{2n-4}$ choices for $\beta$ and hence for $y = \beta - f$.

Now, suppose that $W+T$ is degenerate, and take a 
basis $x,y$ of $T$ as in Eq. (\ref{conditionsDegenerateTW}).
Suppose that $0 = aw+bu+cx+dy$.
By projecting onto $S$ we get that $0 = a(w-w_S)+b(u-u_S)$.
Then, there is a nontrivial solution for $a,b$ if and only if $w\CT{S} u = 0$.
If $w\CT{S} u \neq 0$ then $a = 0 = b$, and $0 = cx+dy$ leads to $c=0=d$ too.
Hence, if $w\CT{S} u \neq 0$, i.e. $W\in \E_6(S)$, we get that $\dim(W+T) = 4$, and then $\mu_2^n(S,W) = 0$ and $\mu_3^n(S,W) = q^{2n-5}(q^{2n-4}-1)$.

Assume now that $w\CT{S} u = 0$, that is $W\in \E_5(S)$.
We show that $\mu_2^n(S,W) = q^{2n-4}$.
From the relations of Eq. (\ref{conditionsDegenerateTW}) we get:
\[\begin{cases}
0 = \Psi(aw+bu+cx+dy, w) = -b + c,\\
0 = \Psi(aw+bu+cx+dy, u) = a + d.
\end{cases}\]
Thus $c = b$, $d = -a$ and $0 = a(w - y) + b(u+x)$.
By projecting onto $S$ we get that $0 = a(w-w_S)$ since $u=u_S$ by hypothesis.
Also $w\neq w_S$ yields $a = 0$.
This shows that either $b = 0$ or $x = -u=-u_S$.
We conclude that $\dim(T+W) = 3$ if and only if $x = -u_S$, that is, $\alpha = -u'$.
In this case, by item (1) in Claim 2, we get $q^{2n-4}$ solutions for $\beta$, and hence for $y = \beta - f$.
Since the pair $(x,y)$ is unique with the properties of Eq. (\ref{conditionsDegenerateTW}), we have $\mu_2^n(S,W) = q^{2n-4}$.

The value of $\mu_5^n$ can be obtained from Eq. (\ref{computationOfMu5}) in Lemma \ref{lem:circTCondition} since $\Psi(w_S,u_S)\neq 0$.
The remaining value $\mu_6^n$ can be computed by taking suitable set complements.

Finally, note that these values are consistent in the case $n = 2$.
This finishes the proof of this proposition.
\end{proof}

Since we have seen that $\mu_j^n(S,W)$ depend just on the six cases in Definition \ref{def:sixGeneralCases}, at least for finite fields, we will abbreviate these values by $\mu_{i,j}^n$:

\begin{definition}
Let $V$ be a symplectic space of dimension $2n$ over $\GF{q}$.
For $1\leq i,j\leq 6$, define:
\[ \mu^n_{i,j}:= \begin{cases}
\mu^n_j(S,W) & \text{ there exist $S,W\in \G(V)$ with } W\in \E_i(S),\\
0 & \text{ otherwise}.
\end{cases}\]
\end{definition}

%Observe that, by transitivity, there exist $S,W\in \G(V)$ with $W\in\E_i(S)$ if and only if $|\E_i(E_1)| > 0$.

In the following theorem, we summarise the computations of this section in terms of the values $b_n,c_n,d_n,e^1_n,e^2_n$ computed in Table \ref{sixCases}.

\begin{theorem}
\label{thm:valuesMunji}
Let $V$ be a symplectic space of dimension $2n$, $n\geq 2$, over $\GF{q}$.
Then Table \ref{tab:munji} summarises the values of $\mu^n_{i,j}$ when there exist $S,W\in\G(V)$ with $W\in \E_i(S)$.

\begin{table}[ht]
    \centering
    \begin{tabular}{|c|c|c|c|c|c|c|}
        \hline
        $\mu^n_{i,j}$ & $j = 1$ & $j = 2$ & $j = 3$ & $j = 4$ & $j = 5$ & $j = 6$\\
        \hline
        $i = 1$ & $0$ & $0$ & $0$ & $d_n$ & $0$ & $0$\\
        $i = 2$ & $0$ & $0$ & $0$ & $d_{n-1}\cdot q^2$ & $q^{4n-8}$ & $0$ \\
        $i = 3$ & $0$ & $0$ & $q^{4n-9}$ & $d_{n-2}\cdot q^4$ & $q^{4n-10}(q+1)$ & $q^{4n-9}(q-2)$\\
        $i = 4$ & $1$ & $b_{n-1}$ & $c_{n-1}$ & $d_{n-1}$ & $e^1_{n-1}$ & $e^2_{n-1}$ \\
        $i = 5$ & $0$ & $q^{2n-4}$ & $q^{2n-5}(q^{2n-4}-1)$ & $d_{n-1}$ & $e^1_{n-1}$ & $e^2_{n-1}$\\
        $i = 6$ & $0$ & $0$ & $q^{2n-5}(q^{2n-4}-1)$ & $d_{n-1}$ & $e^1_{n-1}$ & $e^2_{n-1}+q^{2n-4}$\\
        \hline
    \end{tabular}
    \caption{The entry $(i,j)$ gives the value of $\mu_{i,j}^n$ in case there exist $S,W\in \G(V)$ with $W\in \E_i(S)$.}
    \label{tab:munji}
\end{table}

\end{theorem}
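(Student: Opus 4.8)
This theorem is a bookkeeping statement: it collects into a single table the values of $\mu_j^n(S,W)$ computed case by case earlier in the section. The plan is to verify that each row of Table~\ref{tab:munji} is the transcription of the corresponding result, and that the entries written in the notation of Table~\ref{sixCases} are correct after the appropriate index shift.

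First I would record why $\mu^n_{i,j}$ is well defined. In each of Eq.~(\ref{muCase1}), Eq.~(\ref{muCase2}), Eq.~(\ref{muCase3}) (Proposition~\ref{propCase3STW}), Eq.~(\ref{muCase4}) (Proposition~\ref{propCase4STW}), and Eq.~(\ref{muCases5and6}) (Proposition~\ref{propCase5STW}), the number $\mu_j^n(S,W)$ is expressed purely in terms of $n$ and $q$, with no dependence on the chosen pair $(S,W)$ beyond the index $i$ for which $W\in\E_i(S)$. Hence the definition of $\mu^n_{i,j}$ is unambiguous and records exactly these numbers.

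Next I would fill the table row by row. Row $i=1$ is read off from Eq.~(\ref{muCase1}); row $i=2$ from Eq.~(\ref{muCase2}); row $i=3$ from Eq.~(\ref{muCase3}); row $i=4$ from Eq.~(\ref{muCase4}); and rows $i=5$ and $i=6$ from Eq.~(\ref{muCases5and6}), taking the branch $W\in\E_5(S)$ for the former and $W\in\E_6(S)$ for the latter. The only substantive step is to match the power-of-$q$ outputs of the propositions with the Table~\ref{sixCases} quantities at the shifted index $n-1$, namely
\[ b_{n-1}=(q^{2n-4}-1)(q+1), \qquad c_{n-1}=q(q^{2n-4}-1)(q^{2n-6}-1), \qquad d_{n-1}=\frac{q^{2n-6}(q^{2n-4}-1)}{q^2-1}, \]
\[ e^1_{n-1}=q^{2n-6}(q^{2n-4}-1)(q+1), \qquad e^2_{n-1}=q^{2n-5}(q^{2n-4}-1)(q-2), \]
obtained from Table~\ref{sixCases} by replacing $n$ with $n-1$; comparing these with the formulas in the propositions settles all entries except one.

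The single genuine computation is the $(6,6)$ entry. For $W\in\E_6(S)$, Proposition~\ref{propCase5STW} yields $q^{2n-5}\big(q^{2n-4}(q-2)+2\big)$, and I would confirm that this equals $e^2_{n-1}+q^{2n-4}$ via
\[ q^{2n-5}(q^{2n-4}-1)(q-2)+q^{2n-4} = q^{2n-5}\big((q^{2n-4}-1)(q-2)+q\big) = q^{2n-5}\big(q^{2n-4}(q-2)+2\big), \]
using $q^{2n-4}=q\cdot q^{2n-5}$ and $-(q-2)+q=2$. The internal coherence of the table (for instance $\mu_{i,4}=d_{n-1}$ for $i\in\{4,5,6\}$, reflecting $\mu_4^n(S,W)=|\G((S+W)^\perp)|$ with $(S+W)^\perp$ non-degenerate of dimension $2n-4$) then follows directly. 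I expect no real obstacle: the work was done in the case-by-case propositions, and the theorem merely organizes it, the only arithmetic check being the identity above.
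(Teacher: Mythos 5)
Your proposal is correct and matches the paper exactly: the theorem has no separate proof in the paper beyond the statement that it "summarises the computations of this section," and your row-by-row transcription from Eqs.~(\ref{muCase1})--(\ref{muCases5and6}), together with the index shift $n\mapsto n-1$ in the quantities of Table~\ref{sixCases} and the arithmetic identity verifying the $(6,6)$ entry $q^{2n-5}\bigl(q^{2n-4}(q-2)+2\bigr)=e^2_{n-1}+q^{2n-4}$, is precisely the content being asserted.
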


%%%%%%%%%%%%%%%%
%%%% Maple code
% bb:= (m,q) -> (q^(2*m-2)-1)*(q+1);
% cc:= (m,q) -> (q^(2*m-2)-1)*(q^(2*m-3)-q);
% dd:= (m,q)-> q^(2*m-4)*(q^(2*m-2)-1)/(q^2-1);
% ee1:= (m,q) -> dd(m,q)*(q^2-1)*(q+1);
% ee2:= (m,q) -> q*ee1(m,q)/(q+1)*(q-2);
% mu := (m,q)-> Matrix( [
% [0, 0, 0, dd(m,q), 0, 0],
% [0, 0, 0, dd(m-1,q)*q^2, q^(4*m-8), 0],
% [0, 0, q^(4*m-9), dd(m-2,q)*q^4, q^(4*m-10)*(q+1), q^(4*m-9)*(q-2)],
% [1, bb(m-1,q), cc(m-1,q), dd(m-1,q), ee1(m-1,q), ee2(m-1,q)],
% [0, q^(2*m-4), q^(2*m-5)*(q^(2*m-4)-1), dd(m-1,q), ee1(m-1,q), ee2(m-1,q)],
% [0, 0, q^(2*m-5)*(q^(2*m-4)-1), dd(m-1,q), ee1(m-1,q), q^(2*m-4) + ee2(m-1,q)]
% ] );
% p0:=(x,m,q)-> (x-dd(m,q)).(x-q^(2*m-5)).(x-q^(2*m-4)).(x+q^(2*m-4)).(x-q^(3*m-6)).(x+q^(3*m-6));
% p1:=(x,m,q)-> (x-dd(m,q)).(x-q^(2*m-4)).(x+q^(2*m-4)).(x-q^(3*m-6)).(x+q^(3*m-6));
% p2:=(x,m,q)-> (x-dd(m,q)).(x-q^(2*m-5)).(x+q^(2*m-4)).(x-q^(3*m-6)).(x+q^(3*m-6));
% p3:=(x,m,q)-> (x-dd(m,q)).(x-q^(2*m-5)).(x-q^(2*m-4)).(x-q^(3*m-6)).(x+q^(3*m-6));
% p4:=(x,m,q)-> (x-dd(m,q)).(x-q^(2*m-5)).(x-q^(2*m-4)).(x+q^(2*m-4)).(x+q^(3*m-6));
% p5:=(x,m,q)-> (x-dd(m,q)).(x-q^(2*m-5)).(x-q^(2*m-4)).(x+q^(2*m-4)).(x-q^(3*m-6));
% simplify(Column(p0(mu(m,q),m,q),1));
%%%%%%%%%%%%%%%%

\section{Walks and eigenvalues}

Throughout this section, $\KK = \GF{q}$ is a finite field and $V$ is a symplectic space over $\KK$.
For $S,W\in\G(V)$ and $r\geq 0$, let $l_r(S,W)$ denote the number of walks of length $r$ from $S$ to $W$ in the orthogonality graph.
Since our goal is to calculate the eigenvalues of $\G(V)$, in this section we prove that $l_r(S,W)$ depends just on the index $i$ for which $W\in \E_i(S)$, and then reduce the computation of eigenvalues to, roughly, the roots of the minimal polynomial of the $6$-dimensional matrix $\mu = (\mu^n_{i,j})_{i,j}$ (see Table \ref{tab:munji}).

First, we prove inductively that $l_r(S,W)$ depends just on the index $i$ for which $W\in \E_i(S)$.
Suppose that the values $l_k(S,W) = l_{i,k}^n$ are independent of the choices of $S,W\in\G(V)$ such that $W\in \E_i(S)$, for all $0\leq k < r$ and $1\leq i\leq 6$.
Then, for $1\leq i\leq 6$ and $W\in \E_i(S)$, we have:
\begin{equation}
    \label{pathLengthStrategy}
    l_r(S,W) = \sum_{T\perp S} l_{r-1}(T,W) = \sum_{j=1}^6 \sum_{T\in \C_j(S,W)} l_{r-1}(T,W) = \sum_{j=1}^6 \mu^n_{i,j} \cdot l_{j,r-1}^n.
\end{equation}
Thus $l_r(S,W)$ depends just on the index $i$ and not on the specific choice of $S,W$.

For example, we have that:
\begin{equation}
    \label{length1}
    l_1(S,W) = \begin{cases}
    1 & W \in \G(S^\perp) = \E_4(S),\\
    0 & W \notin \E_4(S).
    \end{cases}
\end{equation}
and 
\begin{equation}
    \label{length2}
    l_2(S,W) = \mu_4^n(S,W) = \begin{cases}
    d_n & W\in \E_1(S),\\
    d_{n-1} \cdot |\KK|^2 & W\in \E_2(S),\\
    d_{n-2} \cdot |\KK|^4 & W\in \E_3(S),\\
    d_{n-1} & W\in \E_4(S)\cup \E_5(S) \cup \E_6(S).
    \end{cases}
\end{equation}

\begin{definition}
Let $V$ be a symplectic space of dimension $2n$ over $\GF{q}$.
For $r\geq 0$ we define
\begin{align*}
    \mu & := (\mu_{i,j}^n)_{i,j},\\
    l^n_{i,r} & := \begin{cases}
        l_r(S,W) & \text{there exist }S,W\in \G(V)\text{ with } W\in \E_i(S),\\
        0 & \text{otherwise},
    \end{cases}\\
    l^n_r & := (l^n_{1,r},\ldots, l^n_{6,r})^t.
\end{align*}
Thus, for the adjacency matrix $A$ of $\G(V)$ and $W\in \E_i(S)$ we have:
\begin{equation}
\label{generalMuFormula}
    \begin{cases}
    l^n_{r} = \mu \cdot l^n_{r-1}= \mu^{r} \cdot l_0, & \forall r\geq 1,\\
    (A^r)_{(S,W)} = l^n_{i,r} = (\mu^r \cdot l_0)^t_i.
    \end{cases}
\end{equation}
\end{definition}

From these definitions and the already mentioned properties, we see that a polynomial $R$ annihilates the adjacency matrix $A$ of $\G(V)$ if and only if $R(\mu)\cdot l_0 = 0$.
In particular, the minimal polynomial of $A$ divides the minimal polynomial of $\mu$ and thus has degree at most $6$.

Using these observations, computations in MAPLE show that the minimal polynomial of $A$ has the following $6$ roots for $n\geq 3$:
\begin{equation}
\label{listEigenvalues}
%\begin{cases}
    d_n, q^{2n-5},\pm q^{2n-4}, \pm q^{3n-6}.
    %d_n, q^{2n-5},- q^{2n-4}, \pm q^{3n-6} & q = 2.
%\end{cases}
\end{equation}

In the case $n = 2$, we have that $\G(V)$ is a disconnected graph with as many connected components as $2$-frames.
Note that $d_2 = 1$ and the values of $l_r$ are:
\[l_0 = (1,0,0,0,0,0), \, l_1 = (0,0,0,1,0,0), \, l_2 = (1,0,0,0,0,0) = l_0.\]
Thus, $R = X^2-1$ is the minimal polynomial for the adjacency matrix of $\G(V)$.

We have proved:

\begin{theorem}
\label{thm:eigenvalues}
Let $V$ be a symplectic space of dimension $2n$ over $\GF{q}$, with $n\geq 2$.
The roots of the minimal polynomial of the adjacency matrix of $\G(V)$ are as follows:
\[\begin{cases}
    d_n, \, q^{2n-5}, \, \pm q^{2n-4}, \, \pm q^{3n-6} & n\geq 3,\\
    %d_n, \, q^{2n-5}, \, -q^{2n-4}, \, \pm q^{3n-6} & q = 2 \text{ and } n\geq 3,\\
    d_n, \, -q^{2n-4} & n = 2.
\end{cases}\]
\end{theorem}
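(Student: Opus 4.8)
The plan is to exploit the reduction already in place. By Eq.~(\ref{generalMuFormula}), a polynomial $R$ annihilates the adjacency matrix $A$ of $\G(V)$ if and only if $R(\mu)\cdot l_0 = 0$, where $\mu = (\mu^n_{i,j})_{i,j}$ is the explicit $6\times 6$ matrix recorded in Table~\ref{tab:munji} and $l_0 = (1,0,0,0,0,0)^t$. Hence the minimal polynomial $m_A$ of $A$ is the minimal monic $R$ with $R(\mu)l_0 = 0$, that is, the minimal polynomial of the pair $(\mu,l_0)$. Since $A$ is the adjacency matrix of a simple undirected graph, it is real symmetric and therefore diagonalisable, so $m_A$ is squarefree and its roots are exactly the distinct eigenvalues of $A$. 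Thus it suffices to identify the roots of $m_A$ and show they are the values in Eq.~(\ref{listEigenvalues}).

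First I would treat $n\geq 3$. The strategy is to show that the characteristic polynomial of $\mu$ factors as
\[
\chi_\mu(X) = (X - d_n)(X - q^{2n-5})(X^2 - q^{4n-8})(X^2 - q^{6n-12}),
\]
so that $\mu$ has the six eigenvalues $d_n,\, q^{2n-5},\, \pm q^{2n-4},\, \pm q^{3n-6}$, which are pairwise distinct for $n\geq 3$. Rather than expanding a $6\times 6$ determinant with entries that are Laurent polynomials in $q$, I would verify each eigenvalue by producing, for every candidate $\lambda$, a nonzero vector $v_\lambda$ with $\mu v_\lambda = \lambda v_\lambda$; the coincidences among the rows $i = 4,5,6$ of Table~\ref{tab:munji} should make a workable choice of eigenvectors, after which the eigenvalue relations reduce to polynomial identities in $q$.

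It then remains to see that all six eigenvalues actually occur as roots of $m_A$, i.e.\ that $l_0$ has a nonzero component along each eigenvector of $\mu$. Equivalently, $l_0$ is a cyclic vector for $\mu$, and since $\mu$ has six distinct eigenvalues this forces $m_A = \chi_\mu$. The cyclicity is transparent from the low-order walk counts: Eqs.~(\ref{length1})--(\ref{length2}) already exhibit $l_0,\, \mu l_0,\, \mu^2 l_0$, and one checks that $l_0,\ldots,\mu^5 l_0$ are linearly independent. Finally, the case $n = 2$ is handled directly: here $\G(V)$ is a disjoint union of edges, one per $2$-frame, and the iterates $l_0 = (1,0,0,0,0,0)^t$, $l_1 = (0,0,0,1,0,0)^t$, $l_2 = l_0$ give $m_A = X^2 - 1$, whose roots $1 = d_2$ and $-1 = -q^{2n-4}$ are the asserted eigenvalues.

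The main obstacle is the factorisation of $\chi_\mu$: the entries of $\mu$ interleave the powers $q^{2n-\ast}$ with the quantities $b_{\ast}, c_{\ast}, d_{\ast}, e^1_{\ast}, e^2_{\ast}$, so a naive symbolic determinant is unwieldy (this is precisely the computation delegated to MAPLE in the text). Making the argument self-contained amounts to finding the right eigenvectors, or a block decomposition of $\mu$ that isolates the four factors $X - d_n$, $X - q^{2n-5}$, $X^2 - q^{4n-8}$ and $X^2 - q^{6n-12}$; once suitable eigenvectors are exhibited, every remaining check is a routine identity in $q$.
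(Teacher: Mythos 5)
Your reduction is exactly the paper's: the identity $(A^r)_{(S,W)} = (\mu^r\cdot l_0)_i$ and the equivalence ``$R(A)=0$ iff $R(\mu)\cdot l_0=0$'' are already set up in Eq.~(\ref{generalMuFormula}) before the theorem is stated, and the paper then simply reports that MAPLE finds the stated six roots for $n\geq 3$, treating $n=2$ by hand just as you do. The only genuinely new content in your plan is the proposal to replace the MAPLE step by exhibiting an eigenvector of $\mu$ for each of the six candidate values and checking that $l_0,\ldots,\mu^5 l_0$ are linearly independent; since neither the eigenvectors nor the independence check is actually produced, the computational core of the argument remains as undone as it is in the paper. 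That said, your formulation is more careful than the paper's on one point: the paper only records that $m_A$ divides the minimal polynomial of $\mu$ (so has degree at most $6$), whereas you correctly isolate the converse issue, namely that every eigenvalue of $\mu$ must be witnessed by the pair $(\mu, l_0)$ before one may conclude it is a root of $m_A$.

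One concrete warning about that cyclicity step: for $q=2$ the set $\E_6(S)$ is empty, since $e_n^2 = q^{2n-3}(q^{2n-2}-1)(q-2)=0$. By the paper's own conventions the sixth row of $\mu$ is then zero, and the sixth column (in rows $1$ through $5$) is zero as well because $\C_6(S,W)=\emptyset$ for every pair. Consequently the Krylov space spanned by $l_0,\mu l_0,\mu^2 l_0,\ldots$ has dimension at most $5$, the vectors $l_0,\ldots,\mu^5 l_0$ cannot be linearly independent, and $m_A$ has degree at most $5$ --- so the six pairwise distinct values $d_n,\,q^{2n-5},\,\pm q^{2n-4},\,\pm q^{3n-6}$ cannot all be roots of it. Your argument therefore needs a separate treatment of $q=2$ (and this appears to be a point on which the theorem as stated, and the paper's MAPLE verification if carried out symbolically in $q$, also require care). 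For $q\geq 3$ the plan is sound, conditional on actually verifying the claimed factorisation of $\chi_\mu$ and the independence of the Krylov vectors.
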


%n=2
%Also, the multiplicities are straightforward to compute: since the multiplicity of $d_2=1$ is the number of connected components, which is $d_3/2$, and the multiplicity of $-1$ is $d_3-d_3/2=d_3/2$.

% \begin{align*}
%     c_0 & = - d_n q^{5n-4} (-1)^n ,\\
%     c_1 & = (q^{n-2}(-1)^n - d_n) q^{4n-2},\\
%     c_2 & = d_n(q^{2n}+q^{2n-2})q^{n-2}(-1)^n + q^{4n-2},\\
%     c_3 & =  (q^{2n}+q^{2n-2})(d_n-q^{n-2}(-1)^n),\\
%     c_4 & = -d_nq^{n-2}(-1)^n-(q^{2n}+q^{2n-2}),\\
%     c_5 & = -d_n+q^{n-2}(-1)^n,\\
%     c_6 & = 1.
% \end{align*}

\section{Connectivity}

In this section, we characterise the connectivity of the orthogonality graph $\G(V)$ and hence of the frame complex $\F(V)$ for a symplectic space $V$ of dimension $2n$ over a field $\KK$.

%Note that if $n = 1$ then $\F(V)$ contains just a vertex $V$ and thus $\F(V) = \G(V)$ is contractible.

% Let $V$ be a symplectic space of dimension $2n$ over $\GF{q}$.
% Then, if $n = 1$, $\F(V) = \{V\}$, and if $n = 2$ then $\F(V)$ is a disjoint union of $\frac{d_3}{2} = q^2(q^2+1)/2$ contractible connected components.
% Hence, from now on we assume that $n\geq 3$ in order to  study homotopy properties of $\F(V)$ when $n\geq 3$.

% Note that $\G(V)$ has diameter $2$ if and only if for all $S,W\in \G(V)$ we have that $\mu_4^n(S,W) = |\F( (S+W)^\perp )|\neq \emptyset$.
% Now, these are the values $\mu_{i,4}$, corresponding to the column $j=4$ of Table \ref{tab:munji}.
% We check when these are positive.
% Clearly $d_n > 0$.
% On the other hand, since $d_1 = 0$, we have that $d_{n-2} > 0$ if and only if $n\geq 4$.
% And similarly $d_{n-1} > 0$ if and only if $n\geq 3$.
% Thus $\G(V)$ is connected of diameter $2$ if and only if $n\geq 4$.

% So we analyse now the case $n = 3$.
% Here we have that $\mu_4^3(S,W) = 0$ when $W\in \E_3(S)$, that is, $S+W$ is degenerate of dimension $4$.
% That is, $\mu_{3,4}^3 = 0$.
% However, we will see that $l_3(S,W) > 0$ for any $S,W$.
% To compute the walks of length $3$, we use Eq. (\ref{generalMuFormula}):
% \begin{align*}
%     l_3 & = \mu^3 \cdot l_0\\
%     & = ( q^4+q^2 , q^4+q^2 , q^4-q^3+q^2 , q^5+3 q^4-2 q^3+q , 2 q^4-q^3+q , q^4-q^3+q^2+q)^t.
% \end{align*}
% Now it is straightforward to verify that each entry is positive for $q\geq 2$.

We have the following theorem:

\begin{theorem}
[{Connectivity}]
\label{thm:connected}
Let $V$ be a symplectic space of dimension $n\geq 2$ over $\KK$.
The following assertions hold.
\begin{enumerate}
    \item $\F(V)$ is connected if and only if $n\geq 3$.
    \item If $n = 2$, $\F(V)$ is homotopy-discrete with $|\KK|^2(|\KK|^2+1)/2$ connected components.
    \item If $n = 3$, $\G(V)$ has diameter $3$ and $\F(V)$ is homotopy equivalent to a wedge of $1$-spheres.
    In particular, for $\KK = \GF{q}$ finite, the number of such spheres is $-\tilde{\chi}(\F(V)) = \frac{1}{3}\left( q^{12}+2 q^{10} - q^8 - 2 q^6 - 3q^4+3\right)$.
    \item If $n\geq 4$, $\G(V)$ has diameter $2$.
\end{enumerate}
Moreover, if $V'$ is an $r$-degenerate symplectic space of dimension $2n+r$, then $\F(V')$ is connected if and only if $n\geq 3$.
\end{theorem}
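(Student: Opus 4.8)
The plan is to reduce everything to the orthogonality graph $\G(V)$ and then feed on the walk-counting machinery of Section 5. Since $\F(V)$ is the clique complex of $\G(V)$, the two share the same $1$-skeleton and hence the same connected components; in particular $\F(V)$ is connected iff $\G(V)$ is, and in the low-dimensional cases the homotopy type can be read off from $\G(V)$. First I would dispose of $n=2$: here $\dim V = 4$, so every vertex $S$ has a unique orthogonal mate $S^\perp$ (as $V = S\oplus S^\perp$), the degree is $d_2=1$, and $\G(V)$ is a perfect matching on $d_3 = |\KK|^2(|\KK|^2+1)$ vertices. There are no triangles (three pairwise orthogonal non-degenerate $2$-spaces cannot fit in dimension $4$), so $\F(V)$ is a disjoint union of $|\KK|^2(|\KK|^2+1)/2$ closed edges, each contractible; this gives item (2) and shows $\G(V)$ is disconnected, settling one direction of item (1).

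For $n\geq 3$ I would compute distances by class. By Section 5 the number $l_r(S,W)$ of walks of length $r$ depends only on the index $i$ with $W\in\E_i(S)$, and the distance from $S$ to $W$ is the least $r$ with $l^n_{i,r}>0$. Pairs in $\E_1$ are at distance $0$, in $\E_4$ at distance $1$, and from Eq. (\ref{length2}) the classes $\E_2,\E_5,\E_6$ are at distance $2$ (their $l_2$-value is $d_{n-1}|\KK|^2$ or $d_{n-1}$, both positive for $n\geq 3$). The delicate class is $\E_3$, where $l_2 = d_{n-2}|\KK|^4$ is positive for $n\geq 4$ but vanishes for $n=3$ (since $d_1=0$). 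Thus for $n\geq 4$ every non-adjacent pair is at distance exactly $2$; as $\E_2$ is non-empty the graph is not complete, so the diameter is exactly $2$, giving item (4). For $n=3$ I would show $\E_3$-pairs are at distance exactly $3$: a common neighbour $T$ would lie in $(S+W)^\perp$, which for a degenerate $4$-dimensional $S+W$ inside a $6$-dimensional $V$ equals the totally isotropic plane $\Rad(S+W)$ and so contains no non-degenerate $2$-space, forcing distance $>2$; then evaluating row $i=3$ of Table \ref{tab:munji} against the vector $l^3_{\,j,2}$ yields $l^3_{3,3}=|\KK|^4-|\KK|^3+|\KK|^2>0$, so the distance is $3$. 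This proves connectivity for all $n\geq 3$ and the diameter claim of item (3).

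For the homotopy type in item (3) I would invoke the collapse result $\F(V)\homotequiv \redF(V)$: for $n=3$ the poset $\redF(V)$ has dimension $n-2=1$, so $\F(V)$ is homotopy equivalent to a connected graph, hence to a wedge of $1$-spheres whose number is $1-\chi=-\tilde\chi(\F(V))$. Finally I would evaluate $\tilde\chi(\F(V))$ at $n=3$ via Corollary \ref{eulerCharFramesSymplectic}, obtaining the stated polynomial $\tfrac13(q^{12}+2q^{10}-q^8-2q^6-3q^4+3)$.

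For the degenerate ``Moreover'', write $V'=\Rad(V')\oplus V$ with $V$ non-degenerate symplectic of dimension $2n$ and let $p\co V'\to V$ be the projection. Any non-degenerate $2$-space meets $\Rad(V')$ trivially, so $p$ restricts to an isometry on each $W\in\G(V')$, giving $p(W)\in\G(V)$ and a surjection $\G(V')\to\G(V)$; since $\Psi'$ factors through $p$, we have $W_1\perp W_2$ in $V'$ iff $p(W_1)\perp p(W_2)$ in $V$. Hence $\G(V')$ is the blow-up of $\G(V)$ obtained by replacing each vertex with its edgeless fibre and joining two fibres completely whenever the base vertices are adjacent. As $\G(V)$ is regular of positive degree $d_n$, this blow-up is connected if and only if $\G(V)$ is, which by items (1)--(4) happens exactly when $n\geq 3$. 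The main obstacle is the exact diameter for $n=3$: excluding distance $2$ for $\E_3$ needs the geometric fact that $(S+W)^\perp$ is totally isotropic, and producing a length-$3$ walk is cleanest through the identity $l_r=\mu\,l_{r-1}$ rather than by constructing a path by hand.
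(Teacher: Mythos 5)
Your proposal is correct and follows essentially the same route as the paper: the $n=2$ matching picture, reading off distances from $l_2(S,W)=\mu_4^n(S,W)=|\G((S+W)^\perp)|$ (which isolates the $\E_3$ class in dimension $6$), producing a length-$3$ walk for $\E_3$-pairs via the $\mu$-matrix, and collapsing to $\redF(V)$ plus the Euler characteristic for the wedge count. Your blow-up description of $\G(V')$ over $\G(V)$ in the \emph{Moreover} part is a slightly more structural packaging of the paper's direct argument (project $S$ to $\G(V)$ and pick a common neighbour in $\G(V\cap p(S)^\perp)$), but the substance is identical.
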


\begin{proof}
For $n\geq 2$, $\G(V) = \F(V)$ and it is disconnected with a (contractible) connected component for each $2$-frame of $V$.
The number of $2$-frames is thus the number of vertices over $2$, which is $d_{3}/2$.
This number is infinite if $\KK$ is infinite, or, if $\KK = \GF{q}$, $d_3/2 = q^2(q^2+1)/2$ by Table \ref{sixCases}.

Now assume that $n\geq 4$.
The case $n = 3$ will be treated separately.
Let $S,W$ be two vertices of $\G(V)$.
Write $(S+W)^\perp = \Rad(S+W) \oplus U$, with $U$ symplectic of dimension $2m$ and $\Rad(S+W)$ of dimension $r$.
Then $2m = 2n-\dim(S+W)-r \geq 8-4-2=2$, which implies $\G(U)\neq\emptyset$.
Therefore
\[ l_2(S,W) = |\G( (S+W)^\perp )| \geq |\G(U)| > 0.\]
This shows that $\G(V)$ is connected of diameter exactly $2$ since there always exist non-adjacent vertices.

Finally, we deal with the case $n = 3$.
Take $S,W\in \G(V)$.
As before, $l_2(S,W) > 0$ if $(S+W)^\perp \neq \Rad(S+W)$.
However, since now the dimension is low, the equality $(S+W)^\perp = \Rad(S+W)$ holds if and only if $W\in \E_3(S)$.
In fact, $\E_3(S)\neq\emptyset$ for example by taking $S = \gen{e_1,e_4}$ and $W = \gen{e_1+e_2,e_4+e_2}$ in the canonical form.
Nevertheless, we show that $l_3(S,W) > 0$ for $W\in \E_3(S)$.
Indeed, $l_3(S,W) \geq l_2(T,W) > 0$ for $T\in \C_5(S,W)$, where the latter is non-empty by Proposition \ref{propCase3STW}.
%it is enough to show that $\C_j(S,W)$ is non-empty for some $j\neq 3$.
%By Proposition \ref{propCase3STW}, $|\C_5(S,W)|=\mu_5^n(S,W)>0$.
Therefore, $\G(V)$ is connected of diameter $3$.
Since $\F(V)$ is homotopy equivalent to a subcomplex of dimension $n-2=1$, we conclude that $\F(V)$ is homotopy equivalent to a wedge of $1$-spheres.
We can compute the number of such spheres for finite $\KK = \GF{q}$ from the reduced Euler characteristic of $\F(V)$, which is
\[\tilde{\chi}(\F(V)) =  -\frac{1}{3}\left( q^{12}+2 q^{10} - q^8 - 2 q^6 - 3q^4+3\right).\]

We have established items (1-4). It remains to prove the Moreover part.

Let $V'$ be an $r$-degenerate symplectic space of dimension $2n+r$.
Consider a complement $V$ to the radical, that is $V' = \Rad(V')\oplus V$, so that $V$ is a symplectic space of dimension $2n$.
Assume that $n\geq 3$, so $\F(V)$ is connected by item (1).
Let $S\in\G(V')$ and write $S = \gen{x,y}$ with $x = r + w$, $y = r'+u$ and $w,u\in V$.
Note that $0\neq \Psi(x,y) = \Psi(w,u)$.
Therefore $W = \gen{w,u} \in \G(V)$.
Now, consider some $T\in\G(V\cap W^\perp)$, which exists since $V$ is non-degenerate of dimension $2n\geq 6$.
Then it is easy to check that also $T\perp S$, that is $T\perp (S+W)$ and $T\in \G(V)$.
This shows that $\G(V')$, and hence $\F(V')$, are connected.

On the other hand, if $\F(V')$ is connected, then the projection $\G(V')\to \G(V)$ is surjective, so $\G(V)$ is connected.
Thus $n\geq 3$ by item (1).
\end{proof}

\section{Simple connectivity}

In this section, we show that $\F(V)$ is simply connected if $n\geq 5$, where $V$ is a symplectic space of dimension $2n$.
Recall that if $n = 3$ then $\F(V)$ is homotopy equivalent to a wedge of $1$-spheres by Theorem \ref{thm:connected}, so its fundamental group is a non-trivial free group.
However, the study of the case $n = 4$ remains open.
%Hence, it remains to understand the fundamental group of $\F(V)$ for the case $n = 4$.

In order to prove that $\pi_1(\F(V)) = 1$ if $n\geq 5$, we show that the minimal cycles of the graph $\G(V)$ are null-homotopic.
Concretely, recall that every loop in $\F(V)$ can be homotopically carried to a closed walk in the orthogonality graph $\G(V)$.
Since $\G(V)$ has diameter $2$ by Theorem \ref{thm:connected}, a closed walk in $\G(V)$ can be (homotopically) decomposed into cycles of length $r \leq 2\cdot 2 + 1 = 5$, called \textit{$r$-gons}.
Therefore, $\F(V)$ is simply connected if and only if every $r$-gon is null-homotopic.
Moreover, since $\F(V)$ is a clique complex, $3$-gons are filled and so null-homotopic.
Thus, it remains to show that $4$-gons and $5$-gons are also null-homotopic.
For more details, see Section 4 of \cite{PW22} or in \cite{ASegev}.

We show first that $5$-gons are homotopic to concatenations of squares (and thus possible $4$-gons).

\begin{lemma}
\label{lm:reduction5gonsDim5orMore}
Let $C$ be a $5$-gon in the graph $\G(V)$ of a symplectic space $V$ of dimension $2n$, with $n\geq 5$.
Then $C$ is homotopic in $\F(V)$ to a concatenation of squares.
\end{lemma}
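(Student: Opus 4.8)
Write $C = (S_1,S_2,S_3,S_4,S_5)$ as a closed edge-walk with $S_i\perp S_{i+1}$ (indices mod $5$). I would first reduce to a chordless simple $5$-cycle: if some non-consecutive pair is orthogonal, say $S_1\perp S_3$, then $\{S_1,S_2,S_3\}$ is a $2$-simplex of $\F(V)$ and $C$ shortcuts across it to the square $(S_1,S_3,S_4,S_5)$; likewise a repeated vertex splits $C$ into shorter closed walks. So I may assume the $S_i$ are distinct and $S_i\perp S_j$ only when $|i-j|\equiv 1\pmod 5$. The plan is then to produce a single vertex $T\in\G(V)$ orthogonal to the three vertices $S_1,S_3,S_5$, and to cut $C$ along $T$ into two squares.

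\textbf{Existence of $T$ (the main point).} The heart of the argument is to show that $(S_1+S_3+S_5)^\perp$ contains a non-degenerate $2$-dimensional subspace once $n\geq 5$. Since $S_1\perp S_5$ and both are non-degenerate, $M:=S_1\oplus S_5$ is non-degenerate of dimension $4$, so by Lemma~\ref{lemmaBasicSymplecticSpaces} we have $V=M\oplus M^\perp$ with $M^\perp$ non-degenerate of dimension $2n-4$. Let $\pi\colon V\to M^\perp$ be the orthogonal projection and put $\bar S_3:=\pi(S_3)$. Writing $N:=S_1+S_3+S_5$, a short computation gives $\dim N=4+\dim\bar S_3$ and $\Rad(N)=\Rad(\bar S_3)$: any radical vector is orthogonal to $M$, hence lies in $M^\perp$, and there it must lie in the radical of $\bar S_3$. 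Now $\dim\bar S_3\leq 2$, and for an at most $2$-dimensional alternating space one has $\dim\bar S_3+\dim\Rad(\bar S_3)\leq 4$ (the extremal case being $\bar S_3$ totally isotropic of dimension $2$). Using that $V$ is non-degenerate, so that $\Rad\big((S_1+S_3+S_5)^\perp\big)=\Rad(N)$ and $\dim (S_1+S_3+S_5)^\perp=2n-\dim N$, the non-degenerate part of $(S_1+S_3+S_5)^\perp$ has dimension
\[
2n-\dim N-\dim\Rad(N)=2n-4-\big(\dim\bar S_3+\dim\Rad(\bar S_3)\big)\geq 2n-8\geq 2.
\]
By Lemma~\ref{lemmaNr2dimNDSubspaces} this space then carries a non-degenerate $2$-dimensional subspace $T$, and by construction $T\perp S_1,S_3,S_5$; chordlessness forces $T\notin\{S_1,\dots,S_5\}$ (e.g.\ $T=S_2$ would need $S_2\perp S_5$).

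\textbf{Cutting $C$ into squares.} With such a $T$ the edges $S_1S_2,\,S_2S_3,\,S_3T,\,TS_1$ form a genuine square $Q_1=(S_1,S_2,S_3,T)$, and since $S_5\perp S_1$, $S_1\perp T$ and $T\perp S_5$, the set $\{S_1,T,S_5\}$ is a $2$-simplex of $\F(V)$. Cutting $C$ along the chord from $S_1$ to $S_3$ realised by $T$ yields $C\simeq Q_1\cdot C'$ with $C'=(S_1,T,S_3,S_4,S_5)$; cutting $C'$ along the chord $TS_5$ splits off the filled triangle $\{S_1,T,S_5\}$, which is null-homotopic, and leaves the square $Q_2=(T,S_3,S_4,S_5)$. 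Hence $C\simeq Q_1\cdot Q_2$ is homotopic in $\F(V)$ to a concatenation of squares. I expect the only real obstacle to be the existence of $T$, i.e.\ the control of $\dim\Rad(S_1+S_3+S_5)$; the precise place where $n\geq 5$ (rather than something larger) is needed is that the orthogonality $S_1\perp S_5$ makes $M$ non-degenerate and thereby caps $\dim N+\dim\Rad(N)$ at $8$.
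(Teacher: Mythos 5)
Your proposal is correct and follows essentially the same route as the paper: both arguments choose three vertices of the pentagon, two of which are consecutive (so that their sum is non-degenerate of dimension $4$), use this to bound $\dim(N)+\dim\Rad(N)\leq 8$ for the sum $N$ of the three vertices, extract a vertex $T\in\G(N^\perp)$ using $n\geq 5$, and cut $C$ into two squares and a filled triangle. Your preliminary reduction to chordless cycles and the appeal to Lemma~\ref{lemmaNr2dimNDSubspaces} (rather than just to the splitting $N^\perp=\Rad(N^\perp)\oplus U$ with $U$ symplectic of dimension $\geq 2$) are unnecessary but harmless.
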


\begin{proof}
Suppose that $C$ is the sequence of vertices $S_0,S_1,S_2,S_3,S_4,S_5=S_0$ where $S_i\perp S_{i+1}$ for all $0\leq i < 5$.
Then $S_0$ is not orthogonal to $S_2$ nor to $S_3$.
Let $T = S_0+S_2+S_3$.
Clearly $\dim(T)\leq 6$, and $\dim(\Rad(T)) \leq 2$ since $S_2+S_3$ is non-degenerate of dimension $4$.
Thus
\[ \dim(T^\perp) - \dim(\Rad(T^\perp)) = 2n - \dim(T) - \dim(\Rad(T)) \geq 2n-6-2\geq 10-8 \geq 2. \]
This shows that $T^\perp$ is a possibly degenerate symplectic space that contains non-degenerate symplectic subspaces of dimension $2$ or more.
Then we can take $W\in \G(T^\perp)$, and this $W$ triangulates the $5$-gon into two squares and a triangle.
This finishes the proof of the lemma.
\end{proof}

\begin{lemma}
\label{lm:4gonsDim5orMore}
Let $C$ be a $4$-gon in the graph $\G(V)$ of a symplectic space $V$ of dimension $2n$, with $n\geq 5$.
Then $C$ is null-homotopic in $\F(V)$.
\end{lemma}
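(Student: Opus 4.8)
The plan is to reduce the problem to a \emph{telescoping principle} and then invoke the connectivity result of Theorem~\ref{thm:connected}. The telescoping principle I would use is the following: given a $4$-gon $A\perp B\perp C\perp D\perp A$, regard $\{A,C\}$ as a diagonal; its common neighbours are exactly the vertices of $\G((A+C)^\perp)$, and $B,D$ are two of them. If $B$ and $D$ lie in the same connected component of $\G((A+C)^\perp)$, then the $4$-gon is null-homotopic. Indeed, a path $B=R_0,R_1,\dots,R_k=D$ with $R_i\in\G((A+C)^\perp)$ and $R_i\perp R_{i+1}$ produces quadrilaterals $A\,R_i\,C\,R_{i+1}$, each filled by the two triangles meeting along the orthogonal diagonal $R_iR_{i+1}$; concatenating these fillings bounds a disc with boundary $A\,R_0\,C\,R_k=A\,B\,C\,D$. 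Note also two trivial fillings: if a diagonal is itself orthogonal (say $S_0\perp S_2$) the $4$-gon splits into two triangles, and by Theorem~\ref{thm:connected} the graph $\G(U)$ of a (possibly degenerate) symplectic space $U$ is connected as soon as its symplectic part has dimension $\geq 6$.

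First I would dispose of the easy range. For the diagonal $\{S_0,S_2\}$ one has $\dim(S_0+S_2)\le 4$ and, since $S_0$ is non-degenerate, $\dim\Rad(S_0+S_2)\le 2$; hence the symplectic part of $(S_0+S_2)^\perp$ has dimension $\geq 2n-6$. For $n\geq 6$ this is $\geq 6$, so $\G((S_0+S_2)^\perp)$ is connected and the telescoping principle with $B=S_1$, $D=S_3$ finishes the argument. The genuine difficulty is therefore $n=5$: here both $(S_0+S_2)^\perp$ and $(S_1+S_3)^\perp$ may have symplectic part of dimension only $4$ and be disconnected, and one can build explicit $4$-gons in which $S_1,S_3$ (respectively $S_0,S_2$) fall into distinct components, so that neither diagonal can be swept directly.

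To treat the borderline case I would replace a vertex. Choose three consecutive vertices, say $S_0,S_1,S_2$, and a vertex $W\in\G((S_0+S_1+S_2)^\perp)$; such $W$ exists because $\dim(S_0+S_1+S_2)\leq 6$ and $\Rad(S_0+S_1+S_2)$ meets the non-degenerate $S_0\oplus S_1$ trivially, so it has dimension $\leq 2$ and the symplectic part of $(S_0+S_1+S_2)^\perp$ has dimension $\geq 2n-8\geq 2$. Since $W\perp S_0,S_1,S_2$, the square $S_0S_1S_2W$ is filled along the orthogonal diagonal $S_1W$, so $C\simeq S_0\,W\,S_2\,S_3$. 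I would now choose $W$ so that, in addition, $W+S_3$ is non-degenerate. Then the symplectic part of $(W+S_3)^\perp$ has dimension $2n-4\geq 6$, so $\G((W+S_3)^\perp)$ is connected by Theorem~\ref{thm:connected}; applying the telescoping principle to $S_0\,W\,S_2\,S_3$ with diagonal $\{W,S_3\}$ (its common neighbours $S_0,S_2$ now lying in a connected graph) shows that this $4$-gon, and hence $C$, is null-homotopic.

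The hard part is the existence of such a $W$. The condition ``$W+S_3$ non-degenerate'' is equivalent to the orthogonal projection of $W$ to $S_3^\perp$ being a non-degenerate plane, and it fails only on a proper subset of $\G((S_0+S_1+S_2)^\perp)$; a convenient sufficient condition is that $W$ contain a vector orthogonal to $S_3$, i.e. that $W\cap(S_0+S_1+S_2+S_3)^\perp\neq 0$. For the particular triple $S_0,S_1,S_2$ this may fail (it does precisely when $(S_0+S_1+S_2+S_3)^\perp$ is totally isotropic of dimension $2$), but one has \emph{four} cyclic choices of three consecutive vertices, and the asymmetry of the configuration forces at least one of them to work. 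I expect the bulk of the effort to go into making this last point rigorous over $\GF{q}$: using the classification of pairs of vertices from Section~3 and the counts of Section~4 (Table~\ref{sixCases} and Theorem~\ref{thm:valuesMunji}), I would show that for at least one cyclic triple $S_{i-1},S_i,S_{i+1}$ the complement $(S_{i-1}+S_i+S_{i+1})^\perp$ is non-degenerate, or more generally carries a vertex $W$ with $W+S_{i+2}$ non-degenerate, which is exactly what the reduction step requires.
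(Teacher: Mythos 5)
Your overall skeleton coincides with the paper's: sweep a diagonal $\{S_i,S_{i+2}\}$ through the connected graph $\G((S_i+S_{i+2})^\perp)$ whenever that complement has symplectic part of dimension $\geq 6$, observe that this disposes of every case except $n=5$ with both $S_2\in\E_3(S_0)$ and $S_3\in\E_3(S_1)$ (both diagonal sums degenerate of dimension $4$), and then break that one configuration by inserting an auxiliary vertex. Up to that point the argument is correct and essentially identical to the paper's. The problem is that your treatment of the residual $n=5$ case is not a proof: the entire weight rests on the existence of $W\in\G((S_0+S_1+S_2)^\perp)$ with $W+S_3$ non-degenerate, and you explicitly defer this ("I expect the bulk of the effort to go into making this last point rigorous"), offering only a sufficient condition that you admit can fail and an unsubstantiated appeal to "asymmetry" over the four cyclic triples. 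As written, the lemma is not established.

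The gap is fillable, and in fact more cheaply than you anticipate: in the borderline configuration \emph{every} $W\in\G((S_0+S_1+S_2)^\perp)$ works, so no case analysis over cyclic triples is needed. Writing $M=(S_0+S_1)^\perp$, $R_2=\Rad(S_0+S_2)=\gen{\hat w,\hat u}$ and $R_3=\Rad(S_1+S_3)=\gen{\hat x,\hat y}$, the orthogonality $S_2\perp S_3$ together with $S_0\perp S_1$ forces $R_2\perp R_3$, so $R_3$ lies inside $N:=(S_0+S_1+S_2)^\perp=R_2^\perp\cap M$, whose radical is exactly $R_2$. Since $N/R_2$ is a $2$-dimensional symplectic space, the images of $\hat x,\hat y$ there are proportional, whence $\Psi(\omega,\hat x)\Psi(\nu,\hat y)-\Psi(\omega,\hat y)\Psi(\nu,\hat x)=0$ for any basis $\omega,\nu$ of any $W\leq N$; the Gram determinant of $\omega,\nu,x,y$ is then $1$, so $W+S_3$ is non-degenerate of dimension $4$ and your diagonal $\{W,S_3\}$ can be swept. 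You should supply this computation (or an equivalent one) to close the argument. For comparison, the paper avoids the existence question altogether by an explicit construction: it sets $W'=\gen{w-\hat u,\,u-\hat u}$, checks that $W'\perp S_1$, $W'\perp S_3$, $W'\in\E_2(S_0)$ and $W'\in\E_2(S_2)$, and splits the bad square into the two squares $S_0,S_1,W',S_3$ and $W',S_1,S_2,S_3$, each of which has a diagonal whose sum has dimension $3$ and hence a connected complement. Both routes work; the paper's is self-contained and computation-free, while yours, once the missing claim above is proved, is arguably more conceptual but currently incomplete.
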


\begin{proof}
Suppose that $C$ is the sequence of vertices $S_0,S_1,S_2,S_3,S_4=S_0$ with $S_i \perp S_{i+1}$ for all $0\leq i < 4$.
If $\F( (S_0+S_2)^\perp) $ is connected, we can get a path between $S_1,S_3\in\F( (S_0+S_2)^\perp)$, which triangulates the $4$-gon.
This holds, for example, when the sum $S_0+S_2$ is non-degenerate.
We can proceed analogously for $S_1+S_3$.

Therefore, by Theorem \ref{thm:connected} it remains to analyse the situation where both $S_0+S_2$ and $S_1+S_3$ are degenerate and their orthogonal complements have a non-degenerate part of dimension $ < 6$.
Note that if $S_2\in \E_2(S_0)$, then $\dim(\Rad(S_0+S_2)) = 1$, so 
\[ \dim( (S_0+S_2)^\perp) - \dim(\Rad(S_0+S_2)) = 2n-3-1 \geq 6.\]
Thus in this case, $\F( (S_0+S_2)^\perp)$ is connected (see Theorem \ref{thm:connected}).

Hence $S_0+S_2$ and $S_1+S_3$ are both degenerate of dimension $4$ (that is, $S_2\in \E_3(S_0)$ and $S_3\in\E_3(S_1)$), and the non-degenerate part of their orthogonal complements has dimension at most $4$, which means that
\[4\geq \dim( (S_0+S_2)^\perp)- \dim(\Rad(S_0+S_2)) = (2n-4)-2.\]
This shows that $n = 5$.

% We consider $S:=S_0+S_1+S_2+S_3$, and we prove that $\F_1(S^\perp)\neq\emptyset$.
% That is, we show that $\Rad(S) < S^\perp$.
% Assume otherwise: then $\dim(S)+\dim(\Rad(S)) = 10$.

%In order to simplify notation, we take $\Psi$ to be the canonical form and, by transitivity, we assume without loss of generality that $S_0 = E_1 = \gen{e_1,e_{n+1}}$ and $S_1 = E_2 = \gen{e_2,e_{n+2}}$.
Suppose that $e,f$ and $e',f'$ are symplectic bases of $S_0$ and $S_1$ respectively.
Let $W = S_2 = \gen{w,u}$ and $T = S_3 = \gen{x,y}$, where the bases are taken as in Table \ref{sixCases}.
Since $W\perp S_1$, $T\perp S_0$ $W\not\perp S_0$ and $T\not\perp S_1$, we have:
\[\begin{cases}
w = w_ee + w_ff + \hat{w},\\
u = u_ee + u_ff + \hat{u},\\
x = x_{e'}e' + x_{f'}f' + \hat{x},\\
y = y_{e'}e' + y_{f'}f' + \hat{y},
\end{cases}\]
where the hat-vectors lie in $(S_0+S_1)^\perp$.
Indeed, $\gen{\hat{w}, \hat{u}, \hat{x},\hat{y}} $ is a totally isotropic subspace by the orthogonality relation $T\perp W$ and since $T\in\E_3(S_1)$ and $W\in \E_3(S_0)$ (see Table \ref{sixCases}).
We also have that $\hat{w}=w_{S_0},\hat{u}=u_{S_0}$ are linearly independent.
% Then $S = \gen{e_1,e_2,e_{n+1},e_{n+2},\hat{w},\hat{u},\hat{x},\hat{y}}$.
% Moreover, since $W\in \E_3(E_1)$ and $T\in \E_3(E_2)$, by Table \ref{sixCases} we have that the sets $\{ \hat{w}$, $\hat{u} \}$, $\{\hat{x},\hat{y}\}$, $\{w_1+w_{n+1},u_1 + u_{n+1}\}$, $\{x_2 + x_{n+2}, y_2 + y_{n+2}\}$ are linearly independent.

Take $w'=w-\hat{u}$, $u' = u - \hat{u}$.
Note that $w'_{S_0} = \hat{w}-\hat{u}\neq 0$ by linearly independence, ${u}'_{S_0} = 0$ by definition, and $\Psi(w',u') = \Psi(w,u) = 1$.
Thus $W' = \gen{w',u'}\in\E_2(S_0)$.
Also note that $W'\perp T$ and $W'\perp S_1$.
Then the square $C'=S_0,S_1,W',T,S_0$ is null-homotopic as we saw above.

On the other hand, note that $W+W'=\gen{w,u,\hat{u}}$ has dimension $3$, that is $W'\in \E_2(W)$.
This means that also the square $C''=W',S_1,W,T,W'$ is null-homotopic.
Since $C$ is homotopic to the concatenation of the squares $C'$ and $C''$, we conclude that $C$ is null-homotopic.
\end{proof}

We have proved the following theorem:

\begin{theorem}
\label{thm:simplyConnected}
Let $V$ be a symplectic space of dimension $2n\geq 10$.
Then $\F(V)$ is simply connected.
\end{theorem}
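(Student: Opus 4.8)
The plan is to reduce the statement to the two lemmas just proved. Recall the general principle recorded at the start of the section: since $\F(V)$ is a clique complex, any loop is homotopic to a closed walk in $\G(V)$; because $\G(V)$ has diameter $2$ (Theorem~\ref{thm:connected}, valid here since $n\geq 5\geq 4$), every closed walk decomposes into $r$-gons with $r\leq 2\cdot 2+1=5$; and $3$-gons are automatically filled. Thus simple connectivity of $\F(V)$ is equivalent to every $4$-gon and every $5$-gon being null-homotopic. This is the scaffolding I would state first, citing \cite{PW22} or \cite{ASegev} for the standard reduction.

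Next I would dispatch the $5$-gons. By Lemma~\ref{lm:reduction5gonsDim5orMore}, for $n\geq 5$ every $5$-gon is homotopic in $\F(V)$ to a concatenation of squares (i.e. $4$-gons) and triangles; the triangles are filled, so a $5$-gon is null-homotopic as soon as the resulting $4$-gons are. This is a clean hand-off and needs no new computation.

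Finally I would invoke Lemma~\ref{lm:4gonsDim5orMore}, which asserts that for $n\geq 5$ every $4$-gon in $\G(V)$ is null-homotopic in $\F(V)$. Combining the two reductions, every $r$-gon with $r\in\{3,4,5\}$ is null-homotopic, hence every closed walk in $\G(V)$ is, hence $\pi_1(\F(V))=1$; together with connectivity (Theorem~\ref{thm:connected}(1), applicable since $n\geq 5\geq 3$) this gives that $\F(V)$ is simply connected, which is exactly the claim for $2n\geq 10$.

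There is essentially no obstacle remaining at this level: all the genuine work has already been absorbed into Lemmas~\ref{lm:reduction5gonsDim5orMore} and~\ref{lm:4gonsDim5orMore}, whose proofs carry the dimension bookkeeping (the delicate point being the $n=5$ analysis in the $4$-gon lemma, where one must explicitly re-triangulate a degenerate configuration via an auxiliary subspace $W'\in\E_2(S_0)$). The theorem statement itself is therefore just the assembly of these pieces, and the only thing to verify is that the hypothesis $n\geq 5$ is exactly what both lemmas require and what the diameter-$2$ input demands.
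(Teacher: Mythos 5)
Your proposal is correct and follows exactly the paper's own route: the standard reduction to $r$-gons with $r\leq 5$ via the diameter-$2$ bound is stated at the start of Section 7, and the theorem is then assembled from Lemma \ref{lm:reduction5gonsDim5orMore} and Lemma \ref{lm:4gonsDim5orMore} precisely as you describe. Nothing is missing.
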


\section{Application of Garland's method}

In this section, we apply Garland's method to the frame complex of a symplectic space over a finite field.
%Recall that we have characterised the homotopy type of $\F(V)$ when $n=2,3$ in Theorem \ref{thm:connected}.

Let $\kk$ be any ring.
For $m\geq -1$, a topological space $X$ is $m$-connected over $\kk$ if $\widetilde{H}_k(X,\kk) = 0$ for all $k\leq m$.
A simplicial complex $K$ of dimension $d$ is spherical over $\kk$ if it is $(d-1)$-connected over $\kk$.
In addition, $K$ is Cohen-Macaulay over $\kk$ if for every simplex $\sigma\in K$ (including $\sigma=\emptyset$), the link $\Lk_K(\sigma)$ is spherical over $\kk$ of dimension $d-|\sigma|$.

We recall now Garland's theorem.
See \cite{BS, Garland}.

\begin{theorem}
[Garland]
Let $K$ be a finite simplicial complex of dimension $d$ and let $0\leq i\leq d$.
Assume the following conditions hold for every $(i-1)$-dimensional simplex $\sigma\in K$:
\begin{enumerate}
    \item The link $\Lk_K(\sigma)$ is connected of dimension $\geq 1$.
    \item The smallest non-zero eigenvalue of the normalised Laplacian of the $1$-skeleton of $\Lk_K(\sigma)$ is strictly bigger than $\frac{i}{i+1}$.
\end{enumerate}
Then $\widetilde{H}_i(K,\QQ) = 0$.
\end{theorem}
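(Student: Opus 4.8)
The plan is to prove this as the classical theorem of Garland, via Hodge theory on the weighted cochain complex together with the localization of the combinatorial Laplacian to the links. Since $K$ is finite, $\widetilde H_i(K,\QQ)$ vanishes if and only if $\widetilde H^i(K,\RR)$ does, so I would work with the real cochain spaces $C^k=C^k(K,\RR)$ and the coboundary maps $\delta_k\co C^k\to C^{k+1}$. The first step is to equip each $C^k$ with a weighted inner product $\langle\cdot,\cdot\rangle_k$ in which an oriented $k$-simplex $\tau$ is weighted by (a normalization of) the number of $i$-simplices containing it; with respect to these inner products the $\delta_k$ acquire adjoints $\delta_k^*$ and one forms the Laplacians $\Delta_k=\delta_{k-1}\delta_{k-1}^*+\delta_k^*\delta_k$. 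By the finite-dimensional Hodge decomposition one has $\widetilde H^i(K,\RR)\cong\ker\Delta_i=\ker\delta_i\cap\ker\delta_{i-1}^*$, the harmonic $i$-cochains, so it suffices to show that every harmonic $i$-cochain is zero.

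The core of the argument is localization to links. For each oriented $(i-1)$-simplex $\sigma$ and each $f\in C^i$, I would define $f_\sigma(v):=f(\sigma\join v)$ for $v$ a vertex of $\Lk_K(\sigma)$; this $f_\sigma$ is a $0$-cochain on the graph $\Lk_K(\sigma)^{(1)}$. With the facet-counting weights one obtains the norm decomposition $\|f\|_i^2=\tfrac{1}{i+1}\sum_\sigma\|f_\sigma\|_\sigma^2$, together with Garland's identity: for $f\in\ker\delta_{i-1}^*$,
\[\langle\Delta_i^+f,f\rangle_i=\tfrac{1}{i+1}\sum_\sigma\langle\mathcal L_\sigma f_\sigma,f_\sigma\rangle_\sigma-\tfrac{i}{i+1}\,\|f\|_i^2,\]
where $\Delta_i^+=\delta_i^*\delta_i$ and $\mathcal L_\sigma$ is the normalized graph Laplacian of $\Lk_K(\sigma)^{(1)}$. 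A second, easier identity records that $\delta_{i-1}^*f=0$ is equivalent to the vanishing of the weighted average of $f_\sigma$ over each link, i.e. $f_\sigma\perp\mathbf 1$ in $C^0(\Lk_K(\sigma))$ for every $\sigma$.

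Granting these, the conclusion follows from a spectral estimate. Let $f$ be harmonic. By the second identity each $f_\sigma$ is orthogonal to the constants, and by hypothesis (1) the link graph is connected, so the constants span the entire $0$-eigenspace of $\mathcal L_\sigma$; hence $\langle\mathcal L_\sigma f_\sigma,f_\sigma\rangle_\sigma\ge\lambda(\sigma)\,\|f_\sigma\|_\sigma^2$, with $\lambda(\sigma)$ the smallest non-zero eigenvalue. Feeding this into Garland's identity and using the norm decomposition gives
\[\langle\Delta_i^+f,f\rangle_i\ge\Big(\min_\sigma\lambda(\sigma)-\tfrac{i}{i+1}\Big)\|f\|_i^2,\]
whose coefficient is strictly positive by hypothesis (2). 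But $f$ harmonic forces $\delta_i f=0$, so $\langle\Delta_i^+f,f\rangle_i=\|\delta_i f\|_{i+1}^2=0$; combining the two, $\|f\|_i^2=0$, i.e. $f=0$. Thus $\ker\Delta_i=0$ and $\widetilde H_i(K,\QQ)=0$.

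The genuinely delicate point — and the step I expect to be the main obstacle — is the localization identity, in particular pinning down the constant $\tfrac{i}{i+1}$ that produces the stated threshold. This requires choosing the weights on the $C^k$ so that each $i$-simplex is counted with exactly the right multiplicity by its $i+1$ faces and so that $\mathcal L_\sigma$ appears as written; for a possibly non-pure $K$ one must take the weight of a $k$-simplex proportional to the number of $i$-simplices above it. Hypothesis (1) is precisely what guarantees that the links are honest graphs (non-empty, with edges) whose normalized Laplacian has a simple zero eigenvalue, which is what legitimizes passing from $f_\sigma\perp\mathbf 1$ to the Rayleigh bound $\langle\mathcal L_\sigma f_\sigma,f_\sigma\rangle_\sigma\ge\lambda(\sigma)\|f_\sigma\|_\sigma^2$. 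Everything else is weight bookkeeping.
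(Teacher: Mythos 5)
The paper does not prove this statement: it is quoted as a known theorem with references to Ballmann--\'Swi\c{a}tkowski and Garland, so there is no in-paper argument to compare against. Your outline is precisely the standard proof from those sources --- Hodge theory with facet-counting weights, restriction $f\mapsto f_\sigma$ to links, the localization identity with the $\tfrac{i}{i+1}$ constant, and the Rayleigh bound on the orthogonal complement of the constants --- and the spectral bookkeeping at the end is correct and reproduces the stated threshold. The one caveat is that the localization identity, which you state but explicitly defer, is not a routine verification but the actual content of the cited theorem; as written your argument reduces Garland's theorem to Garland's identity rather than proving it, so to make this self-contained you would still need to carry out the weight computation you describe in your final paragraph.
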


Recall that the normalised Laplacian of a graph $\G$ is $\Id - D^{-1}A$, where $D$ is the diagonal matrix with vertex degrees in the diagonal, and $A$ is the adjacency matrix of $\G$.

\medskip

We apply Garland's method to the frame complex $\F(V)$.
Observe that if $\sigma\in \F(V)$ is an $i$-frame, then its link is naturally isomorphic to $\F(\gen{\sigma}^\perp)$, where $\gen{\sigma}^\perp$ is a symplectic space of dimension $2(n-i)$.
Hence this link is connected if and only if $n-i\geq 3$ by Theorem \ref{thm:connected}.

On the other hand, we have computed the eigenvalues of $\G(V)$, the $1$-skeleton of $\F(V)$, for every symplectic space $V$ over a finite field.
Thus, by Theorem \ref{thm:eigenvalues}, the smallest non-zero eigenvalue of the normalised Laplacian of $\G(V)$ for $n\geq 3$ is
\[ \lambda_{\min}(n,q) := 1 - \frac{q^{3n-6}}{d_n}.\]

So we fix $0\leq i\leq n-3$ (i.e. such that $n-i\geq 3$), and check the condition of Garland's theorem: since the link of every $(i-1)$-simplex is isomorphic to the frame complex of a symplectic space of dimension $2(n-i)$, the condition on the minimal eigenvalue is
\begin{equation}
    \label{eq:garlandCondition1}
    \lambda_{\min}(n-i,q) > \frac{i}{i+1} = 1 - \frac{1}{i+1}.
\end{equation}
The above inequality is equivalent to:
\begin{equation}
    \label{eq:garlandCondition2}
    \frac{d_{n-i}}{q^{3(n-i)-6}} > i+1.
\end{equation}
Let $j := n-i$.
Thus (\ref{eq:garlandCondition2}) is equivalent to:
\begin{equation}
    \label{eq:garlandCondition3}
    P_j(q) := \frac{(q^{2j-2}-1)}{q^{j-2}(q^2-1)} + j - 1 > n.
\end{equation}

We note that the $P_j(q)$ are monotone in $j$.

\begin{lemma}
Let $3\leq j < i\leq n$ and $q\geq 2$.
Then $P_j(q) + (i-j) < P_i(q)$.

Thus, by Garland's method, if $P_j(q) > n$ then $\F(V)$ is $(n-j)$-connected over $\kk$.
\end{lemma}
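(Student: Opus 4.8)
The plan is to treat the lemma as two separate statements: the arithmetic monotonicity inequality $P_j(q)+(i-j)<P_i(q)$, and the homological consequence that $P_j(q)>n$ implies $(n-j)$-connectivity, the latter being obtained by feeding the monotonicity into Garland's theorem degree by degree.

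For the monotonicity, the first step I would take is to simplify $P_j(q)$. Writing $P_j(q)=f(j)+(j-1)$ with $f(j):=\frac{q^{2j-2}-1}{q^{j-2}(q^2-1)}$, one cancels powers of $q$ to reach the clean form $f(j)=\frac{q^{j}-q^{2-j}}{q^2-1}$. The desired inequality $P_j(q)+(i-j)<P_i(q)$ is then equivalent, after cancelling the linear terms $j-1,\,i-1$ against the shift $i-j$, simply to $f(j)<f(i)$; so the entire content of the first assertion is the strict monotonicity of $f$. Since $q\geq 2>1$, the map $j\mapsto q^{j}$ is strictly increasing and $j\mapsto q^{2-j}$ strictly decreasing, so $q^{j}-q^{2-j}$ is strictly increasing, and thus so is $f$. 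Concretely I would record the identity
\[ f(i)-f(j)=\frac{(q^{i}-q^{j})+(q^{2-j}-q^{2-i})}{q^2-1}>0\quad\text{for } i>j,\]
which makes the positivity manifest. This part is entirely elementary.

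For the homological conclusion, the goal is to show that $P_j(q)>n$ forces $\wt{H}_k(\F(V),\QQ)=0$ for every $0\leq k\leq n-j$, which by definition is $(n-j)$-connectivity over $\QQ$ (and hence over any field of characteristic $0$). The strategy is to apply Garland's theorem degree by degree: I fix such a $k$ and invoke the theorem with its index equal to $k$, so the relevant simplices are the $(k-1)$-dimensional ones, that is the $k$-frames $\sigma$, whose links are $\F(\gen{\sigma}^\perp)$ with $\gen{\sigma}^\perp$ symplectic of dimension $2(n-k)$. Two hypotheses must be verified. The connectivity-of-links hypothesis (which also secures dimension $\geq 1$) requires $n-k\geq 3$, and this holds because $k\leq n-j$ and $j\geq 3$ give $n-k\geq j\geq 3$, so Theorem \ref{thm:connected} applies. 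The eigenvalue hypothesis $\lambda_{\min}(n-k,q)>\frac{k}{k+1}$ is, through the chain of equivalences (\ref{eq:garlandCondition1})--(\ref{eq:garlandCondition3}), exactly the statement $P_{n-k}(q)>n$.

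The monotonicity is used precisely at this last point, and this is where I expect the only real subtlety to lie. The eigenvalue condition is needed not at a single value but uniformly over the whole range $k\leq n-j$, i.e.\ $P_m(q)>n$ must hold for every $m=n-k$ with $j\leq m\leq n$. The first assertion supplies $P_m(q)>P_j(q)$ for $m>j$, while the hypothesis gives $P_j(q)>n$ at the boundary value $m=j$; together these yield $P_m(q)>n$ throughout, so Garland's theorem applies in each degree $k\leq n-j$ and delivers $\wt{H}_k(\F(V),\QQ)=0$. I anticipate no genuine obstacle: the only thing demanding care is the bookkeeping of the three indices in play — the size $k$ of a frame, the corresponding simplex dimension $k-1$, and the homological degree $k$ — together with the observation that the minimal-eigenvalue bound is required for all $m\geq j$ rather than at one point, which is exactly what the monotonicity provides.
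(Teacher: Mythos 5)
Your proof is correct and follows essentially the same route as the paper: a direct computation showing $P_i(q)-P_j(q)-(i-j)>0$ (your rewriting $P_j(q)=\frac{q^{j}-q^{2-j}}{q^2-1}+j-1$ is just a cleaner organization of the same algebra, which the paper instead handles by factoring the numerator), followed by feeding the resulting uniform bound $P_m(q)>n$ for all $m\geq j$ into Garland's theorem degree by degree. Your second half spells out the index bookkeeping that the paper leaves implicit, but the argument is the same.
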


\begin{proof}
We compute:
\begin{align*}
    P_i(q) - P_j(q) - (i-j) & = \frac{(q^{2i-2}-1)q^{j-2} - (q^{2j-2}-1)q^{i-2}}{q^{j+i-4}(q^2-1)} =  \frac{ (q^{j+i-2}+1)( q^{i-2}- q^{j-2})}{q^{j+i-4}(q^2-1)}.
\end{align*}
This expression is positive since $q^{i-2}>q^{j-2}$.

The In particular part follows by Eq. (\ref{eq:garlandCondition3}) and the application of Garland's method.
\end{proof}

We compute $P_j(q)$ for some values of $3\leq j\leq n$:
\begin{itemize}
\item If $j = 3$, then $P_3(q) = q+2+q^{-1}$. So $n < P_3(q)$ if and only if $n < q+3$.
\item If $j = 4$, then $P_4(q) = q^2+4+q^{-2}$.
Thus $n < P_4(q)$ if and only if $n < q^2+4$.
\item Write $n = 2n'+\epsilon$, with $\epsilon\in\{0,1\}$.
%Suppose further that $n'> 4$ if $q = 2$, and $n' \geq 4$ if $q > 3$. 
Let $j := n'+\epsilon$, and consider the expression:
\[ P_j(q) - n = \frac{q^{2n'+2(\epsilon-1)}-1}{q^{n'+\epsilon-2}(q^2-1)} - n' - 1.\]
From basic analysis, we see that $P_j(q) > n $ if and only if the bounds displayed in Table \ref{tab:halfnconnected} hold.
Thus, $\F(V)$ is $n-j=[n/2]$-connected over a field of characteristic $0$ if one of the following holds:
\begin{enumerate}
    \item $q = 2$ and $n\geq 7$,
    \item $q = 3$ and $n\geq 5$, $n\neq 6$,
    \item $q \geq 4$ and $n\geq 5$.
\end{enumerate}
\begin{table}[ht]
    \centering
    \begin{tabular}{|c|c|c|}
    \hline
          & $\epsilon = 0$ & $\epsilon = 1$ \\
    \hline
    $q = 2$ & $n'\geq 4$ & $n'\geq 3$\\
    $q = 3$ & $n'\geq 4$ & $n'\geq 2$\\
    $q\geq 4$ & $n'\geq 3$ & $n'\geq 2$\\
    \hline
    \end{tabular}
    \caption{Positivity of $P_{n'+\epsilon}(q)-n$ in terms of $q$, $n'$ and $\epsilon$.}
    \label{tab:halfnconnected}
\end{table}
\end{itemize}

Since an interval of the poset $\redF(V)$ is either a wedge of spheres or isomorphic to $\redF(\gen{\sigma}^\perp)$ for some frame $\sigma$, we can conclude Cohen-Macaulayness over a field of characteristic $0$ if $n<q+3$.
%From these computations, we conclude:
We summarise these conclusions in the following corollary.

\begin{corollary}
\label{coro:garland}
Let $V$ be a symplectic space of dimension $2n$, $n\geq 4$, over a finite field $\GF{q}$, and $\kk$ a field of characteristic $0$.
\begin{enumerate}
    \item If $n < q + 3$ then $\redF(V)$ is Cohen-Macaulay over $\kk$.
    \item If $n < q^2+4$ then $\redF(V)$ is $(n-4)$-connected over $\kk$.
    \item If $n\geq 7$ for $q=2,3$, or $n\geq 5$ for $q\geq 4$, then $\F(V)$ is $[n/2]$-connected over $\kk$.
\end{enumerate}
\end{corollary}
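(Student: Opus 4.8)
The plan is to assemble items~(1)--(3) from the machinery already in place: Garland's theorem, the eigenvalue computation of Theorem~\ref{thm:eigenvalues}, the connectivity dichotomy of Theorem~\ref{thm:connected}, and the monotonicity of the quantities $P_j(q)$ together with their explicit values and the bounds of Table~\ref{tab:halfnconnected}. Items~(2) and~(3) are purely connectivity statements and fall out of Garland's method directly, whereas item~(1) is a Cohen--Macaulay statement and needs the additional control of every link of $\redF(V)$, which I would obtain by an induction on the dimension.

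For item~(2) I would note that $P_4(q)=q^2+4+q^{-2}>q^2+4$, so $n<q^2+4$ forces $P_4(q)>n$, i.e. condition~\eqref{eq:garlandCondition3} with $j=4$; Garland's method then gives that $\F(V)$ is $(n-4)$-connected over $\kk$, and hence so is $\redF(V)$ since $\F(V)\simeq\redF(V)$. For item~(3) I would set $n=2n'+\epsilon$ with $\epsilon\in\{0,1\}$ and $j=n'+\epsilon$, so that $n-j=[n/2]$; the inequality $P_{n'+\epsilon}(q)>n$ holds precisely under the bounds recorded in Table~\ref{tab:halfnconnected}, and the hypotheses $n\geq 7$ for $q\in\{2,3\}$ and $n\geq 5$ for $q\geq 4$ are convenient sufficient conditions implying those bounds, so Garland's method yields $[n/2]$-connectivity. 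The only routine work here is the elementary sign analysis of $P_{n'+\epsilon}(q)-n$ underlying the table.

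For item~(1) I would argue by induction on $n$, the smallest cases being immediate from the connectivity statement. The hypothesis $n<q+3$ is equivalent to $P_3(q)>n$, so by the monotonicity of $P_j$ we get $P_j(q)>n$ for all $j\geq 3$; in particular $\F(V)$ is $(n-3)$-connected, and therefore $\redF(V)$, of dimension $n-2$, is spherical over $\kk$. It then remains to verify that every proper link is spherical of the correct dimension. Since the link of a chain in the order complex is a join of intervals of $\redF(V)$, and since each such interval is either a wedge of spheres or isomorphic to $\redF(\gen{\sigma}^\perp)$ for some frame $\sigma$, it suffices to treat these two types: a wedge of spheres is spherical for free, while $\redF(\gen{\sigma}^\perp)$ is the frame poset of a symplectic space of dimension $2(n-|\sigma|)$ with $n-|\sigma|\leq n<q+3$, so the inductive hypothesis applies and gives sphericity. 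As a join of spherical complexes is spherical, every link has the required property and $\redF(V)$ is Cohen--Macaulay over $\kk$.

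The main obstacle is item~(1), and specifically the bookkeeping for the links once the rank $n-1$ has been deleted from the frame poset. One has to confirm that the dimension bound $n<q+3$ really is inherited by every subspace $\gen{\sigma}^\perp$ arising as a link, and that the sphericity dimensions line up with the grading of $\redF(V)$; in particular the intervals straddling the missing rank are exactly those forced to be wedges of spheres, and one must check that these are wedges of \emph{equidimensional} (top-dimensional) spheres, so that purity and the Cohen--Macaulay dimension condition are not violated. Everything else amounts to substituting the explicit values of $P_j(q)$ and invoking the already-established monotonicity and Garland's criterion.
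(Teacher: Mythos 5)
Your proposal is correct and follows essentially the same route as the paper: items (2) and (3) are read off from the values $P_4(q)=q^2+4+q^{-2}$ and $P_{n'+\epsilon}(q)$ together with the monotonicity lemma and Garland's criterion, and item (1) combines $P_3(q)=q+2+q^{-1}>n$ with the observation that every interval of $\redF(V)$ is either a wedge of spheres or a copy of $\redF(\gen{\sigma}^\perp)$ for a smaller symplectic space still satisfying $n-|\sigma|<q+3$. Your extra care about the intervals straddling the deleted rank $n-1$ (checking they are wedges of top-dimensional spheres) is a worthwhile elaboration of a point the paper states without proof, but it does not change the argument.
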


\section{Final remarks}

In general, we can show that the poset $\redF(V)$ is not Cohen-Macaulay over any ring $\kk$.
This is the case if the dimension $2n$ is big enough since there some links will not be spherical.
Concretely, for a given $n_0$, we can prove that $\widetilde{H}_{n_0-3}(\F(V),\QQ)\neq 0$, where $V$ is a symplectic space of dimension $2n_0$ over $\GF{q}$.
Then, for an arbitrary $V$ of dimension $2n\geq 2n_0$, we take a frame $\sigma\in\redF(V)$ such that $|\sigma|=n-n_0$.
The link $\redF(V)_{>\sigma}$ is isomorphic to $\redF(\gen{\sigma}^\perp)$ and $\gen{\sigma}^\perp$ is a symplectic space of dimension $2n_0$, so $\widetilde{H}_{n_0-3}(\redF(V)_{>\sigma},\kk)\neq 0$, that is, $\redF(V)_{>\sigma}$ is not spherical over $\kk$.

\begin{proposition}
Let $V$ be a symplectic space of dimension $2n$ over a finite field $\GF{q}$.
\begin{enumerate}
    \item If $n > q^2(q^2+1) + n(n-2)q^{-4}(q^4+q^2+1)^{-1}$ then $\widetilde{H}_{n-3}(\F(V),\QQ)\neq 0$.
    \item If $n \geq m$, where $m$ satisfies the bound of item (1), $\F(V)$ is not Cohen-Macaulay over any ring.
\end{enumerate}
\end{proposition}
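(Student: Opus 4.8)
The plan is to deduce item (2) from item (1) by localising to vertex links, and to prove item (1) from the reduced Euler characteristic of $\F(V)$ recorded in Corollary~\ref{eulerCharFramesSymplectic}.

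For the reduction, assume item (1) holds at dimension $2m$, so a symplectic space $V_0$ of dimension $2m$ has $\widetilde H_{m-3}(\F(V_0),\QQ)\neq 0$; as $\F(V_0)\homotequiv\redF(V_0)$ and $\dim\redF(V_0)=m-2$, this says $\redF(V_0)$ is not spherical over $\QQ$. Given $V$ of dimension $2n\geq 2m$, I would pick a frame $\sigma$ with $|\sigma|=n-m$; when $n=m$ this is item (1) itself, so assume $n>m$, and note $1\leq n-m\leq n-2$ is a legal size in $\redF(V)$. The link of the vertex $\sigma$ in the order complex is the join $\Delta(\redF(V)_{<\sigma})\join\Delta(\redF(V)_{>\sigma})$, where $\redF(V)_{>\sigma}\groupiso\redF(\gen{\sigma}^\perp)\groupiso\redF(V_0)$ while $\redF(V)_{<\sigma}$ is the poset of proper nonempty subframes of $\sigma$, whose order complex is homeomorphic to $S^{n-m-2}$. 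Since the sphere has free homology, the K\"unneth formula for joins gives, over any ring $\kk$, a summand $\widetilde H_{n-m-2}(S^{n-m-2};\kk)\otimes\widetilde H_{m-3}(\redF(V_0);\kk)$ inside $\widetilde H_{n-4}(\Lk(\sigma);\kk)$; and $\widetilde H_{m-3}(\redF(V_0);\QQ)\neq 0$ forces a free summand in the integral homology, so this tensor product is nonzero for every $\kk$. As $\dim\Lk(\sigma)=(n-m-2)+(m-2)+1=n-3$, the link is not spherical, and $\redF(V)$ fails the Cohen--Macaulay condition at $\sigma$ over every ring, which is item (2).

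For item (1) I would examine the sign of $(-1)^{n+1}\tilde\chi(\F(V))$, where $\tilde\chi(\F(V))=\sum_{m=0}^n(-1)^{m+1}T_m$ and $T_m:=|\F_m(V)|$. From the stabiliser formula \eqref{eq:stabilizer} the consecutive ratios are $T_{n-1}/T_{n-2}=d_3/(n-1)$ and $T_{n-2}/T_{n-3}=d_4/(n-2)$, with $d_3=q^2(q^2+1)$ and $d_4=q^4(q^4+q^2+1)$; more generally $T_m/T_{m-1}=|\G(\gen{\sigma}^\perp)|/m$ for an $(m-1)$-frame $\sigma$, which exceeds $1$ for all $m\leq n-2$ in the relevant range. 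Hence $T_m$ increases up to the peak $T_{n-2}$ and then decreases, and $T_{n-2}$ enters $(-1)^{n+1}\tilde\chi$ with positive sign. Estimating the leading block $T_n-T_{n-1}+T_{n-2}-T_{n-3}$ against the geometrically decreasing tail $\sum_{k\geq 4}(-1)^kT_{n-k}$, one sees that $(-1)^{n+1}\tilde\chi(\F(V))>0$ exactly when $1>\tfrac{d_3}{n}+\tfrac{n-2}{d_4}$, i.e. precisely under the hypothesis $n>d_3+n(n-2)d_4^{-1}$. On the other hand, were $\redF(V)$ spherical of dimension $n-2$ we would have $\tilde\chi=(-1)^{n-2}\dim\widetilde H_{n-2}$ and hence $(-1)^{n+1}\tilde\chi\leq 0$; so under the stated bound $\redF(V)$ is not spherical.

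To sharpen non-sphericality into the stated $\widetilde H_{n-3}(\F(V),\QQ)\neq 0$, I would expand $(-1)^{n+1}\tilde\chi=-\beta_{n-2}+\beta_{n-3}-\beta_{n-4}+\cdots$ with $\beta_i:=\dim_\QQ\widetilde H_i$, so that positivity yields $\beta_{n-3}+\beta_{n-5}+\cdots>\beta_{n-2}+\beta_{n-4}+\cdots$, and then invoke Garland's vanishing from Corollary~\ref{coro:garland} to discard the low-degree Betti numbers and the surviving contributions of the wrong parity. The hard part will be this final localisation of the nonzero homology to degree exactly $n-3$: in the window $d_3<n<d_4$ Garland does not reach degree $n-4$ (that would need $n<q^2+4$), so a handful of Betti numbers just below the top dimension remain a priori unknown and the Euler characteristic only controls a parity class. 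Turning the dominant-term heuristic into a rigorous inequality at the precise threshold, and excluding that the nonvanishing hides in $\beta_{n-5},\beta_{n-7},\dots$, is where the genuine work lies; I expect this to require either an explicit $(n-3)$-cycle or finer $\Sp_{2n}(q)$-equivariant control of the top-degree homology.
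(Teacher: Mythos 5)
Your reduction of item (2) to item (1) is correct and is essentially the paper's argument: the link of a frame $\sigma$ of size $n-m$ decomposes with upper part $\redF(V)_{>\sigma}\cong\redF(\gen{\sigma}^\perp)$, a copy of $\redF(V_0)$ carrying non-spherical homology, and your universal-coefficients remark (a nonzero $\QQ$-Betti number forces a free integral summand, hence nonvanishing over every ring) is exactly the point needed to pass from $\QQ$ to arbitrary $\kk$.

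Item (1), however, has the gap you yourself flag at the end, and it is fatal to the argument as written: from $(-1)^{n+1}\tilde\chi(\F(V))>0$ you only get $\beta_{n-3}+\beta_{n-5}+\cdots>\beta_{n-2}+\beta_{n-4}+\cdots$, which shows that \emph{some} Betti number of the right parity is nonzero, not that $\beta_{n-3}\neq 0$; and Garland's method does not reach degree $n-4$ in the regime $q^2(q^2+1)<n$, so the lower-degree terms cannot be discarded. (Your tail estimate for the full Euler characteristic is also unjustified as stated, but that is secondary.) The paper avoids both problems by never looking at the full Euler characteristic: it works only with the top four chain groups. Since $\F(V)$ has dimension $n-1$ and $\widetilde{H}_{n-1}(\F(V),\ZZ)=0$ (the complex collapses onto an $(n-2)$-dimensional subcomplex), one has $\dim\ker\partial_{n-3}\geq f_{n-2}-f_{n-3}$ and $\dim\mathrm{im}\,\partial_{n-2}\leq f_{n-1}-f_n$, whence $\dim_\QQ\widetilde{H}_{n-3}(\F(V),\QQ)\geq f_n-f_{n-1}+f_{n-2}-f_{n-3}$ with $f_m=|\F_m(V)|$. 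The consecutive ratios you already computed, $f_n/f_{n-1}=1/n$, $f_{n-1}/f_{n-2}=d_3/(n-1)$ and $f_{n-2}/f_{n-3}=d_4/(n-2)$, show that $f_n+f_{n-2}>f_{n-1}+f_{n-3}$ is exactly the stated bound $n>q^2(q^2+1)+n(n-2)q^{-4}(q^4+q^2+1)^{-1}$, so this truncated rank count gives the localisation to degree $n-3$ for free; no explicit cycle or equivariant analysis is needed.
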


\begin{proof}
Item (1) follows from a basic counting on the dimensions of the chain groups.
Namely, let $f_m = |\F_m(V)|$, which represents the dimension of the $(m-1)$-th chain group of $\F(V)$ over $\QQ$.
These values can be computed from Eq. (\ref{eq:stabilizer}).
Then, if the bound
\begin{equation}
\label{eq:nonzeroHnminus3}
f_n+f_{n-2} > f_{n-1}+f_{n-3}   
\end{equation}
holds, $\widetilde{H}_{n-3}(\F(V),\QQ)\neq 0$.
Now it is straightforward to verify that Eq. (\ref{eq:nonzeroHnminus3}) holds if and only if the bound of item (1) holds.

Item (2) follows from the discussion previous to the statement of this proposition.
\end{proof}

\begin{remark}
For a symplectic space $V$ of dimension $8$, that is $n=4$, we have not described the fundamental group of $\F(V)$.
Note that if the underlying field is finite of size $q$, then $P_3(q)>4$, so by Garland's theorem $\F(V)$ is $1$-connected over a field of characteristic $0$.
That is, $\widetilde{H}_1(\F(V),\ZZ)$ is a finite abelian group.
This might suggest that $\F(V)$ is simply connected.
Moreover, by the results of \cite{Zuk}, $\pi_1(\F(V))$ has Kazhdan's property (T).
\end{remark}

% \begin{remark}
% Recall that the poset of proper non-degenerate subspaces of a symplectic space $V$ of dimension $2n$ over a finite field of size $q>2$ is homotopy Cohen-Macaulay by \cite{Das}.
% By \cite[Prop. 2.13]{PW22} and Corollary \ref{coro:garland}, for $q = 2$, we can conclude that the poset of proper non-degenerate subspaces of $V$ is Cohen-Macaulay over a field of characteristic $0$ if in addition $n<5$.
% It would be interesting to see if the Cohen-Macaulay property on this poset can be extended to every $n$ if $q = 2$.
% \end{remark}

Finally, we wonder if the results on Cohen-Macaulayness can be extended to the homotopy context.
That is, if $n < q+3$ implies that $\redF(V)$ is homotopy Cohen-Macaulay.

\end{document}